\def\thm@space@setup{%
  \thm@preskip=\parskip \thm@postskip=0pt
}
\newtheorem{thm}{Theorem}[section]
\newtheorem*{thm*}{Theorem}
\newtheorem{cor}[thm]{Corollary}
\newtheorem*{cor*}{Corollary}
\newtheorem{prop}[thm]{Proposition}
\newtheorem{lem}[thm]{Lemma}
\theoremstyle{definition}
\newtheorem{defn}[thm]{Definition}
\theoremstyle{remark}
\newtheorem*{ack*}{Acknowledgements}
\newcommand{\Z}{\mathbb{Z}}
\newenvironment{bsmallmatrix}
  {\left[\begin{smallmatrix}}
  {\end{smallmatrix}\right]}
\title{Conic intersections, Maximal Cohen-Macaulay modules and the Four Subspace problem}
\author{Vincent G\'elinas \thanks{The author is affiliated with the University of Toronto, and is supported by an NSERC CGS-D Grant.}}
\renewcommand\footnotemark{}
\begin{document}

\maketitle

\begin{abstract}
\noindent Let $X$ be a set of $4$ generic points in $\mathbb{P}^2$ with homogeneous coordinate ring $R$. We classify indecomposable graded MCM modules over $R$ by reducing the classification to the Four Subspace problem solved by Nazarova and Gel$'$fand-Ponomarev, or equivalently to the representation theory of the $\widetilde{D}_4$ quiver. In particular, the $\mathbb{P}^1$ tubular family of regular representations corresponds to matrix factorizations of the pencil of conics going through $X$, with smooth conics $Q_{t}$ corresponding to rank one tubes and the singular conics $Q_0, Q_1, Q_{\infty}$ giving the remaining rank two tubes. As applications we determine the Ulrich modules over $R$ and we identify the preprojective algebra of type $\widetilde{D}_4$ as the diagonal part of the Yoneda algebra of a Koszul $R$-module.
\end{abstract}

\tableofcontents

%


\section{Introduction}

There is a long tradition in algebraic geometry and commutative algebra of reducing classification problems to `matrix problems', to then be attacked by the methods of the representation theory of algebras. A striking example of this is the classification of indecomposable stable graded modules over the exterior algebra $\Lambda(dx, dy)$ on two generators  by Kronecker's theory of matrix pencils. The aim of this paper is to present a new such classification, by reducing an elementary problem of commutative algebra to the Nazarova-Gelfand-Ponomarev classification of four subspaces configurations.

More specifically, we solve the following problem. Let $X$ be a set of four distinct points in $\mathbb{P}_k^2$ with no three collinear and $k$ an algebraically closed field, and let $R = k[X]$ be the homogeneous coordinate ring. Let $Q$ be a $\widetilde{D}_4$ quiver and $A = kQ$ its path algebra. We reduce the classification of stable graded Maximal Cohen-Macaulay (MCM) modules over $R$ to the classification of quiver representations over $Q$.

\begin{thm}\label{E}
There is an equivalence of triangulated categories ${\rm E}: \underline{\rm MCM}(gr R)^{op} \xrightarrow{\cong} {\rm D}^{b}(kQ)$.
\end{thm}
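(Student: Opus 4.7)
The plan is to invoke the classical tilting-theoretic route: construct a tilting object $T$ in $\underline{\rm MCM}(gr R)$ whose endomorphism algebra is the path algebra $kQ$ of an orientation of $\widetilde{D}_4$. Since $kQ$ is hereditary of finite global dimension, once such a $T$ satisfies $\operatorname{End}(T) \cong kQ$, $\operatorname{Ext}^{>0}(T,T) = 0$, and $T$ classically generates the stable category, Keller's enhancement of Rickard's theorem yields the desired equivalence ${\rm E} = {\rm R}\underline{\operatorname{Hom}}(-, T) : \underline{\rm MCM}(gr R)^{op} \xrightarrow{\cong} {\rm D}^{b}(kQ)$, under which $T$ corresponds to $kQ$ viewed as a free module over itself.

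I would take $T = \bigoplus_{i=0}^{4} T_i$ indexed by the five vertices of $\widetilde{D}_4$: four ``outer'' summands $T_1,\ldots,T_4$ attached to the four points $p_i \in X$ (natural candidates are first syzygies or MCM approximations of the graded residue fields $R/\mathfrak{p}_i$), and one ``central'' summand $T_0$ linking them (for instance the MCM approximation of $k = R/R_{>0}$, or a suitable shift of the canonical module). The computation of $\operatorname{Ext}^*(T,T)$ then reduces to a finite collection of Hom- and Ext-space computations between the $T_i$, which can be carried out from their minimal graded projective resolutions via Tate cohomology; one expects the nonzero Homs to realize precisely the arrows of $\widetilde{D}_4$ and all higher Tate Ext to vanish by degree.

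The principal obstacle is showing that $T$ classically generates $\underline{\rm MCM}(gr R)$. I see two natural routes. The direct route uses that $R = k[x,y,z]/(Q_0, Q_\infty)$ is a graded codimension-two complete intersection of quadrics: fix a conic $Q_t$ in the pencil and decompose an arbitrary indecomposable graded MCM module against the matrix factorization of $Q_t$, exploiting the tubular family structure previewed in the abstract to reduce to the $T_i$ via iterated triangles. The geometric route instead first identifies $\underline{\rm MCM}(gr R)^{op}$ with ${\rm D}^{b}({\rm coh}\,\mathbb{X})$ where $\mathbb{X}$ is the weighted projective line of tubular type $(2,2,2,2)$ whose weighted points encode the degenerate members of the pencil of conics (by invoking Orlov's or Buchweitz's theorem for a graded Gorenstein ring of Krull dimension one and Gorenstein parameter one), and then applies the Geigle--Lenzing tilting theorem, which is known to produce a derived equivalence between ${\rm coh}\,\mathbb{X}$ and $kQ$ for $Q$ of type $\widetilde{D}_4$. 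In either route, the heart of the argument lies in matching the $\widetilde{D}_4$-combinatorics with the geometry of the four points (or equivalently of the singular conics in the pencil they determine), and in tracking how the Serre/AR-translate on ${\rm D}^{b}(kQ)$ corresponds to the degree-shift and syzygy functors on $\underline{\rm MCM}(gr R)$.
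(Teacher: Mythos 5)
Your overall strategy is the paper's: the tilting object is essentially the one you propose, $T=\left(\bigoplus_{i=1}^{4}L_i\right)\oplus k^{st}$ with $L_i=R/I(p_i)$ (note these point modules have depth one and are already MCM, so no approximation or syzygy is needed on the outer summands), and the equivalence is obtained from Keller's tilting theorem applied to the algebraic, idempotent-closed category $\underline{\rm MCM}(gr R)$, in the contravariant form ${\rm E}=\Theta\circ{\rm R\underline{Hom}}(-,T)$. The Ext computations you defer are indeed routine (the paper reduces them to ${\rm dim}\,{\rm Tor}^R_n(L_i,k)_0$, using that the resolutions involved are linear). The genuine gap is generation, and neither of your two routes closes it. The direct route is circular as stated: the tubular structure of the complexity-one modules is a \emph{consequence} of the classification, not an available input, and decomposing an arbitrary indecomposable against a matrix factorization of some $Q_t$ gives no mechanism forcing iterated triangles to terminate at the summands of $T$. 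The geometric route is wrong as stated: for this $R$ (Krull dimension one, $a=1>0$) Orlov's theorem does not identify $\underline{\rm MCM}(gr R)^{op}$ with ${\rm D}^b$ of a weighted projective line; it produces a semiorthogonal decomposition of $\underline{\rm MCM}(gr R)$ itself, with one exceptional object and a copy of ${\rm D}^b(X)$ for $X$ the four honest points, namely $\underline{\rm MCM}(gr R)=\big\langle k^{st}(1),L_1,L_2,L_3,L_4\big\rangle$. Moreover the weight type $(2,2,2,2)$ cannot occur here: it is tubular, and ${\rm D}^b({\rm coh}\,\mathbb{X}(2,2,2,2))$ is not equivalent to ${\rm D}^b(kQ)$ for any quiver (it has $\mathbb{P}^1$-families of tubes at every slope, while a hereditary algebra has one such family up to shift); the weight type matching $\widetilde{D}_4$ would be $(2,2,2)$, reflecting the \emph{three} singular conics rather than the four points, so invoking Geigle--Lenzing in the form you describe would fail.

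To repair the argument along the paper's lines: apply Orlov's theorem with $Z=X=\{p_1,\dots,p_4\}$ and $i=-1$, using ${\rm R}\Gamma_{\geq 0}(X,\mathcal{O}_{p_i})=L_i$, to get the full exceptional collection $\big\langle k^{st}(1),L_1,L_2,L_3,L_4\big\rangle$; this settles generation at once. Then perform one mutation (helix theory), moving $k^{st}(1)$ to the rightmost position, where it becomes $k^{st}(1)\otimes_R\omega_R^{-1}=k^{st}$ since $\omega_R=R(1)$; the resulting collection $\langle L_1,L_2,L_3,L_4,k^{st}\rangle$ is full and strong, the only nonzero cross terms are ${\rm Hom}_{gr R}(L_i,k)\cong k$, and hence $\underline{\rm End}_{gr R}(T)\cong kQ$ for the $\widetilde{D}_4$ quiver with central sink. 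No weighted projective line is needed, and the Serre functor $(1)$ enters only through this mutation and through the later identification $\tau=(1)[-1]$.
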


Theorems of this type have been heavily studied in the last decade \cite{MR3063907}, \cite{MR2313537}, \cite{MR3498794}, \cite{MR3107527}, where such equivalences follow from the standard methods of tilting theory. One drawback to the method is that the inverse to ${\rm E}$ is formally given and thus difficult to calculate with; for instance ${\rm E}^{-1}$ typically produces large non-minimal presentations of indecomposables which may have a more economical description.


The main contribution of this paper is to give, in characteristic different from $2$, a complete, explicit list of stable indecomposable graded MCM modules over $R$, which correspond under ${\rm E}$ to the indecomposable complexes of representations of $Q$. The classification is of a rather compelling geometric nature, relying on the study of the pencil of quadrics through the points $X$ in $\mathbb{P}^2$, and can be stated without reference to quiver representations. 

Recall that a module has complexity $c$ if its minimal resolution has polynomial growth asymptotic to $Cn^{c-1}$ for some $C > 0$. We write $N^{st}$ for the MCM approximation of an $R$-module $N$.
\begin{thm}\label{maintheorem} Assume that char $k \neq 2$. Then up to degree shift, the indecomposable graded MCM modules $M$ are classified as follows:

\begin{itemize}
\item[] \underline{Complexity $2$}: $M$ is a (co)syzygy of either $k^{st}$ or $L_i$, where $L_i = \Gamma_{\geq 0}(X, \mathcal{O}_{p_i}) = k[X]/I(p_i)$ is the point module corresponding to $p_i \in X \subset \mathbb{P}^2$.
\item[] \underline{Complexity $1$}: $M$ is given by a matrix factorization of some quadratic polynomial $Q_t$ coming from the unique pencil of conics $\{ V(Q_t) \}_{t \in \mathbb{P}^1}$ passing through $X$ in $\mathbb{P}^2$, as listed below. The modules $N_{t}\langle r \rangle$ correspond to matrix factorizations of non-degenerate conics while $D_{t}\langle r \rangle^{+}$, $D_{t} \langle r \rangle^{-}$ come from degenerate conics.\\

\centerline{
\begin{tabular}{|c|c|c|}
  \hline
   \multicolumn{3}{|c|}{Indecomposable graded MCM modules (up to degree shift)}\\
   \hline
 Complexity $2$ & \multicolumn{2}{c|}{Complexity $1$}\\
 \hline
 & $t \in \mathbb{P}^1 \setminus \{0, 1, \infty \}$ & $t = 0, 1, \infty$ \\
 \cline{2-3}
 & & \\
 $syz_R^{n}\big(k^{st} \big)$, $n \in \Z$ & $N_{t}\langle r \rangle$, $r \geq 1$ & $D_{t}\langle r \rangle^{+}$, $r \geq 1$ \\
$syz_R^{n}\big(L_i \big)$, $n \in \Z$ &  & $D_{t}\langle r \rangle^{-}$, $r \geq 1$ \\
& & \\
 \hline
\end{tabular}}

The modules $N_{t}\langle r \rangle$ have $1$-periodic minimal resolutions while the minimal resolutions of $D_{t}\langle r \rangle^{\pm}$ have period $2$.
\end{itemize}

\end{thm}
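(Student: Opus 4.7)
The plan is to apply Theorem \ref{E} to transport the classification from $\underline{\rm MCM}({\rm gr}\, R)$ to ${\rm D}^b(kQ)$, and then invoke the Nazarova-Gel$'$fand-Ponomarev classification of indecomposable $\widetilde{D}_4$-representations. Since $kQ$ is hereditary, every indecomposable in ${\rm D}^b(kQ)$ is a shift of an indecomposable $kQ$-module, and these fall into three families: preprojectives (5 $\tau^{-1}$-orbits, one per vertex of $\widetilde{D}_4$), preinjectives (5 $\tau$-orbits), and regulars, forming a tubular $\mathbb{P}^1$-family with three tubes of rank $2$ and rank $1$ tubes elsewhere. Under ${\rm E}$, the shift $[1]$ corresponds to a syzygy operation on the MCM side (up to internal degree shift), so classifying indecomposables in $\underline{\rm MCM}({\rm gr}\, R)$ up to internal degree shift reduces to classifying indecomposables in ${\rm D}^b(kQ)$.

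Next I would match the derived trichotomy with the complexity stratification of $\underline{\rm MCM}({\rm gr}\, R)$. Under ${\rm E}$ the AR-translate $\tau$ transports to an autoequivalence of $\underline{\rm MCM}$ whose finite orbits detect eventually periodic minimal resolutions, so the regular ($\tau$-periodic) indecomposables correspond to complexity $1$ modules, while the preprojective and preinjective indecomposables, whose $\tau$-orbits are infinite, correspond to complexity $2$ modules. Up to internal degree shift and (co)syzygy, the 5 preprojective and 5 preinjective $\tau$-orbits collapse into 5 orbits, which I would identify with the syzygy orbits of $\{k^{st}, L_1^{st}, L_2^{st}, L_3^{st}, L_4^{st}\}$: the central vertex of $\widetilde{D}_4$ matches $k^{st}$, and the four leaves match the point modules $L_i$ (forced up to relabelling by the $S_4$-symmetry permuting the four points of $X$).

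For the complexity $1$ modules I would construct explicit matrix factorizations of the pencil of conics through $X$. Writing $R = k[x,y,z]/(f_1, f_2)$ for any two conics spanning the pencil, each $Q_t = (1-t) f_1 + t f_2$ vanishes in $R$, and (using ${\rm char}\, k \neq 2$ to diagonalize) the symmetric matrix of $Q_t$ together with a Koszul-type syzygy of $(f_1, f_2)$ yields a matrix factorization descending to an MCM $R$-module with periodic resolution. When $Q_t$ is smooth the factorization is $1$-periodic and produces a rank $1$ tube of modules $N_t\langle r \rangle$, $r \geq 1$. When $Q_t$ is one of the three singular conics (a union $\ell_1 \ell_2$ of two lines, indexed by the three pairings of the four points of $X$), each line contributes a family of $2$-periodic modules, producing the two branches $D_t\langle r \rangle^\pm$ at the mouth of a rank $2$ tube.

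The central technical obstacle is the explicit matching of the geometric $\mathbb{P}^1$ of conics with the tubular $\mathbb{P}^1$. One must verify that the simple regular module at the mouth of each tube is supported on (or annihilated by) exactly one conic $V(Q_t)$, that smooth conics yield rank $1$ tubes while the three singular conics yield the three rank $2$ tubes, and that the two branches of each rank $2$ tube correspond to the two line components. Combined with the complexity $2$ identification, this matches every row of the table with an indecomposable on the quiver side and exhausts the Nazarova-Gel$'$fand-Ponomarev list, so Theorem \ref{maintheorem} follows from Theorem \ref{E}.
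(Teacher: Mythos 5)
Your overall strategy is the same as the paper's: transport the problem through the equivalence of Theorem \ref{E}, identify the preprojective/preinjective part with the complexity-two modules and the tubular family with complexity-one modules given by matrix factorizations of the pencil. But the step you defer as ``the central technical obstacle'' is precisely the mathematical core of the paper's proof, and you never supply an argument for it. The paper proves (Theorem \ref{maincalc}) that ${\rm E}(N_t) \cong R_t$ for all $t \in \mathbb{P}^1$ and ${\rm E}(D_t^{\pm}) \cong S_t^{\pm}$ for $t = 0,1,\infty$ by an explicit computation of ${\rm Hom}_{gr R}(N_t, L_i)$ and ${\rm Hom}_{gr R}(D_t^{\pm}, L_i)$ as representations of $Q$ (writing down the chain maps on the matrix factorizations, and using the defect $\partial_A$ to pin down the dimension vector). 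Only this identification shows that the modules $N_t$, $D_t^{\pm}$ hit \emph{all} simple regular representations, that the geometric $\mathbb{P}^1$ of conics matches the tubular $\mathbb{P}^1$ point-by-point, and hence --- since every regular indecomposable is an iterated extension of simple regulars, and the corresponding extensions are realized on the MCM side --- that the list in the table is exhaustive. A support/annihilator heuristic is not a substitute: for instance, knowing $\underline{\rm End}(N_t)=k$ (Prop \ref{Nindec}) does not by itself force ${\rm E}(N_t)$ to be quasi-simple in a rank-two tube, so without the explicit computation you cannot rule out that some simple regulars are missed and the classification is incomplete.

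There is also a smaller gap in your complexity-two step. In the paper this part is essentially free because the tilting object is $T = \big(\bigoplus_i L_i\big) \oplus k^{st}$ (Theorem \ref{tiltingobject}), so ${\rm E}(k^{st}) = P(0)$ and ${\rm E}(L_i) = P(i)$, which immediately places $k^{st}$ and the $L_i$ in the five distinct preprojective orbits with $k^{st}$ at the central vertex. You instead treat ${\rm E}$ as a black box and appeal to the $S_4$-symmetry of $X$; to make that work you would still need to verify that $k^{st}$ and the $L_i$ have complexity two (the paper computes their complete resolutions explicitly in Section 3.1) and that they lie in five pairwise distinct $(m)[n]$-orbits, neither of which follows from symmetry alone. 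This part is patchable, but as written the proposal does not constitute a proof of Theorem \ref{maintheorem}.
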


We will see that all MCM modules over the completion $\widehat{R}$ are gradable, and so these are classified as well by the above list. We describe all indecomposables in section $3$. In particular we list the Betti tables and numerical invariants of each indecomposable MCM module, and classify the Ulrich modules.

Lastly, let $\Pi(Q)$ be the preprojective algebra of type $Q = \widetilde{D}_4$. From Theorem \ref{E} we extract:
\begin{thm*}
There is an isomorphism of graded algebras $\Pi(Q) \cong {\rm Ext}^{{\rm diag}}_R(U, U)$ for a specific Koszul module $U$.
\end{thm*}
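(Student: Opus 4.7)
The plan is to use Theorem~\ref{E} to reduce the desired isomorphism to a computation inside $D^{b}(kQ)$, where $\Pi(\widetilde{D}_{4})$ admits a standard description as an algebra of Ext groups along the $\tau$-orbit of a suitable object. I propose
\[
U = k^{st} \oplus L_{1}^{st} \oplus L_{2}^{st} \oplus L_{3}^{st} \oplus L_{4}^{st},
\]
the direct sum of the five complexity-$2$ MCM approximations singled out by Theorem~\ref{maintheorem}. After uniformly shifting the summands so that $U$ is generated in a single internal degree, under ${\rm E}$ the module $U$ goes to an object with five indecomposable components, one per vertex of $\widetilde{D}_{4}$; in particular $\underline{\rm End}(U)_{0} = k^{5}$, matching $\Pi(\widetilde{D}_{4})_{0}$.

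The first step is to verify that $U$ is Koszul over $R$. The base ring $R$ is Koszul, being the homogeneous coordinate ring of $4$ generic points in $\mathbb{P}^{2}$ (a classical result). Each point module $L_{i} = R/I(p_{i})$ has a linear minimal free resolution with Betti numbers $\beta_{n}(L_{i}) = n+1$, obtained from the Koszul functional equation $P^{R}_{L_{i}}(t) \cdot H_{R}(-t) = H_{L_{i}}(-t) = 1/(1+t)$; likewise $k^{st} \simeq \mathfrak{m}$ is Koszul with $\beta_{n} = 2n+3$. The second step is to compute $\mathrm{Ext}^{\rm diag}_{R}(U,U)$ by transferring along ${\rm E}$. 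Since $R$ is $1$-dimensional Gorenstein with $\omega_{R}$ a degree shift of $R$, the internal grading shift $(1)$ on $\underline{\rm MCM}({\rm gr}\,R)^{op}$ matches, under ${\rm E}$, an autoequivalence of $D^{b}(kQ)$ realising (up to suspension) the inverse Auslander--Reiten translate $\tau^{-1}$. The Koszul property of $U$ then aligns the bigrading so that the diagonal Ext algebra is identified with
\[
\bigoplus_{n \geq 0} \mathrm{Hom}_{D^{b}(kQ)}\!\left({\rm E}(U),\; \tau^{-n}\,{\rm E}(U)\right),
\]
which in the Gelfand--Ponomarev / Baer--Geigle--Lenzing realization is $\Pi(\widetilde{D}_{4})$ as a graded algebra.

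The main obstacle is pinning down the grading dictionary in the second step: verifying that the internal degree shift $(1)$ on the MCM side corresponds (modulo suspension) to $\tau^{-1}$ on the $kQ$ side, and that this matching is compatible with the algebra structure and not merely a bijection of graded vector spaces. Once established, the preprojective relations $\sum_{i=1}^{4} a_{i}^{*} a_{i} = 0$ at the central vertex and $a_{i} a_{i}^{*} = 0$ at each leaf follow from the Gelfand--Ponomarev construction. A more hands-on alternative --- verifying these relations directly on the MCM side via Yoneda products computed from the minimal resolutions of $k$ and the $L_{i}$, using the explicit pencil of conics through $X$ --- is more concrete but significantly more tedious.
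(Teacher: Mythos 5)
Your overall strategy is the paper's: describe $\Pi(Q)$ as the orbit algebra $\bigoplus_{i\ge 0}{\rm Hom}_{{\rm D}^b(kQ)}(A,\tau^{-i}A)$, transfer it across ${\rm E}$ using $\tau=(1)[-1]$, and match the result with a diagonal Ext algebra via stabilization. (Incidentally, the step you flag as the main obstacle is already settled in Section 3: $\omega_R=R(1)$ and $\dim R=1$ give $\mathbb{S}=(1)$, hence $\tau=(1)[-1]$ and $\tau^{-i}T=T(-i)[i]$ on the nose, compatibly with composition.) The genuine gaps are in your choice of $U$. First, $U=k^{st}\oplus\bigoplus_i L_i$ is not Koszul: $k^{st}$ is not $\mathfrak{m}$ but rather $syz_R^{-1}(\mathfrak m)$ (in the singularity category $k\cong\mathfrak m[1]$, so $k^{st}=\mathfrak m[1]$), and by the Betti table in Prop.~\ref{betti} it has two generators, in internal degrees $-1$ and $0$. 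No uniform shift makes it generated in a single degree, so your Koszulity verification (and the claim $\beta_n(k^{st})=2n+3$) fails, and the statement explicitly asks for a Koszul module. The paper instead takes $U=\big(\bigoplus_i L_i\big)\oplus k$, the \emph{un}-stabilized modules, which are generated in degree $0$ with linear resolutions.

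Second, with your $U$ equal to the MCM module $T$, the degree-zero piece of ${\rm Ext}^{\rm diag}_R(U,U)$ is ${\rm Hom}_{gr R}(T,T)$, whereas $\Pi(Q)_0=kQ=\underline{\rm Hom}_{gr R}(T,T)$; you never show that degree-zero morphisms factoring through projectives vanish, and a priori ${\rm Hom}_{gr R}(T,T)$ can be strictly larger (the summand $k^{st}$, having a generator in degree $-1$, makes this a real concern). This is exactly the point the paper's choice of $U$ is designed to handle: by Orlov's theorem (Theorem \ref{tiltingobject} and Lemma \ref{unstabilization}), $\Psi_0$ is fully faithful with $\Psi_0(T)=U$, so ${\rm st}\colon{\rm Hom}_{gr R}(U,U)\xrightarrow{\cong}\underline{\rm Hom}_{gr R}(T,T)$, while for $i\ge 1$ the map ${\rm st}\colon{\rm Ext}^i_{gr R}(U,U(-i))\to\underline{\rm Hom}_{gr R}(T,T(-i)[i])$ is bijective because $\dim R=1$. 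Your positive-degree identification does go through for an MCM $U$ (Ext agrees with stable Ext there), but as written the degree-zero comparison and the Koszulity claim are unproven, so the argument does not yet establish the theorem as stated.
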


The structure of the paper is as follows. We review standard background in section $2$ and present the classification of representations of $Q$ in the form we will need. Note that as the setting of this paper involves at most elementary commutative algebra, some effort was made to keep the work self-contained and accessible. We then lay out the analogous classification of MCM modules in section $3$ but wait until section $4$ to prove Theorems \ref{E} and \ref{maintheorem}. The relationship with the preprojective algebra is investigated in the last section.\\

\begin{ack*}
I wish to thank Benjamin Briggs and \"Ozg\"ur Esentepe for careful readings of this paper and suggestions toward improving the presentation. Thanks are also due to Ragnar-Olaf Buchweitz for feedback and for encouraging me to write up this note.
\end{ack*}

\section{Background}


Throughout this paper, let $k$ be an algebraically closed field, which will eventually be of characteristic not $2$ starting in section $3$. All modules will be finitely generated, and are taken to be left modules when the ring is noncommutative. For graded modules $M, N$ over a graded ring $R$, we write ${\rm Hom}_{gr R}(M, N) = {\rm Hom}_R^0(M, N)$ for the homogeneous morphisms of degree zero. Let $M(i)$ stand for the internal shift $M(i)_n = M_{n+i}$. When $R$ is Noetherian, the isomorphism $\bigoplus_{i \in \mathbb{Z}} {\rm Ext}^*_{gr R}(M, N(i)) \cong {\rm Ext}^*_R(M, N)$ puts a natural bigrading on the latter. Complexes will be written homologically or cohomologically as needed via $C_n = C^{-n}$, with suspension functor given by $(C[1])_n = C_{n-1}$ and $(C[1])^{n} = C^{n+1}$.

From now on, let $R$ stand for a commutative Noetherian graded connected $k$-algebra of finite Krull dimension $d$. Call $R$ Gorenstein if it is Cohen-Macaulay and its canonical module is invertible, that is $\omega_R \cong R(a)$ for some $a \in \mathbb{Z}$. It is equivalent to require that 
\begin{align*}
{\rm Ext}^i_R(k, R) \cong \begin{cases} 0 & i \neq d\\
										k(-a) & i = d.
						\end{cases}
\end{align*}
If $S = k[x_0, ..., x_n]$ and $R = S/I$ with $I = (f_1, ..., f_c)$ generated by a homogeneous regular sequence in $(x_0, ..., x_n)$, then $R$ is Gorenstein and $\omega_R = R(a)$ with $a = \sum_{j=1}^c |f_j| - \sum_{i=0}^n |x_i|$. 

Call a graded $R$-module $M$ Maximal Cohen-Macaulay (MCM) if $M$ admits an $M$-regular sequence of length $d$. By Local Duality, over a Gorenstein ring this is equivalent to the cohomological vanishing criterion ${\rm Ext}^i_R(M, R) = 0$ for $i > 0$. MCM modules are closed under taking duals $M^* = {\rm Hom}_R(M, R)$, extensions and summands, and are always reflexive. 

Since the canonical module $\omega_R$ always has finite injective dimension equal to $d$, over a Gorenstein ring $R$ any $i^{th}$-syzygy module $syz^i_R(N)$ is MCM for $i \geq d$. It is a theorem of R.-O. Buchweitz that the converse holds:

\begin{thm}[Buchweitz, \cite{BuchweitzManuscript}] The following are equivalent over $R$ Gorenstein:
\begin{enumerate}[i.]
\item M is a MCM module.
\item For any $i \geq 0$ there is an $R$-module $N$ with $M \cong syz_R^i(N)$. 
\end{enumerate}
\end{thm}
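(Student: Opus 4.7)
The plan is to prove both directions via the same construction: dualize a graded free resolution of $M^{*}$ against $R$. The easy direction (ii)$\Rightarrow$(i) follows by taking $i = d$, since the excerpt has just noted that $d$-th syzygies over a Gorenstein ring are automatically MCM (a consequence of $\omega_R$ having injective dimension $d$ together with the Ext-vanishing criterion).

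For (i)$\Rightarrow$(ii), fix an MCM module $M$. Using the facts recalled above that MCM modules are closed under duality and are reflexive, the dual $M^{*} = \operatorname{Hom}_R(M, R)$ is again MCM and $M \cong M^{**}$. Choose a graded free resolution $\cdots \to F_2 \to F_1 \to F_0 \to M^{*} \to 0$, apply $\operatorname{Hom}_R(-, R)$, and observe that the dualized complex is acyclic because $\operatorname{Ext}^{j}_R(M^{*}, R) = 0$ for every $j > 0$. This yields a long exact sequence
\[
0 \to M \to F_0^{*} \to F_1^{*} \to F_2^{*} \to \cdots
\]
with $M \cong (M^{*})^{*}$ sitting at the left end and each $F_j^{*}$ graded free.

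Given any $i \geq 0$, truncate this sequence: take $N = M$ for $i = 0$, and for $i \geq 1$ set $N = \operatorname{coker}(F_{i-2}^{*} \to F_{i-1}^{*})$, with the convention $F_{-1}^{*} = M$ so that $N = F_0^{*}/M$ when $i = 1$. The initial segment
\[
0 \to M \to F_0^{*} \to F_1^{*} \to \cdots \to F_{i-1}^{*} \to N \to 0
\]
is exact, and its last $i{+}1$ terms form the start of a graded free resolution of $N$ whose $i$-th syzygy is precisely $M$.

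I do not anticipate a substantive obstacle: every ingredient (closure of MCM under duality, reflexivity, the Ext-vanishing characterization, and MCM-ness of deep syzygies) is already on the table, and the argument is purely formal. The only point requiring any care is to resolve $M^{*}$ rather than $M$ itself, so that after dualizing it is $M \cong M^{**}$ (and not some cosyzygy of $M$) that appears at the left of the long exact sequence.
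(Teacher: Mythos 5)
Your proposal is correct and follows essentially the same route as the paper: dualize a free resolution of $M^{*}$, use ${\rm Ext}^{>0}_R(M^{*},R)=0$ (since $M^{*}$ is again MCM) together with reflexivity $M\cong M^{**}$ to get an acyclic coresolution $0\to M\to F_0^{*}\to F_1^{*}\to\cdots$, and truncate to exhibit $M$ as an $i$-th syzygy, with the converse handled by the remark that $d$-th syzygies over a Gorenstein ring are MCM. The paper merely packages the same construction slightly further, gluing this coresolution to a resolution of $M$ to form a complete resolution before truncating, which is not needed for the statement itself.
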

The proof proceeds by construction of a complete resolution of $M$: let $P \xrightarrow{\sim} M$ and $Q \xrightarrow{\sim} M^*$ be projective resolutions. Then $M \cong M^{**} \xrightarrow{\sim} Q^*$ is a projective coresolution of $M$ by the cohomological vanishing criterion, and so the composite map $P \xrightarrow{\sim} M \cong M^{**} \xrightarrow{\sim} Q^*$ is a quasi-isomorphism of complexes of projectives
\[
\xymatrix@C1.6pc@R0.5pc{... \ar[r] & P_1 \ar[r] & P_0 \ar[rr] \ar[dr] & & Q_0^* \ar[r] & Q_1^* \ar[r] & ...\\
					   &            &             & M \ar[ur] & 
}
\]
and thus the unbounded complex of projectives $C = {\rm cone}(P \to Q^*)[-1]$ is acyclic. We call any unbounded acyclic complex of projectives whose non-negative truncation resolves $M$ a complete resolution of $M$. Truncating sufficiently far to the right reveals $M$ as a syzygy module $syz^i_R(N)$ for any $i \ge 0$.

For graded $R$-modules $M, N$, denote by $\underline{{\rm Hom}}_{gr R}(M, N) = {\rm Hom}_{gr R}(M, N)/I(M, N)$ the space of stable homomorphisms, where we mod out by the ideal $I = I(M, N)$ of morphisms factoring through projective graded modules. Denote by $\underline{{\rm grmod}} R$ the stable category of graded $R$-modules, and by $\underline{{\rm MCM}}(gr R)$ the subcategory consisting of MCM modules. Note that since $R$ is graded connected and locally finite, ${\rm gr} R$ is Hom finite, and thus so is $\underline{{\rm MCM}}(gr R)$. Let $\mathcal{K}^{\infty}_{ac}({\rm proj} R)$ stand for the homotopy category of unbounded acyclic complexes of finite projective graded $R$-modules. 
\begin{prop}[Buchweitz, \cite{BuchweitzManuscript}] There is an equivalence of categories $\mathcal{K}^{\infty}_{ac}({\rm proj} R) \cong \underline{{\rm MCM}}(gr R)$ given by
\[
C \mapsto {\rm coker}(C_1 \to C_0).
\]
Hence $\underline{{\rm MCM}}(gr R)$ inherits the structure of an (algebraic) triangulated category. In particular the suspension corresponds to taking cosyzygy $M[1] \cong syz^{-1}_R(M) = {\rm coker}(C_0 \to C_{-1})$ for any complete resolution $C$, and its inverse to taking syzygy $M[-1] \cong syz^1_R(M) = {\rm coker}(C_2 \to C_1)$.
\end{prop}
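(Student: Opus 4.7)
The plan is to verify in succession that the functor $F: C \mapsto {\rm coker}(d^C_1)$ is well-defined on objects and on homotopy classes of morphisms, essentially surjective, fully faithful, and intertwines the natural shift on the left with taking cosyzygy on the right. Well-definedness on objects is immediate: acyclicity of $C$ exhibits $M := F(C)$ as an $i$-th syzygy for every $i \geq 0$, so the theorem of Buchweitz quoted above forces $M$ to be MCM. Essential surjectivity is exactly the complete resolution construction carried out in the preceding discussion, applied to an arbitrary MCM module.

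For well-definedness on morphisms, a null-homotopy $s: f \simeq 0$ yields $f_0 = d^D_1 s_0 + s_{-1} d^C_0$, so the induced map $\bar{f}: M \to N$ factors as $M \hookrightarrow C_{-1} \xrightarrow{s_{-1}} D_0 \twoheadrightarrow N$, where the first map is the monomorphism coming from $M = {\rm im}(d^C_0)$ via acyclicity. Since $C_{-1}$ is projective, $\bar{f}$ vanishes in $\underline{{\rm Hom}}_{gr R}(M, N)$. For full faithfulness, one lifts any stable $\bar{g}: M \to N$ to $g_{\geq 0}: C_{\geq 0} \to D_{\geq 0}$ via the projective resolution comparison theorem, and then extends step by step to the left: the obstruction to extending at each negative stage lies in an ${\rm Ext}^1_R(\text{MCM}, \text{proj})$ group, which vanishes by the Gorenstein cohomological criterion ${\rm Ext}^{>0}_R(M, R) = 0$ applied to the intermediate MCM syzygies appearing along the complete resolution. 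Run in null-homotopy mode, the same obstruction-theoretic argument shows that any chain map inducing a stably-zero morphism is itself null-homotopic.

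The main obstacle is precisely this inductive lifting past degree zero: on the positive-degree side one has only the universal property of projective resolutions to work with, while the negative-degree extension requires the full force of the MCM hypothesis together with the observation that every cokernel appearing in a complete resolution is itself MCM. Once $F$ is shown to be an equivalence, the shift $C[1]_n = C_{n-1}$ in $\mathcal{K}^{\infty}_{ac}({\rm proj} R)$ transports to ${\rm coker}(d^C_0: C_0 \to C_{-1}) = syz_R^{-1}(M)$, confirming the claimed cosyzygy description of the suspension; and since $\mathcal{K}^{\infty}_{ac}({\rm proj} R)$ is a full triangulated subcategory of the homotopy category of complexes over the additive category of graded projective $R$-modules, $\underline{{\rm MCM}}(gr R)$ inherits an algebraic triangulated structure through $F$.
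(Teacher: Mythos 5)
Your argument is correct and is essentially the classical one: the paper gives no proof of this proposition (it is quoted from Buchweitz's manuscript), and the one ingredient the surrounding text does supply, the construction of a complete resolution from $P \xrightarrow{\sim} M \cong M^{**} \xrightarrow{\sim} Q^*$, is exactly what you invoke for essential surjectivity. Your treatment of the remaining points is the standard obstruction argument and is sound: the cokernel of an acyclic complex of projectives is an $i$-th syzygy for all $i \geq 0$, hence MCM by the quoted theorem; a null-homotopy makes $\bar f$ factor through the projective $C_{-1}$ via the monomorphism $M \hookrightarrow C_{-1}$; and the leftward extension of chain maps is governed by ${\rm Ext}^1_R(\mbox{MCM}, {\rm proj})=0$, since the cokernel of $M \hookrightarrow C_{-1}$ is the MCM module ${\rm coker}(d^C_0)$. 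The only place you compress is faithfulness: if $f$ induces a map $\bar f$ that merely factors through a projective $P$ (rather than being zero on the nose), you should first lift $P \to N$ through $D_0 \twoheadrightarrow N$ and correct $f$ on the non-negative part by a homotopy, and only then run the leftward induction, where the same ${\rm Ext}^1$ vanishing lets the homotopy propagate to negative degrees; with that spelled out the proof is complete, and the identification of the shift with cosyzygy and the inheritance of the algebraic triangulated structure follow exactly as you state.
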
 
Complete resolutions are unique up to homotopy, and so the Tate cohomology $\widehat{\rm Ext}^*_{gr R}(M, N) = {\rm H}^*\big({\rm Hom}_{gr R}(C(M), N)\big)$ is well-defined for any module $N$, where $C(M)$ is any complete resolution of $M$. If $N$ is also MCM, the canonical map $C(N) \to N$ induces a bijection ${\rm H}^*\big({\rm Hom}_{gr R}(C(M), C(N)\big) \xrightarrow{\cong} {\rm H}^*\big({\rm Hom}_{gr R}(C(M), N)\big)$, and so $\underline{{\rm Hom}}_{gr R}(M, N[n]) \cong \widehat{\rm Ext}_{gr R}^n(M, N)$ for $n \in \Z$. A complete resolution $C$ is minimal if the differential on $k \otimes_R C$ is trivial. Minimal complete resolutions of MCM modules always exist since $R$ is graded connected, and are unique up to isomorphism.

Note that the distinguished triangles in $\underline{{\rm MCM}}(gr R)$ are the images of short exact sequences in ${\rm MCM}(gr R)$.

We shall make implicit use of another theorem of Buchweitz. Let ${\rm D}_{sg}(gr R) = {\rm D}^b(gr R)/{\rm D}_{{\rm perf}}(gr R)$ stand for the graded singularity category of $R$, where we take the Verdier quotient by the perfect complexes. Then
\begin{prop}[Buchweitz, \cite{BuchweitzManuscript}] There is an equivalence of triangulated categories ${\rm D}_{sg}(gr R) \cong \underline{{\rm MCM}}(gr R)$.
\end{prop}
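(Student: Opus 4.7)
The plan is to construct a triangulated functor $F\colon \underline{\rm MCM}(gr R) \to {\rm D}_{sg}(gr R)$ by sending an MCM module $M$ to itself, viewed as a complex concentrated in cohomological degree zero. Well-definedness on morphisms is automatic, since any map factoring through a graded projective module becomes zero in ${\rm D}_{sg}(gr R)$ (projectives being perfect). The functor is triangulated because the distinguished triangles of $\underline{\rm MCM}(gr R)$ arise from short exact sequences of MCM modules, and these remain distinguished triangles in ${\rm D}^b(gr R)$ and hence in the Verdier quotient.

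For essential surjectivity, given $X \in {\rm D}^b(gr R)$ I would take a graded projective resolution $P \xrightarrow{\sim} X$ and an integer $n \geq d$ exceeding the smallest degree where $X$ has cohomology. The brutal truncation yields a distinguished triangle
\[
\sigma^{<-n} P \longrightarrow P \longrightarrow \sigma^{\geq -n} P
\]
in ${\rm D}^b(gr R)$. The right-hand term is a bounded complex of projectives, hence perfect and zero in ${\rm D}_{sg}(gr R)$, while the left-hand term is quasi-isomorphic to a shift of $syz^{n+1}_R(X)$, which is MCM by Buchweitz's syzygy theorem cited above. Thus $X$ is isomorphic in ${\rm D}_{sg}(gr R)$ to the image under $F$ of an MCM module.

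The crux is the fully faithfulness of $F$: one must show that the natural comparison $\underline{\rm Hom}_{gr R}(M, N) \to {\rm Hom}_{{\rm D}_{sg}(gr R)}(M, N)$ is a bijection for MCM modules $M, N$. For faithfulness, if $f\colon M \to N$ becomes zero in ${\rm D}_{sg}(gr R)$, then $f$ factors in ${\rm D}^b(gr R)$ through a perfect complex $P$; using that $M$ and $N$ sit in degree zero, a degree-zero truncation of $P$ extracts a factorization of $f$ through a genuine projective $R$-module. For fullness, a morphism in the Verdier quotient is a roof $M \xleftarrow{s} Z \xrightarrow{g} N$ with ${\rm cone}(s)$ perfect; applying the essential-surjectivity construction to $Z$ replaces it by an MCM module $M'$ with a compatible map to $M$ whose cone is still perfect, from which one reads off a stable morphism $M \to N$ representing the roof.

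The main obstacle is the roof-chasing in the full-faithfulness step. A cleaner route, which I would ultimately prefer, is to invoke the preceding proposition: since $\underline{\rm MCM}(gr R) \cong \mathcal{K}^\infty_{ac}({\rm proj}\,R)$ is already established, it suffices to show $\mathcal{K}^\infty_{ac}({\rm proj}\,R) \cong {\rm D}_{sg}(gr R)$ via the functor that sends an acyclic complex $C$ to $M = {\rm coker}(C_1 \to C_0)$. On the $\mathcal{K}^\infty_{ac}$ side, Hom computations reduce directly to homotopy classes of maps between complete resolutions, which the previous proposition already identifies with stable morphisms, and the Verdier-quotient technicalities are entirely bypassed.
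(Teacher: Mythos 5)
The paper offers no proof of this proposition; it is quoted from Buchweitz's manuscript, so what you are really reconstructing is the standard argument. Your architecture is the right one: the degree-zero inclusion functor kills maps factoring through projectives, and essential surjectivity via a projective resolution, brutal truncation, and the syzygy characterization of MCM modules is essentially complete (modulo small points: with cohomological differentials $\sigma^{\geq -n}P$ is the subcomplex and $\sigma^{<-n}P$ the quotient, and one should record why $F$ commutes with suspension, i.e.\ that $0 \to M \to P \to syz_R^{-1}(M) \to 0$ with $P$ projective makes $syz_R^{-1}(M) \cong M[1]$ in ${\rm D}_{sg}(gr R)$).

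The genuine gap is in full faithfulness, which you correctly identify as the crux but do not actually carry out: nowhere in your faithfulness or fullness sketch does the MCM hypothesis enter, and it must, since the analogous statement for the full stable module category is false. (If $M$ has finite projective dimension but is not projective, ${\rm id}_M$ is nonzero in $\underline{{\rm grmod}}\,R$ yet zero in ${\rm D}_{sg}(gr R)$, so no truncation trick can work without a hypothesis on $M$.) The working ingredient is ${\rm Ext}^i_{R}(M,R)=0$ for $i>0$ when $M$ is MCM: this is what allows one to prove, by induction on the length of a bounded complex of projectives $P$, that any morphism $M \to P$ in ${\rm D}^b(gr R)$ factors up to homotopy through a single projective module, which is the content hidden in your phrase ``a degree-zero truncation of $P$ extracts a factorization.'' Similarly, in the fullness step, replacing $Z$ by an MCM module $M'$ only produces a map $M' \to M$ that is invertible in ${\rm D}_{sg}(gr R)$; to ``read off'' a stable morphism $M \to N$ you must show this map admits a stable inverse, or run the cofinality argument showing that syzygy-type roofs dominate all roofs, and this again rests on the same Ext-vanishing. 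Finally, the proposed ``cleaner route'' through $\mathcal{K}^{\infty}_{ac}({\rm proj}\,R)$ bypasses nothing: since the preceding proposition identifies $\mathcal{K}^{\infty}_{ac}({\rm proj}\,R)$ with $\underline{{\rm MCM}}(gr R)$, proving $\mathcal{K}^{\infty}_{ac}({\rm proj}\,R) \cong {\rm D}_{sg}(gr R)$ is verbatim the same problem, and the morphisms on the ${\rm D}_{sg}$ side are still roofs in a Verdier quotient that have to be analyzed exactly as above.
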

The composite functor ${\rm D}^b(gr R) \to \underline{{\rm MCM}}(gr R)$ can be described as follows: let ${\rm F}_*$ be a complex with bounded cohomology, resolved by a lower bounded complex of projectives $P_* \xrightarrow{\sim} {\rm F}_*$. Then the tail of $P_*$ is exact, and so 
\[
{\rm coker}(P[-n]_1 \xrightarrow{\partial} P[-n]_0) =: M
\]
is MCM for $n \gg 0$. The image of ${\rm F}_*$ in $\underline{{\rm MCM}}(gr R)$ is then given by $M[n] = {\rm syz}_R^{-n}(M)$. We call this the MCM approximation  (or stabilization) of ${\rm F}_*$, denoted ${\rm F}^{st}_*$. 

Note that the induced map ${\rm st}: {\rm Ext}_{gr R}^n(M, N) \to \widehat{\rm Ext}_{gr R}^n(M^{st}, N^{st})$ for modules $M, N$ is a bijection whenever $n \geq d$, or whenever $n \geq 1$ if $M$ is itself MCM.

Finally, call an additive category Krull-Schmidt if objects have finite decompositions into indecomposables and the endomorphism rings of indecomposables are local (\cite{MR3431480}). The following was observed by Iyama and Takahashi, where it follows from idempotent lifting in artinian rings.

\begin{lem}[Iyama-Takahashi, \cite{MR3063907}]\label{krull-schmidt}
The categories ${\rm MCM}(gr R)$ and $\underline{{\rm MCM}}(gr R))$ are Krull-Schmidt.
\end{lem}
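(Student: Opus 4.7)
The plan is to invoke the general criterion that an additive category is Krull-Schmidt if and only if the endomorphism ring of every object is semiperfect, and then reduce to the finite-dimensionality of endomorphism rings in our setting. Since a finite-dimensional $k$-algebra is artinian and hence semiperfect, it will suffice to show that ${\rm End}_{gr R}(M)$ is finite-dimensional over $k$ for every finitely generated graded $R$-module $M$, and likewise for its quotient $\underline{{\rm End}}_{gr R}(M)$.

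To establish finite-dimensionality, I would fix homogeneous generators $m_1, \ldots, m_r$ of $M$ of respective degrees $d_1, \ldots, d_r$, and observe that a degree-zero graded endomorphism $f$ is determined by the tuple $(f(m_i))_{i=1}^{r} \in \prod_i M_{d_i}$. Since $R$ is graded connected and Noetherian, it is finitely generated as a $k$-algebra in non-negative degrees, so each $R_n$ is finite-dimensional over $k$; as $M$ is finitely generated, each graded piece $M_n$ is then also finite-dimensional. Hence ${\rm End}_{gr R}(M)$ embeds into a finite product of finite-dimensional vector spaces, giving a finite-dimensional $k$-algebra. The stable endomorphism ring $\underline{{\rm End}}_{gr R}(M)$, being a quotient, is also finite-dimensional.

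Applying the Krull-Schmidt criterion to the ambient category ${\rm grmod}\, R$ and restricting to the additively-closed full subcategory ${\rm MCM}(gr R)$ yields the first statement, while applying the criterion directly to $\underline{{\rm MCM}}(gr R)$ via its stable endomorphism rings yields the second. Finite decompositions into indecomposables follow from the descending chain condition on orthogonal idempotent decompositions in a finite-dimensional algebra, and the endomorphism rings of the indecomposable summands are local (corresponding to primitive idempotents, which exist and lift modulo the Jacobson radical in any artinian ring).

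The only substantive step is the finite-dimensionality of endomorphism rings, which is an immediate consequence of the graded connected Noetherian hypotheses on $R$; everything else is idempotent lifting in artinian rings, as flagged in the lemma's attribution to Iyama and Takahashi. I do not anticipate a genuine obstacle here, only the mild subtlety that passing to the stable category could in principle destroy indecomposability (a projective summand becomes zero), but this is precisely accounted for by the fact that the quotient map ${\rm End}_{gr R}(M) \twoheadrightarrow \underline{{\rm End}}_{gr R}(M)$ remains a surjection of finite-dimensional algebras, so the Krull-Schmidt property is inherited.
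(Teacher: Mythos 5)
Your proposal is correct and follows essentially the same route the paper intends: Hom-finiteness of the graded module category (from $R$ being graded connected with finite-dimensional graded pieces) plus idempotent lifting in the resulting artinian endomorphism algebras, which is exactly the Iyama--Takahashi argument the lemma cites. The only point stated too strongly is the criterion ``Krull--Schmidt iff all endomorphism rings are semiperfect,'' which also requires split idempotents; this holds here because ${\rm MCM}(gr R)$ is closed under direct summands inside the abelian category of graded modules, and in $\underline{{\rm MCM}}(gr R)$ because idempotents of $\underline{{\rm End}}_{gr R}(M)$ lift along the surjection from the finite-dimensional (hence semiperfect) algebra ${\rm End}_{gr R}(M)$ --- equivalently, one decomposes $M$ upstairs and notes that the stable endomorphism ring of an indecomposable is a quotient of a local ring, so it is local or zero, which is precisely the projective-summand subtlety your last paragraph addresses.
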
 

By the Krull-Remak-Schmidt Theorem it follows that each graded MCM module has an essentially unique decomposition into indecomposable modules, and that MCM modules with no free summands are stably isomorphic if and only if they are isomorphic. Hence the classification of MCM modules reduces to the classification of indecomposable stable modules.

\subsection*{Auslander-Reiten-Serre duality and almost-split sequences}\label{serreduality}
For the rest of this section assume that $R$ is Gorenstein. We briefly review standard background that is common to algebraic geometry, commutative algebra and representation theory of artin algebras.

\begin{defn}
Let $\mathcal{T}$ be a triangulated Hom-finite $k$-linear category. A Serre functor for $\mathcal{T}$ is an exact autoequivalence $\mathbb{S}: \mathcal{T} \to \mathcal{T}$ equipped with natural isomorphisms
\[
{\rm Hom}_{\mathcal{T}}(X, \mathbb{S}(Y)) \cong {\rm D}{\rm Hom}_{\mathcal{T}}(Y, X)
\]
where ${\rm D} = {\rm Hom}_k(- ,k)$ denotes the $k$-dual. When they exist, Serre functors are unique up to isomorphism.
\end{defn}

If $X$ is a smooth projective variety over $k$, the functor $\mathbb{S} = - \otimes \omega_X  [\textup{dim } X]$ is a Serre functor for ${\rm D}^b(X)$. It is quite remarkable that this extends to the stable category of MCM modules.

\begin{prop}[Auslander \cite{MR842486}, \cite{MR3063907}] Assume that $R$ has isolated singularities. Then $\mathbb{S} = - \otimes \omega_R \ [\textup{dim } R - 1]$ is a Serre functor for $\underline{{\rm MCM}}(gr R)$.
\end{prop}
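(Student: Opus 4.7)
The plan is to exhibit, for each pair $X, Y \in \underline{\rm MCM}(gr R)$, a natural bifunctorial $k$-linear isomorphism
\[
\eta_{X,Y}:\ \underline{\rm Hom}_{gr R}\bigl(X, \mathbb{S}(Y)\bigr) \xrightarrow{\cong} {\rm D}\,\underline{\rm Hom}_{gr R}(Y, X).
\]
First I would translate the left-hand side. Writing $\omega_R \cong R(a)$ so that $\mathbb{S}(Y) \cong Y(a)[d-1]$, the identification $\underline{\rm Hom}_{gr R}(M, N[n]) \cong \widehat{\rm Ext}^n_{gr R}(M, N)$ already established reduces the task to producing a natural perfect pairing
\[
\widehat{\rm Ext}^{d-1}_{gr R}\bigl(X, Y(a)\bigr) \otimes_k \underline{\rm Hom}_{gr R}(Y, X) \longrightarrow k.
\]

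Next I would construct this pairing by combining a cup product with a trace. Composition of representatives along complete resolutions yields
\[
\widehat{\rm Ext}^{d-1}_{gr R}\bigl(X, Y(a)\bigr) \otimes \underline{\rm Hom}_{gr R}(Y, X) \xrightarrow{\cup} \widehat{\rm Ext}^{d-1}_{gr R}\bigl(X, X(a)\bigr),
\]
and I would build a canonical trace $\widehat{\rm Ext}^{d-1}_{gr R}(X, X(a)) \to k$ using graded Grothendieck local duality, in the form $H^d_{\mathfrak{m}}(R) \cong E(k)(a)$ together with Matlis self-duality of the injective hull, so that the trace picks off the class of ${\rm id}_X \in \underline{\rm End}(X)$. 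Equivalently this amounts to establishing the Auslander-Reiten formula ${\rm D}\,\underline{\rm Hom}(X, Y) \cong \widehat{\rm Ext}^1(Y, \tau X)$ with translate $\tau X = (X \otimes \omega_R)[d-2]$, from which the Serre functor arises via $\mathbb{S} = \tau[1]$.

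The main obstacle will be verifying non-degeneracy of the pairing, and this is exactly where the isolated singularity hypothesis becomes decisive. Isolatedness forces every MCM module to be locally free away from the irrelevant ideal $\mathfrak{m}$, so the graded spaces $\underline{\rm Hom}_{gr R}(M, N)$ and $\widehat{\rm Ext}^n_{gr R}(M, N)$ are annihilated by a power of $\mathfrak{m}$ and hence finite-dimensional over $k$. Combined with the reflexivity $X^{**} \cong X$ of MCM modules under $(-)^* = {\rm Hom}_R(-, R)$, graded Matlis duality then yields a perfect duality of finite-dimensional vector spaces by a standard dimension-counting argument in the style of Auslander \cite{MR842486}. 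Bifunctoriality in both slots is automatic from the functoriality of $\cup$ and the naturality of the trace, and exactness and autoequivalence of $\mathbb{S}$ follow from the invertibility of $\omega_R \cong R(a)$ and of the shift.
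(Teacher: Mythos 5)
A preliminary remark: the paper does not prove this proposition at all — it is quoted as background and attributed to Auslander \cite{MR842486} and to Iyama--Takahashi \cite{MR3063907} — so your proposal can only be compared with the standard proofs in the literature. Measured against those, your outline follows the usual route: rewrite $\underline{\rm Hom}_{gr R}(X,\mathbb{S}(Y))$ as Tate cohomology $\widehat{\rm Ext}^{d-1}_{gr R}(X,Y(a))$, pair it with $\underline{\rm Hom}_{gr R}(Y,X)$ by composition along complete resolutions, and manufacture a trace on $\widehat{\rm Ext}^{d-1}_{gr R}(X,X(a))$ from graded local duality; the isolated-singularity hypothesis enters exactly as you say, to force all stable Hom and Tate-Ext spaces to be of finite length, hence finite-dimensional over $k$. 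The reformulation via the Auslander--Reiten formula with $\tau = -\otimes\omega_R\,[d-2]$ and $\mathbb{S}=\tau[1]$ is also consistent, and the degree bookkeeping ($\mathbb{S}=(1)$ when $d=1$, $a=1$) matches the paper.

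The gap is at the decisive step, non-degeneracy. Finite-dimensionality of both sides together with reflexivity of MCM modules does not by itself produce a perfect pairing, and there is no ``dimension-counting'' shortcut: a priori the two spaces are not even known to have equal dimension — that equality is part of what is being proved, and Auslander's actual argument is not a dimension count but a functorial one (transpose, the four-term exact sequence, and Matlis duality applied to finite-length ${\rm Tor}$'s). The modern replacement is to apply graded local duality not merely to $R$ but to the whole stable Hom-complex: for a complete resolution $C(X)$ one identifies ${\rm D}\,{\rm H}^{*}\bigl({\rm Hom}_{gr R}(C(X),Y)\bigr)$ with the Tate cohomology $\widehat{\rm Ext}^{\,d-1-*}_{gr R}(Y,X\otimes\omega_R)$, using that the cohomology is finite length so that ${\rm R}\Gamma_{\mathfrak m}$ may be inserted harmlessly, and one checks that this identification is compatible with the composition product — that compatibility is exactly what makes your cup-and-trace pairing perfect, and as written your proposal assumes it rather than proves it. Two smaller points you should make explicit: the trace must annihilate morphisms factoring through projectives so that the pairing descends to $\underline{\rm Hom}$ (automatic once everything is phrased in Tate cohomology, not if one works with ordinary ${\rm Ext}$), and naturality of the resulting isomorphism in both variables — needed for $\mathbb{S}$ to be a Serre functor rather than a pointwise duality — requires tracking the trace under pushforward along morphisms, which deserves more than the one-line appeal to functoriality of the cup product.
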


Serre functors were independently discovered by Auslander and Reiten in the guise of the  translate $\tau$, introduced in the context of stable module categories, see \cite{MR1476671}. In \cite{MR1887637}, Reiten-Van den Bergh studied $\tau$ in the context of a Krull-Schmidt Hom-finite $k$-linear triangulated category $\mathcal{T}$. Let $\xi: X \to Y \to Z \xrightarrow{h} X[1]$  be a distinguished triangle in $\mathcal{T}$. We call $\xi$ an almost-split triangle if
\begin{enumerate}[i.]
\item X and Z are indecomposable,
\item $h \neq 0$,
\item if $W$ is indecomposable, then for every non-isomorphism $t: W \to Z$ we have $ht = 0$. 
\end{enumerate}

\begin{prop}[Reiten-Van den Bergh, \cite{MR1887637}] The following are equivalent:
\begin{enumerate}[1.]
\item Each indecomposable $Z$ of $\mathcal{T}$ sits inside an almost-split triangle, say $\tau Z \to Y \to Z \xrightarrow{h} \tau Z [1]$,
\item $\mathcal{T}$ admits a Serre functor $\mathbb{S}$.
\end{enumerate}
In this case $\tau = \mathbb{S}\circ [-1]$ and the map $Z \xrightarrow{h} \tau Z [1] = \mathbb{S}(Z)$ classifying the extension is Serre dual to the trace map ${\rm End}(Z) \to {\rm End}(Z)/{\rm rad} = k$.
\end{prop}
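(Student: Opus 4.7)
The plan is to prove $(2) \Rightarrow (1)$ directly via Serre duality, then attack the harder converse $(1) \Rightarrow (2)$ by assembling almost-split triangles into a global functor and hand-crafting the duality pairing.

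For $(2) \Rightarrow (1)$, suppose $\mathbb{S}$ exists and fix an indecomposable $Z$. Since $\mathcal{T}$ is Krull-Schmidt Hom-finite over $k$ algebraically closed (or more generally, one uses that ${\rm End}(Z)$ is local artinian with residue field a finite division algebra over $k$; in our setting it is $k$), the trace map $t: {\rm End}(Z) \twoheadrightarrow {\rm End}(Z)/{\rm rad} = k$ is well-defined. Serre duality gives ${\rm Hom}(Z, \mathbb{S}(Z)) \cong D\,{\rm End}(Z)$, so $t$ corresponds to a distinguished morphism $h: Z \to \mathbb{S}(Z)$. Set $\tau Z := \mathbb{S}(Z)[-1]$ and embed $h$ in a triangle $\tau Z \to Y \to Z \xrightarrow{h} \tau Z[1]$. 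I would then check the three axioms for an almost-split triangle: both $\tau Z$ and $Z$ are indecomposable since $\mathbb{S}$ and $[-1]$ are autoequivalences, and $h \neq 0$ because $t \neq 0$. For the key condition, given a non-isomorphism $s: W \to Z$ with $W$ indecomposable, naturality of Serre duality identifies the map $h \circ (-): {\rm Hom}(W, Z) \to {\rm Hom}(W, \mathbb{S}(Z))$ with the $k$-dual of $(-) \circ s: {\rm Hom}(Z, W) \to {\rm End}(Z)$ postcomposed with $t$; since $s$ is not an isomorphism, every composite $f \circ s$ with $f: Z \to W$ is a non-isomorphism in the local ring ${\rm End}(Z)$, hence in ${\rm rad}\,{\rm End}(Z) = \ker t$, so $h \circ s = 0$. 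This simultaneously establishes the final sentence of the statement.

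For $(1) \Rightarrow (2)$, I would first show that the assignment $Z \mapsto \tau Z$ on indecomposables extends to an autoequivalence of $\mathcal{T}$. Given $Z$ indecomposable with almost-split triangle $\tau Z \to Y \to Z \xrightarrow{h_Z} \tau Z[1]$, the defining property means that for every morphism $f: Z \to Z'$ between indecomposables, the composite $h_{Z'} \circ f$ factors uniquely (up to the indeterminacy that matters) through $h_Z$, producing a morphism $\tau f: \tau Z \to \tau Z'$; additivity lets one extend $\tau$ to arbitrary objects. A symmetric argument using almost-split triangles ending in $\tau^{-1}(-)$ (which exist by dualizing the axioms, once one has verified they hold) shows $\tau$ is invertible. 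I then set $\mathbb{S} := \tau \circ [1]$.

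The construction of the Serre duality pairing is the main obstacle and the technical heart of the argument. For each indecomposable $Z$, the map $h_Z$ canonically picks out a nonzero functional $\eta_Z: {\rm End}(Z) \to k$ by sending $g \in {\rm End}(Z)$ to the scalar such that $h_Z \circ g = \eta_Z(g) \cdot h_Z$ in ${\rm Hom}(Z, \mathbb{S}(Z))$ (using that $h_Z$ generates a one-dimensional subspace in the appropriate sense via the almost-split condition). I would extend this, for an arbitrary indecomposable $X$ and morphism $g: X \to Z$, to a pairing $\langle -, - \rangle_{X,Z}: {\rm Hom}(Z, \mathbb{S}(X)) \times {\rm Hom}(X, Z) \to k$ by sending $(\varphi, g)$ to the trace-like scalar extracted from $\varphi \circ g$ via an almost-split triangle on $X$. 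A Yoneda-type argument, using that almost-split triangles detect all morphisms (factorization through $Y \to Z$ versus being an isomorphism), shows non-degeneracy on indecomposables; bilinearity and naturality in both variables follow from the functoriality of $\tau$ built above. Finally, I would verify compatibility with the triangulated structure — i.e. that the pairing is compatible with shifts and cones — which is delicate and forces careful bookkeeping with signs and with the axioms of the triangulated category. Passing from indecomposables to arbitrary objects then uses Krull-Schmidt to conclude the existence of the natural isomorphism ${\rm Hom}(X, \mathbb{S}(Y)) \cong D\,{\rm Hom}(Y, X)$.
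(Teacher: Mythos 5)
The paper does not prove this proposition at all --- it is quoted from Reiten--Van den Bergh --- so your attempt can only be judged on its own terms. Your direction $(2)\Rightarrow(1)$ is essentially the standard argument and is fine: pull the functional killing ${\rm rad}\,{\rm End}(Z)$ back through Serre duality, complete $h$ to a triangle, and use locality of endomorphism rings to check the almost-split condition (one small slip: for $s\colon W\to Z$ and $f\colon Z\to W$ the relevant composite in ${\rm End}(Z)$ is $s\circ f$, not $f\circ s$). This also gives the last sentence of the statement.

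The converse $(1)\Rightarrow(2)$, however, is where the theorem's real content lies, and both of your key constructions are ill-posed as stated. First, defining $\tau f$ for $f\colon Z\to Z'$ by ``factoring $h_{Z'}\circ f$ through $h_Z$'' cannot work: if $f$ is a non-isomorphism between indecomposables, the almost-split property of the triangle at $Z'$ forces $h_{Z'}\circ f=0$, so the factorization is trivially satisfied by $0$ and is nowhere near unique (any map $\tau Z[1]\to\tau Z'[1]$ vanishing after precomposition with $h_Z$, i.e.\ factoring through $Y[1]$, may be added), while the true $\tau f$ is generally nonzero --- e.g.\ for irreducible $f$. Worse, even for the genuine Serre functor the relation $\mathbb{S}(f)\circ h_Z=h_{Z'}\circ f$ fails in general: naturality of Serre duality identifies the two sides with the functionals $g\mapsto t_Z(g\circ f)$ and $g\mapsto t_{Z'}(f\circ g)$ respectively, and these agree only after a coherent normalization of the classes $h_Z$ across all indecomposables --- exactly the point you wave off as ``the indeterminacy that matters.'' Second, your pairing is not well defined: to extract a ``trace-like scalar'' from $\varphi\circ g\in{\rm Hom}(X,\tau X[1])$ you need a functional on that whole space projecting onto the line $k\cdot h_X$; a priori you only know $h_X$ lies in this space, you do not yet know it is isomorphic to ${\rm D}\,{\rm End}(X)$ (that is what is being proved), and any choice of complement is non-canonical, so the asserted nondegeneracy and naturality --- the actual heart of Reiten--Van den Bergh's argument --- remain unproved. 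The established proof runs in the opposite order: one uses the almost-split triangle to show that ${\rm D}{\rm Hom}(Z,-)$ is representable by $\tau Z[1]$, and then the functor structure on $\mathbb{S}=\tau\circ[1]$ and the naturality of the isomorphisms come formally from Yoneda, rather than being built by hand first. Finally, you also invoke triangles adapted to $\tau^{-1}$ (left almost split behaviour), whereas hypothesis (1) only provides triangles ending at each indecomposable; that an almost-split triangle is automatically left almost split in a Krull--Schmidt category is a separate standard fact that needs to be cited or proved.
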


Now for $\mathcal{T}$ as above, the Auslander-Reiten quiver $\Gamma(\mathcal{T})$ is the quiver whose vertices are the isomorphism classes of indecomposables of $\mathcal{T}$ and arrows taken from a basis for ${\rm Irr}(X, Y)$, the space of equivalence classes of irreducible maps between indecomposables (see e.g. \cite{MR935124} for details). The Auslander-Reiten quiver $\Gamma(\mathcal{T})$ is related to almost-split triangles as follows.
\begin{prop}[Ringel, {\cite[4.8]{MR935124}}]\label{Ringel} Let $X, M, Z$ be indecomposable objects in $\mathcal{T}$ and $X \to Y \to Z \to X[1]$ almost-split, with $Y = \bigoplus_{i=1}^r Y_{i}^{\oplus d_i}$ decomposed into pairwise non-isomorphic indecomposables. Then ${\rm Irr}(X, M) \neq 0$ if and only if $M \cong Y_i$ for some i, in which case $d_i = {\rm dim}\ {\rm Irr}(X, Y_i)$. 
\end{prop}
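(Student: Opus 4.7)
The plan is to exploit the left-almost-split property of $f : X \to Y$, namely that any morphism $X \to M$ which is not a split monomorphism factors through $f$. A preliminary observation is that $f$ itself lies in ${\rm rad}(X, Y)$: if some component $X \to Y_i$ were an isomorphism, composing with the corresponding projection $Y \to Y_i$ and its inverse would retract $f$, contradicting non-splitness of the triangle (equivalently $h \neq 0$).

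For the ``only if'' direction, pick a nonzero irreducible $\phi \in {\rm Irr}(X, M)$ with $M$ indecomposable. Then $\phi$ is in the radical and thus not a split mono, so $\phi = \psi f$ for some $\psi : Y \to M$ by the almost-split property. Irreducibility of $\phi$ forces either $f$ to be a split mono or $\psi$ to be a split epi; the former is ruled out, so $\psi$ splits and exhibits $M$ as a direct summand of $Y$, whence $M \cong Y_i$ for some $i$.

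For the dimension count, fix $i$ and consider the factorization map $\rho_i : {\rm Hom}(Y, Y_i) \to {\rm Hom}(X, Y_i)$, $\psi \mapsto \psi \circ f$. Assuming $X \not\cong Y_i$ (the alternative case is handled by first quotienting out the identity contribution of $f$), every map $X \to Y_i$ is non-split-mono, so $\rho_i$ surjects onto ${\rm Hom}(X, Y_i) = {\rm rad}(X, Y_i)$. The key technical claim is the identification $\rho_i^{-1}({\rm rad}^2(X, Y_i)) = {\rm rad}(Y, Y_i)$: one inclusion is immediate since $f \in {\rm rad}$ and the radical is a two-sided ideal; for the other, write any $\phi = \beta \alpha \in {\rm rad}^2$ with $\alpha : X \to W$, $\beta : W \to Y_i$ both radical, note that $\alpha$ cannot be a split mono (any retraction would force a sum of radical endomorphisms of $X$ to equal $1_X$), factor $\alpha = \alpha' f$, and observe $\beta \alpha' \in {\rm rad}(Y, Y_i)$.

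Passing to quotients, $\rho_i$ then induces an isomorphism ${\rm Hom}(Y, Y_i)/{\rm rad}(Y, Y_i) \cong {\rm Irr}(X, Y_i)$. By Krull-Schmidt the left side decomposes as $\bigoplus_j {\rm Hom}(Y_j, Y_i)^{d_j}/{\rm rad}$; cross-terms for $j \neq i$ vanish and ${\rm End}(Y_i)/{\rm rad} \cong k$ since $k$ is algebraically closed and ${\rm End}(Y_i)$ is a Hom-finite local ring, giving $\dim {\rm Irr}(X, Y_i) = d_i$. I expect the main obstacle to be the careful handling of the radical filtration across $\rho_i$, and in particular the edge case $X \cong Y_i$; once those are disposed of, the rest is bookkeeping via Krull-Schmidt.
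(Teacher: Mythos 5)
Your strategy is the standard one behind the cited result: compose with $f$ and show that $\rho_i\colon \psi \mapsto \psi f$ induces an isomorphism ${\rm Hom}(Y,Y_i)/{\rm rad}(Y,Y_i) \cong {\rm rad}(X,Y_i)/{\rm rad}^2(X,Y_i) = {\rm Irr}(X,Y_i)$, and your ``only if'' direction is fine. But the proof of your key technical claim has a genuine gap. To establish $\rho_i^{-1}({\rm rad}^2(X,Y_i)) \subseteq {\rm rad}(Y,Y_i)$ you must start from an \emph{arbitrary} $\psi\colon Y \to Y_i$ with $\psi f \in {\rm rad}^2$ and show that $\psi$ itself is radical; what your argument actually proves is that every $\phi \in {\rm rad}^2(X,Y_i)$ admits \emph{some} radical preimage $\beta\alpha'$, i.e. ${\rm rad}^2(X,Y_i) \subseteq \rho_i({\rm rad}(Y,Y_i))$. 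That weaker statement only re-proves surjectivity of the induced map on quotients, hence only $\dim {\rm Irr}(X,Y_i) \le d_i$; it gives neither injectivity (so not $d_i \le \dim {\rm Irr}(X,Y_i)$) nor even ${\rm Irr}(X,Y_i) \neq 0$, so the ``if'' half of the equivalence is also left unproved. Nothing in your write-up controls the difference between $\psi$ and the preimage you construct.

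The repair uses the triangle itself, not just the left almost split property of $f$. Given $\psi$ with $\psi f \in {\rm rad}^2(X,Y_i)$, your factorization argument (summed over the finitely many terms $\beta\alpha$ of a ${\rm rad}^2$-decomposition) produces $\chi \in {\rm rad}(Y,Y_i)$ with $\psi f = \chi f$. Then $(\psi - \chi)f = 0$, so applying ${\rm Hom}(-,Y_i)$ to the triangle gives $\psi - \chi = \mu g$ for some $\mu\colon Z \to Y_i$, where $g\colon Y \to Z$. Since $Z$ is indecomposable and $h \neq 0$, no component of $g$ is an isomorphism (otherwise $g$ is a split epimorphism and $h = hgs = 0$), so $g \in {\rm rad}(Y,Z)$ and $\psi = \chi + \mu g \in {\rm rad}(Y,Y_i)$. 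This exactness step is the triangulated surrogate for left minimality of $f$ and is exactly what is missing. Two smaller points: your edge case $X \cong Y_i$ needs no special treatment (and ``quotienting out the identity contribution'' is not a precise operation) because one only needs $\rho_i$ to surject onto ${\rm rad}(X,Y_i)$, and every radical map between indecomposables is a non-split mono, hence factors through $f$; and the left almost split property of $f$ that you take as the starting point is itself a lemma, since the paper's definition of almost-split triangle only imposes the condition at $Z$ --- standard, but worth acknowledging.
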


We need one more example of a triangulated category $\mathcal{T}$ with a Serre functor for our purposes. 

\begin{prop}[Happel, \cite{MR935124}] Let $A$ be a finite-dimensional $k$-algebra of finite global dimension. Then
${\rm D}^b(A)$ is a Krull-Schmidt Hom-finite triangulated category, and the functor $\mathbb{S} = \omega_A \otimes^{\mathbb{L}}_A -$ is a Serre functor where $\omega_A = DA$, with inverse $\mathbb{S}^{-1} = {\rm RHom}_A(\omega_A, -)$. Hence the category ${\rm D}^b(A)$ admits almost-split triangles.
\end{prop}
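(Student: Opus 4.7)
The plan is to separate the statement into three pieces: Hom-finiteness, Krull-Schmidt, and the existence of the Serre functor (from which the existence of almost-split triangles is automatic by the Reiten--Van den Bergh proposition stated just above). I would handle the first two by soft arguments and concentrate effort on constructing $\mathbb{S}$.

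For Hom-finiteness, the first step is to note that since $A$ is finite-dimensional and of finite global dimension, every finitely generated $A$-module admits a finite resolution by finitely generated projectives, and therefore the inclusion $K^b(\mathrm{proj}\,A) \hookrightarrow {\rm D}^b(A)$ is an equivalence. Then for bounded complexes $X, Y$ of finitely generated projectives, $\mathrm{Hom}_{{\rm D}^b(A)}(X, Y)$ is a subquotient of the total Hom complex, each term $\mathrm{Hom}_A(X^p, Y^q)$ being finite-dimensional over $k$. For Krull-Schmidt, the endomorphism ring of any object is then a finite-dimensional $k$-algebra, hence semiperfect, so idempotents split and indecomposables have local endomorphism rings.

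For the Serre functor, I take $\omega_A = DA$ as an $A$-bimodule and set $\mathbb{S} = \omega_A \otimes^{\mathbb{L}}_A -$. To see that $\mathbb{S}$ preserves ${\rm D}^b(A)$, I use that the Nakayama functor $\nu = \omega_A \otimes_A -$ sends a finitely generated projective $P$ to an injective module, and restricts to an equivalence $\mathrm{proj}\,A \xrightarrow{\cong} \mathrm{inj}\,A$ with quasi-inverse $\mathrm{Hom}_A(\omega_A, -)$. Under finite global dimension both $K^b(\mathrm{proj}\,A)$ and $K^b(\mathrm{inj}\,A)$ identify with ${\rm D}^b(A)$, so $\mathbb{S}$ lifts to an autoequivalence with inverse $\mathbb{S}^{-1} = {\rm RHom}_A(\omega_A, -)$. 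For the duality isomorphism, I would reduce by dévissage along distinguished triangles and compatibility with the standard $t$-structure to the case of two finitely generated projectives $P, Q$, where it is the composite
\[
\mathrm{Hom}_A(Q, \omega_A \otimes_A P) \;\cong\; D\bigl( P^{\vee} \otimes_A Q \bigr) \;\cong\; D\,\mathrm{Hom}_A(P, Q),
\]
using tensor-Hom adjunction against $k$ together with the natural isomorphism $\mathrm{Hom}_A(P, Q) \cong P^{\vee}\otimes_A Q$ (where $P^{\vee} = \mathrm{Hom}_A(P, A)$), which is clear when $P$ is a summand of $A$ and extends additively. Naturality in $P$ and $Q$ then gives a natural isomorphism of functors on all of $K^b(\mathrm{proj}\,A) \simeq {\rm D}^b(A)$.

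The main obstacle is verifying boundedness of $\mathbb{S}$, equivalently that $DA$ has finite projective dimension as an $A$-module; this is where finite-dimensionality of $A$ genuinely enters, since it ensures $DA$ is a finitely generated $A$-module, to which finite global dimension then applies. Once this is in hand, the Nakayama equivalence and the additive duality on projectives supply the Serre functor formally, and the last clause about almost-split triangles follows from the Reiten--Van den Bergh proposition since ${\rm D}^b(A)$ is Krull-Schmidt, Hom-finite and admits a Serre functor.
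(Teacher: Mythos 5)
Your proposal is correct and follows essentially the same route as the source the paper relies on: the paper states this proposition as background, citing Happel without proof, and the standard proof there is exactly your argument via $K^b(\mathrm{proj}\,A)\simeq {\rm D}^b(A)$, the Nakayama functor $\nu = DA\otimes_A -$ with its quasi-inverse $\mathrm{Hom}_A(DA,-)$, the duality $\mathrm{Hom}_A(Q,\omega_A\otimes_A P)\cong D\,\mathrm{Hom}_A(P,Q)$ on projectives extended to complexes, and then Reiten--Van den Bergh for almost-split triangles. The only loose point is the claim that semiperfectness of endomorphism rings by itself yields splitting of idempotents: Krull--Schmidt needs idempotent completeness of ${\rm D}^b(A)$ as a separate (standard) input, e.g.\ because $K^b(\mathrm{proj}\,A)$ is a thick, summand-closed subcategory of ${\rm D}(A)$, or because ${\rm D}^b(A)$ carries a bounded $t$-structure.
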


We will only care about the hereditary case gldim $A = 1$. For $M$ is an $A$-module without projective summand (resp. injective summand), we have $\tau^{-1} M = \mathbb{S}^{-1}(M)[1] \cong {\rm Ext}^1_A(DA, M)$ (resp. $\tau M = \mathbb{S}(M)[-1] \cong {\rm Tor}^A_1(DA, M)$), and so $\tau$ as defined on ${\rm D}^b(A)$ agrees with the usual definition as in \cite{MR2197389}, \cite{MR1476671}. Moreover Happel has shown in \cite[4.7]{MR935124} that in this case, an almost-split short exact sequence of $A$-modules $0 \to X \to Y \to Z \to 0$ gives rise to an almost-split triangle $X \to Y \to Z \to X[1]$ in ${\rm D}^b(A)$. We will implicitly interpret any needed result as living in ${\rm D}^b(A)$.

\subsection{The Four Subspace problem}\label{foursubspace}

The Four Subspace problem asks for a classification of normal forms of possible subspace configurations $(W; V_1, V_2, V_3, V_4)$ with $V_i \subseteq W$, up to change of $W$-basis. As we can take direct sums of such data, one asks for a classification of indecomposable subspace configurations. Let $Q$ be the $\widetilde{D}_4$ quiver oriented as follows
\[
\xymatrix@R1.5pc@C1.8pc{ 1 \ar[drr] & 2 \ar[dr] & & 3 \ar[dl] & 4 \ar[dll] \\
					 & 		   & 0 & 
}
\]
A representation $M$ of $Q$ is given by linear maps
\[
\xymatrix@R1.5pc@C1.8pc{ V_1 \ar_{\varphi_1}[drr] & V_2 \ar^{\varphi_2}[dr] & & V_3 \ar_{\varphi_3}[dl] & V_4 \ar^{\varphi_4}[dll] \\
				 & 	   & W & 
 }
\]
and subspace configurations correspond to representations of $Q$ with all $\varphi_i$ injective. If a representation $M$ is indecomposable then either all $\varphi_i$ are injective or $M$ is one of the simple representations $S(i)$ for $i = 1,2,3,4$:
\[
\xymatrix@R1pc@C.7pc{ k \ar[drr] & 0 \ar[dr] & & 0 \ar[dl] & 0 \ar[dll] & 0 \ar[drr] & k \ar[dr] & & 0 \ar[dl] & 0 \ar[dll] & 0 \ar[drr] & 0 \ar[dr] & & k \ar[dl] & 0 \ar[dll] & 0 \ar[drr] & 0 \ar[dr] & & 0 \ar[dl] & k \ar[dll]\\
		  & & 0 &  & & & & 0 & & & & & 0 & & & & & 0 & &
}
\]
Representations of $Q$ correspond naturally to left modules over the path algebra $A = kQ$. The algebra $A$ is hereditary, and since the obstructions to formality of a complex of modules $M^{\bullet}$ live in ${\rm Ext}_A^{n, 1-n}({\rm H}^*(M^{\bullet}), {\rm H}^*(M^{\bullet}))$ for $n \geq 2$, complexes in ${\rm D}(A)$ are formal and the indecomposables are of the form $M[n]$ for $M$ an indecomposable $A$-module and $n \in \mathbb{Z}$ (see \cite[Chp. 4]{MR2385175} or \cite{MR935124} for a direct proof of formality).

Nazarova and Gelfand-Ponomarev gave the first classification of indecomposable subspace configurations in \cite{MR0223352}, \cite{MR0338018}, \cite{MR0357428}. Auslander-Reiten theory can be used to give an efficient treatment of the classification. We quote the needed results from \cite{MR1476671} and \cite{MR935124}. 

Each indecomposable complex lives in one of two types of components of the Auslander-Reiten quiver of ${\rm D}^{\sf b}(A)$:

\underline{Preprojective-preinjective components}: 

First, the indecomposable projective modules $P(i)$ for $i = 0, 1,2,3,4$ are given by
\[
\xymatrix@R1pc@C.7pc{0 \ar[drr] & 0 \ar[dr] & & 0 \ar[dl] & 0 \ar[dll] & k \ar_{1}[drr] & 0 \ar[dr] & & 0 \ar[dl] & 0 \ar[dll] & 0 \ar[drr] & k \ar^{1}[dr] & & 0 \ar[dl] & 0 \ar[dll] & 0 \ar[drr] & 0 \ar[dr] & & k \ar_{1}[dl] & 0 \ar[dll] & 0 \ar[drr] & 0 \ar[dr] & & 0 \ar[dl] & k \ar^{1}[dll]\\
			&	& k &  & & & & k &  & & & & k & & & & & k & & & & & k &
}
\]
and the indecomposable injective modules $I(i)$ for $i = 0,1,2,3,4$ by
\[
\xymatrix@R1pc@C.7pc{k \ar_{1}[drr] & k \ar^{1}[dr] & & k \ar_{1}[dl] & k \ar^{1}[dll] & k \ar[drr] & 0 \ar[dr] & & 0 \ar[dl] & 0 \ar[dll] & 0 \ar[drr] & k \ar[dr] & & 0 \ar[dl] & 0 \ar[dll] & 0 \ar[drr] & 0 \ar[dr] & & k \ar[dl] & 0 \ar[dll] & 0 \ar[drr] & 0 \ar[dr] & & 0 \ar[dl] & k \ar[dll]\\
			&	& k &  & & & & 0 &  & & & & 0 & & & & & 0 & & & & & 0 &
}
\]
Let $\mathcal{PI}$ stand for the connected component of $\Gamma({\rm D}^b(A))$ containing $P(i)$ for $i=0,1,2,3,4$. According to \cite[I.5]{MR935124} this takes the form
\[
\xymatrix@R.8pc@C1.3pc{
& \tau I(1)[-1] \ar[ddr] && I(1)[-1] \ar[ddr] && P(1) \ar[ddr] && \tau^{-1}P(1) \ar[ddr]\\
& \tau I(2)[-1] \ar[dr] && I(2)[-1] \ar[dr] && P(2)  \ar[dr] &&  \tau^{-1}P(2)  \ar[dr]\\
\cdots \ar[uur] \ar[ur] \ar[dr] \ar[ddr] && I(0)[-1] \ar[uur] \ar[ur] \ar[dr] \ar[ddr] && P(0)\ar[uur] \ar[ur] \ar[dr] \ar[ddr] && \tau^{-1}P(0)\ar[uur] \ar[ur] \ar[dr] \ar[ddr] && \cdots \\
& \tau I(3)[-1] \ar[ur] && I(3)[-1] \ar[ur] && P(3) \ar[ur] && \tau^{-1}P(3) \ar[ur]\\
& \tau I(4)[-1] \ar[uur] && I(4)[-1] \ar[uur] && P(4) \ar[uur] && \tau^{-1}P(4) \ar[uur]\\
}
\]

We let $\mathcal{PI}[n]$ be the $n^{th}$ suspension of $\mathcal{PI}$ for $n \in \Z$. These components contain precisely all modules of the form $\tau^m P(i)[n]$ for $m, n \in \Z$, since we have $I(i)[-1] \cong \tau P(i)$.

\underline{Regular components}:

Modules whose indecomposable summands are not of the previous type are said to be regular. Let us recall a well-known property of regular modules.
\begin{prop}\label{serial}
The category $\mathcal{R}(Q)$ of regular modules is an abelian subcategory of $kQ$-mod, and is serial in that every indecomposable has a unique composition series with simple regular factors.
\end{prop}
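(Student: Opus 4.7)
The plan is to derive this from the general structure theory of tame hereditary algebras, since $Q = \widetilde{D}_4$ is a paradigmatic Euclidean quiver; the result is originally due to Dlab-Ringel and treated systematically in \cite[Section 3.1]{MR935124}. The key tool is the \emph{defect} $\partial : K_0(kQ) \to \mathbb{Z}$ defined by $\partial(M) = \langle \delta, \underline{\dim}\, M \rangle$, where $\delta$ is the minimal positive imaginary root of $Q$ (the generator of the radical of the Tits form). One first shows that an indecomposable $M$ is preprojective, preinjective, or regular according to whether $\partial M$ is $<0$, $>0$, or $=0$, matching the $\tau$-orbit description given above.

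The first step proper is to verify that $\mathcal{R}(Q)$ is closed under kernels, images, and cokernels in $kQ$-mod. The crucial input is the Hom-vanishing ${\rm Hom}(\mathcal{I}, \mathcal{R}) = 0 = {\rm Hom}(\mathcal{R}, \mathcal{P})$, which is a standard consequence of the Coxeter transformation analysis together with Auslander-Reiten duality. Given a short exact sequence $0 \to A \to B \to C \to 0$ with $B$ regular, these vanishings forbid preinjective summands in $A$ and preprojective summands in $C$; since the defect is additive on short exact sequences, $\sum_i \partial A_i + \sum_j \partial C_j = 0$ with $\partial A_i \leq 0$ and $\partial C_j \geq 0$, forcing all individual defects to vanish and placing $A, C$ in $\mathcal{R}(Q)$. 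Applied to the two standard short exact sequences associated to a morphism $f : M \to N$ between regulars, this yields that $\ker f$, ${\rm im}\, f$ and ${\rm coker}\, f$ are all regular, so $\mathcal{R}(Q)$ is abelian.

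For seriality I would invoke Ringel's tube theorem: each connected component of the Auslander-Reiten quiver of $\mathcal{R}(Q)$ is a stable translation quiver on which $\tau$ acts freely, and a Riedtmann-style analysis of its tree class forces the shape of a tube $\mathbb{Z}A_\infty / \langle \tau^r \rangle$ for some $r \geq 1$. In such a tube the \emph{quasi-simple} objects at the mouth are precisely the simple objects of the abelian category $\mathcal{R}(Q)$, and every other indecomposable sits atop a unique chain of inclusions from a quasi-simple built by iterated nontrivial extensions, yielding the unique composition series with quasi-simple factors. The principal obstacle is the tube classification itself, for which I would defer to \cite[Chapter 3]{MR935124}; the $\widetilde{D}_4$ case will also be made transparent in section 3 via the pencil of conics.
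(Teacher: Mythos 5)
The paper does not prove this proposition at all: it is stated as a recalled, well-known fact, quoted from \cite{MR1476671} and \cite{MR935124}, so there is no in-paper argument to match yours against. Your sketch is the standard Dlab--Ringel/Ringel argument and is essentially correct. The defect you use, $\partial(M)=\langle\delta,\underline{\dim}\,M\rangle=-2d_0+\sum_{i=1}^4 d_i$ for this orientation, is exactly the one the paper itself invokes later (via \cite{MR2360503}) in the proof of Theorem \ref{maincalc}, and your combination of the vanishings ${\rm Hom}(\mathcal{I},\mathcal{R})=0={\rm Hom}(\mathcal{R},\mathcal{P})$ with additivity of the defect correctly yields closure of $\mathcal{R}(Q)$ under kernels, images and cokernels; you may as well record closure under extensions too (same two vanishings applied to a summand of the middle term), which is what makes $\mathcal{R}(Q)$ an exact abelian subcategory in the strongest sense. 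The one caveat is that the second half of the statement --- seriality, which is the actual content of the proposition --- is not argued but delegated wholesale to the tube theorem: that regular components are stable tubes $\mathbb{Z}A_\infty/\langle\tau^r\rangle$, that the quasi-simples at the mouth are the simple objects of $\mathcal{R}(Q)$, and that the regular socle filtration is the unique composition series. Since the paper treats the entire proposition as a citation, this level of deferral is acceptable, but you should be clear that you have reduced the claim to Ringel's classification rather than derived it; also, the systematic source for the tube structure is Ringel's book on tame algebras or \cite{MR2360503}, rather than Section 3.1 of \cite{MR935124}.
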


We classify the indecomposable regular modules as follows. First, consider representations of the form
\[
\xymatrix@R1.4pc@C1.4pc{ k \ar_{\varphi_1}[drr] & k \ar^{\varphi_2}[dr] & & k \ar_{\varphi_3}[dl] & k \ar^{\varphi_4}[dll] \\
				 & 	   & k^2 & 
 }
\]
with each $\varphi_i \neq 0$, which represent $4$-tuples of lines in $k^2$ through the origin, or $4$-tuples of points on $\mathbb{P}^1$. Assume that the first $3$ points are distinct, so that up to projective transformation these points can be taken as \mbox{$\big( [0:1], [1:1], [1:0], [t_0:t_1] \big)$} with $t = [t_0: t_1] \in \mathbb{P}^1$. Let $R_{t}$ be given by
\[
\xymatrix@R2.0pc@C2.3pc{ k \ar_{\begin{bsmallmatrix} 0 \\1 \end{bsmallmatrix}}[drr] & k \ar^{\begin{bsmallmatrix}1\\1\end{bsmallmatrix}}[dr] & & k \ar_{\begin{bsmallmatrix}1 \\ 0 \end{bsmallmatrix}}[dl] & k \ar^{\begin{bsmallmatrix}t_0 \\ t_1 \end{bsmallmatrix}}[dll] \\
				 & 	   & k^2 & 
 }
\]
We call the corresponding family of representations $\mathcal{R} = \{ R_{t} \}_{t \in \mathbb{P}^1}$ the {\bf fundamental family}. 

\begin{prop} We have ${\rm End}(R_t) = k$, hence the representation $R_t$ is indecomposable for every $t \in \mathbb{P}^1$.
\end{prop}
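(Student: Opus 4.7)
The plan is to unpack what an endomorphism of $R_t$ looks like and show that the constraints force it to be a scalar. An endomorphism consists of scalars $\lambda_1, \lambda_2, \lambda_3, \lambda_4 \in k$ (since each leaf vector space is one-dimensional) together with a linear map $A: k^2 \to k^2$, subject to the intertwining relations $A \varphi_i = \lambda_i \varphi_i$ for $i = 1, 2, 3, 4$. Equivalently, $A$ must preserve each of the four lines in $k^2$ spanned by $\varphi_1, \varphi_2, \varphi_3, \varphi_4$, acting on the $i$-th line by $\lambda_i$.

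First I would focus on the three lines spanned by $\varphi_1 = \begin{bsmallmatrix} 0 \\ 1 \end{bsmallmatrix}$, $\varphi_2 = \begin{bsmallmatrix} 1 \\ 1 \end{bsmallmatrix}$, $\varphi_3 = \begin{bsmallmatrix} 1 \\ 0 \end{bsmallmatrix}$, which are three pairwise distinct lines in $k^2$. Since $\varphi_1$ and $\varphi_3$ are eigenvectors of $A$ forming a basis of $k^2$, we have $A = \mathrm{diag}(\lambda_3, \lambda_1)$ in this basis. The condition that $\varphi_2 = \varphi_1 + \varphi_3$ is also an eigenvector then yields $\lambda_3 \varphi_3 + \lambda_1 \varphi_1 = \lambda_2 (\varphi_1 + \varphi_3)$, forcing $\lambda_1 = \lambda_2 = \lambda_3 =: \lambda$ and hence $A = \lambda \cdot \mathrm{id}_{k^2}$.

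Having reduced $A$ to a scalar, the remaining condition $A \varphi_4 = \lambda_4 \varphi_4$ becomes $\lambda \varphi_4 = \lambda_4 \varphi_4$, which since $\varphi_4 \neq 0$ gives $\lambda_4 = \lambda$. Thus every endomorphism is determined by the single scalar $\lambda$, so $\mathrm{End}(R_t) \cong k$. Since the endomorphism ring is a field, in particular local, $R_t$ is indecomposable.

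There is no real obstacle here; the only thing to be careful about is that the argument uses in an essential way that the first three of the four points $[0{:}1], [1{:}1], [1{:}0], [t_0{:}t_1]$ on $\mathbb{P}^1$ are pairwise distinct, which is precisely what lets us rigidify $A$ before even looking at the fourth constraint. The fourth point $t \in \mathbb{P}^1$ is then arbitrary, consistent with the family $\mathcal{R} = \{R_t\}_{t \in \mathbb{P}^1}$ being a genuine one-parameter family of pairwise non-isomorphic indecomposables.
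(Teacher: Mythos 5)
Your proof is correct, and it is the natural direct verification: the paper states this proposition without proof (it is quoted as part of the standard $\widetilde{D}_4$/four-subspace background), and your eigenvector argument --- using that the three distinct lines $\varphi_1,\varphi_2,\varphi_3$ already force the map on $k^2$ to be scalar, after which $\varphi_4\neq 0$ pins down $\lambda_4$ --- is exactly the computation one would write down. The only cosmetic quibble is the ordering of the diagonal entries of $A$ relative to the chosen basis, which does not affect the argument.
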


Let $\mathcal{T}_{t}^Q$ stand for the Auslander-Reiten component containing $R_{t}$. We will describe them in detail.

 \underline{Case $t \in \mathbb{P}^1 \setminus \{0, 1, \infty \}$}:
 
 In this case all lines are distinct. We first describe a family of representations $R_{t}\langle r \rangle$ for $r \geq 1$.
 \begin{prop}\label{R_t} The following holds.
 \begin{enumerate}[i.]
 \item There are unique non-trivial extensions
 \[
0 \to R_{t} \to R_{t}\langle 2 \rangle \to R_{t} \to 0
\]
\[
0 \to R_{t}\langle r \rangle \to R_{t} \langle r+1 \rangle \to R_{t} \to 0
\]

and $R_{t}\langle r \rangle$ are indecomposable for all $r \geq 2$.
\item Setting $R_{t}\langle 1 \rangle = R_{t}$, we have $\tau \big( R_{t} \langle r \rangle \big) \cong R_{t} \langle r \rangle$ for all $r \geq 1$.
 \end{enumerate}
 \end{prop}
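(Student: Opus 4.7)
The proof rests on the Euler form of $A = kQ$: since $\delta := \underline{\dim}\,R_t = (2;1,1,1,1)$ is the minimal positive imaginary root of $\widetilde{D}_4$, we have $\langle \delta, \delta\rangle_Q = 2^2 + 4 - 8 = 0$. Combined with $\mathrm{End}(R_t) = k$ from the previous proposition, this immediately yields $\dim\mathrm{Ext}^1_A(R_t, R_t) = 1$, giving a unique non-trivial extension which we take to be $R_t\langle 2\rangle$. For the inductive step of (i), I would apply $\mathrm{Hom}(R_t, -)$ to $0 \to R_t\langle r-1\rangle \to R_t\langle r\rangle \to R_t \to 0$: the connecting homomorphism $\mathrm{Hom}(R_t, R_t) \to \mathrm{Ext}^1(R_t, R_t\langle r-1\rangle)$ sends the identity to the non-trivial extension class and is therefore an isomorphism of $1$-dimensional spaces, inductively establishing $\dim\mathrm{Hom}(R_t, R_t\langle r\rangle) = \dim\mathrm{Ext}^1(R_t, R_t\langle r\rangle) = 1$. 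The unique non-trivial class then defines $R_t\langle r+1\rangle$ up to isomorphism.

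For (ii), I would prove $\tau R_t \cong R_t$ by computing the Auslander-Reiten translate directly. Since the vectors $\varphi_1, \ldots, \varphi_4$ span $V_0 = k^2$ (any two distinct lines do), the top of $R_t$ is $\bigoplus_{i=1}^4 S(i)$, and $A$ being hereditary, the minimal projective presentation reads
\[
0 \to P(0)^{\oplus 2} \xrightarrow{f_t} \bigoplus_{i=1}^4 P(i) \to R_t \to 0
\]
with $f_t$ encoded by a $4 \times 2$ matrix whose columns span the kernel of $[\varphi_1 | \cdots | \varphi_4]: k^4 \to k^2$. Applying the Nakayama functor $\nu = D\mathrm{Hom}_A(-, A)$ identifies $\tau R_t$ with $\ker(\nu f_t: I(0)^{\oplus 2} \to \bigoplus_{i=1}^4 I(i))$; using $I(i) = S(i)$ for $i = 1, \ldots, 4$ while $\underline{\dim}\,I(0) = (1;1,1,1,1)$ with identity structure maps, a direct inspection of the kernel realizes a subspace configuration with the same cross-ratio as $(\varphi_1, \ldots, \varphi_4)$, giving $\tau R_t \cong R_t$. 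The inductive step $\tau R_t\langle r+1\rangle \cong R_t\langle r+1\rangle$ then follows by applying $\tau$ to the defining extension together with the five lemma.

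The main technical obstacle is indecomposability of $R_t\langle r+1\rangle$. My approach is to show $\mathrm{End}(R_t\langle r+1\rangle) \cong k[x]/(x^{r+1})$, which is local. By Auslander-Reiten duality and the established $\tau$-invariance, $\dim\mathrm{Ext}^1(R_t\langle r+1\rangle, R_t\langle r+1\rangle) = \dim\mathrm{End}(R_t\langle r+1\rangle)$, and combined with the Euler form identity $\langle (r+1)\delta, (r+1)\delta\rangle_Q = 0$ and a double induction establishing $\dim\mathrm{Hom}(R_t\langle k\rangle, R_t\langle m\rangle) = \min(k,m)$, one obtains $\dim\mathrm{End}(R_t\langle r+1\rangle) = r+1$. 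A distinguished nilpotent generator $x$ arises as the composite $R_t\langle r+1\rangle \twoheadrightarrow R_t\langle r+1\rangle/R_t\langle 1\rangle \cong R_t\langle r\rangle \hookrightarrow R_t\langle r+1\rangle$, where the middle isomorphism uses the uniqueness in (i) applied to the quotient filtration. Nilpotence with $x^{r+1} = 0$ is manifest from the filtration depth, and the powers $1, x, \ldots, x^r$ are linearly independent by the dimension count, yielding the claimed local endomorphism ring and hence indecomposability.
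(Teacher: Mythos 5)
The paper gives no proof of this proposition: it is quoted from the standard references on tame hereditary algebras (Auslander--Reiten--Smal{\o}, Ringel, Happel), so you are supplying an argument where the paper cites the literature. Your treatment of (i) and (ii) is essentially the standard one and is correct. The Euler form computation $\langle\delta,\delta\rangle=0$ together with $\mathrm{End}(R_t)=k$ gives $\dim\mathrm{Ext}^1_A(R_t,R_t)=1$; the long-exact-sequence induction yielding $\dim\mathrm{Hom}(R_t,R_t\langle r\rangle)=\dim\mathrm{Ext}^1(R_t,R_t\langle r\rangle)=1$ is sound, since the connecting map carries $\mathrm{id}_{R_t}$ to the (nonzero) defining class and is therefore an isomorphism of one-dimensional spaces; and the Nakayama-functor computation of $\tau R_t$ is right: the minimal presentation $0\to P(0)^{\oplus 2}\to\bigoplus_i P(i)\to R_t\to 0$ is correct, and the kernel of $\nu f_t$ is the configuration of the four lines $K\cap\{x_i=0\}$ inside $K=\ker[\varphi_1|\cdots|\varphi_4]$, whose cross-ratio a short explicit computation confirms to equal that of $(\varphi_1,\dots,\varphi_4)$, so $\tau R_t\cong R_t$. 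For the inductive step of (ii) the correct mechanism is not the five lemma (you have no map of extensions to compare) but the uniqueness of the non-split class: the Serre functor is exact, sends the defining triangle to a non-split extension of $R_t$ by $R_t\langle r\rangle$, and one-dimensionality of $\mathrm{Ext}^1(R_t,R_t\langle r\rangle)$ then forces the middle term to be $R_t\langle r+1\rangle$; your argument is easily repaired to say this.

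The genuine gap is in the indecomposability part, which is itself part of the statement. Two assertions carry all the weight and are not justified as written. First, $\dim\mathrm{Hom}(R_t\langle k\rangle,R_t\langle m\rangle)=\min(k,m)$ is announced ``by double induction'', but the induction only closes if you can identify the relevant connecting homomorphisms, and that is exactly where the content lies. Second, and more seriously, the identification $R_t\langle r+1\rangle/R_t\langle 1\rangle\cong R_t\langle r\rangle$ is claimed to follow from ``uniqueness in (i)'', but (i) only provides the filtration with $R_t\langle r\rangle$ as a submodule and $R_t$ as the top quotient; it says nothing about the quotient by the bottom term $R_t\langle 1\rangle$. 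What you need is the re-bracketing statement that $R_t\langle r+1\rangle$ is also the unique non-split extension $0\to R_t\to R_t\langle r+1\rangle\to R_t\langle r\rangle\to 0$, i.e.\ that $\dim\mathrm{Ext}^1(R_t\langle r\rangle,R_t)=1$ and that your module realizes the nonzero class there; this requires its own induction (for instance using the symmetry $\dim\mathrm{Hom}(M,N)=\dim\mathrm{Hom}(N,M)$, which follows from the vanishing Euler form combined with Auslander--Reiten duality and the $\tau$-invariance you proved). Once that is in place, both the $\min(k,m)$ formula and the shift endomorphism $x$ exist, $x^r\neq 0$ gives the linear independence of $1,x,\dots,x^r$ (this is what yields independence, not the dimension count), and $\mathrm{End}(R_t\langle r\rangle)\cong k[x]/(x^{r})$ is local. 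So the skeleton is the standard, completable one, but as written the locality of the endomorphism ring, hence indecomposability, is not yet proved.
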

 
Following Prop \ref{serial}, the module $R_t$ is the unique simple regular module in its component and $r$ denotes the length of the uniserial module $R_t\langle r \rangle$. The component $\mathcal{T}^{Q}_{t}$ looks like:
 \[
 \xymatrix@R.8pc@C.8pc{
 \vdots \ar[dr] \ar@{-}[dd] && \vdots \ar@{-}[dd]\\
 & R_{t}\langle 4 \rangle \ar[dr] \ar[ur] & \\
 R_{t}\langle 3 \rangle \ar[dr] \ar[ur] \ar@{.}[rr] \ar@{-}[dd] && R_{t}\langle 3 \rangle \ar@{-}[dd] \\
 & R_{t} \langle 2 \rangle \ar[dr] \ar[ur] & \\
 R_{t} \ar[ur] \ar@{.}[rr] && R_{t}
 }
 \]
 
 where the edges are identified, and $\mathcal{T}_{t}^Q$ forms a tube of rank one.
 
\underline{Case $t = 0, 1, \infty$}:

Consider again  $4$-tuples of lines given by $\big( \varphi_1, \varphi_2, \varphi_3, \varphi_4 \big)$ as above, and now assume that exactly two of the lines collide. This yields a partition of $\{\varphi_1, \varphi_2, \varphi_3, \varphi_4 \} = \{\varphi_i, \varphi_j \} \coprod \{\varphi_p, \varphi_q \}$ where $\varphi_i = \varphi_j$ and $\varphi_p \neq \varphi_q$. Keeping track of ordering, there are $\binom{4}{2} = 6$ such partitions. Let us define corresponding representations as
\begin{align*}
R_0^{+}: \big( [0\ 1]^t, [1\ 1]^t, [1\ 0]^t, [0\ 1]^t \big) && R_0^{-}: \big([1\ 1]^t, [0\ 1]^t, [0\ 1]^t, [1\ 0]^t \big)\\
R_1^{+}: \big( [0\ 1]^t, [1\ 1]^t, [1\ 0]^t, [1\ 1]^t \big) && R_1^{-}: \big( [1\ 1]^t, [0\ 1]^t, [1\ 1]^t, [1\ 0]^t \big)\\
R_{\infty}^{+}: \big( [0\ 1]^t, [1\ 1]^t, [1\ 0]^t, [1\ 0]^t \big) && R_{\infty}^{-}: \big( [1\ 0]^t, [1\ 0]^t, [0\ 1]^t, [0\ 1]^t \big)
\end{align*}
We have chosen this normalization with the following properties in mind:
\begin{enumerate}[i.]
\item For $t = 0, 1, \infty$, we have $R_{t} = R_{t}^{+}$.
\item The involution $R_{t}^{+} \leftrightarrow R_{t}^{-}$ corresponds to interchanging $\{\varphi_i, \varphi_j\}$ and $\{\varphi_p, \varphi_q \}$.
\end{enumerate}

\begin{prop} The following hold.
\begin{enumerate}[i.]
\item The representations $R_{t}^{\pm}$ are pairwise non-isomorphic and indecomposable.
\item We have $\tau \big( R_{t}^{+} \big) \cong R_{t}^{-}$ and $\tau \big( R_{t}^{-} \big) \cong R_{t}^{+}$.
\end{enumerate}
\end{prop}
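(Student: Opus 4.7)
To prove (i), my plan is to directly compute the endomorphism ring of each $R_t^{\pm}$ and show it equals $k$, and then to distinguish them by an intrinsic combinatorial invariant. Any endomorphism of $R_t^{\pm}$ is a tuple $(f_0; f_1, f_2, f_3, f_4)$ with $f_0 \in \mathrm{End}_k(k^2)$ and $f_i \in k$, subject to $f_0 \circ \varphi_i = \varphi_i \circ f_i$ for each $i$. This forces $f_0$ to preserve each line $\mathrm{im}(\varphi_i) \subset k^2$ and act on it as scalar multiplication by $f_i$. The normalizations are arranged so that at least three of the four lines $\mathrm{im}(\varphi_i)$ are pairwise distinct, and a linear endomorphism of $k^2$ fixing three distinct lines must be a scalar $\lambda \cdot \mathrm{id}$; the $f_i = \lambda$ are then forced. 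So $\mathrm{End}(R_t^{\pm}) = k$, which is local, hence $R_t^{\pm}$ is indecomposable. For pairwise non-isomorphism, the datum "which two-element subset $\{i,j\} \subset \{1,2,3,4\}$ satisfies $\mathrm{im}(\varphi_i) = \mathrm{im}(\varphi_j)$" is an intrinsic invariant preserved under isomorphism (any isomorphism restricts to a $GL_2$ automorphism at the central vertex, which carries line-coincidences to line-coincidences), and the six chosen normalizations realize distinct such invariants.

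For (ii), my plan is to compute $\tau(R_t^+)$ directly from a minimal projective presentation via the Nakayama functor $\nu = D\mathrm{Hom}_A(-, A)$, which sends $P(i)$ to $I(i)$. Since each $\varphi_i$ is injective and the $\mathrm{im}(\varphi_i)$ span $k^2$, the radical of $R_t^+$ is all of $k^2$ at vertex $0$ and zero elsewhere, so $\mathrm{top}(R_t^+) = \bigoplus_{i=1}^4 S(i)$. Thus the minimal projective cover is $P_0 = \bigoplus_{i=1}^4 P(i)$ of dimension vector $(1,1,1,1;4)$, and the kernel has dimension vector $(0,0,0,0;2) = \dim S(0)^2$, giving $P_1 = P(0)^2$. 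The differential $P(0)^2 \to P_0$ is encoded at vertex $0$ by a $4 \times 2$ matrix $M$ whose columns span $\ker\big(\bigoplus \varphi_i: k^4 \to k^2\big)$. Applying $\nu$ produces the map $I(0)^2 \to \bigoplus_{i=1}^4 I(i)$ represented by the same matrix $M$, whose kernel equals $\tau(R_t^+)$ as $R_t^+$ has no projective summand. Reading off this kernel at each vertex yields a configuration of four lines in $k^2$ which, after a change of basis at the central vertex, matches the normalized form of $R_t^-$.

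For the reverse direction $\tau(R_t^-) \cong R_t^+$, I would either repeat the analogous computation with the roles of $+$ and $-$ interchanged, or appeal to the structural fact that $R_t^+$ and $R_t^-$ are regular indecomposables of null-root dimension vector $\delta = (1,1,1,1;2)$, hence lie in one of the three exceptional rank-$2$ tubes of $\widetilde{D}_4$ whose $\tau$-action has order $2$, so $\tau(R_t^+) \cong R_t^-$ forces $\tau(R_t^-) \cong R_t^+$. The hard part will be the explicit identification of $\tau(R_t^+)$ with $R_t^-$: the kernel produced by the Nakayama-dual map will generically land in a non-standard basis, so extracting the $GL_2(k)$ change of basis at the central vertex that brings the resulting four-line configuration into the prescribed normal form of $R_t^-$ requires a small but careful calculation in each of the three cases $t = 0, 1, \infty$. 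The structural argument via rank-$2$ tubes is slicker but presupposes the classification of AR components for tame hereditary $\widetilde{D}_4$.
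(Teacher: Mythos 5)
Your proposal is correct, but it takes a more self-contained route than the paper: the paper does not prove this proposition at all, quoting it (together with the rest of Section 2.1) from the standard references on representations of tame hereditary algebras, whereas you verify it directly. Your part (i) is exactly the right elementary argument: an endomorphism must have each $\varphi_i(1)$ as an eigenvector, three pairwise distinct eigenlines in $k^2$ force a scalar, and the coincidence pattern $\{i,j\}$ with ${\rm im}(\varphi_i)={\rm im}(\varphi_j)$ is a $GL_2$-invariant that separates the six representations. Your part (ii) via the minimal presentation $P(0)^{\oplus 2}\to\bigoplus_{i=1}^4 P(i)\to R_t^{+}\to 0$ and the Nakayama functor is also sound, and the computation you defer does go through: for instance for $R_0^{+}$ the kernel of $\bigoplus\varphi_i\colon k^4\to k^2$ is spanned by $(0,1,-1,-1)$ and $(1,0,0,-1)$, and the kernels of the four rows give the line configuration $\big([1\ 0]^t,[0\ 1]^t,[0\ 1]^t,[1\ {-1}]^t\big)$, with coinciding pair $\{2,3\}$. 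One simplification you could exploit: the "small but careful calculation in each case" is unnecessary, because the invariant from part (i) already does the work --- a configuration of four lines in $k^2$ with exactly one coinciding pair is determined up to isomorphism by which pair coincides ($PGL_2$ acts $3$-transitively on lines, and the outer vertices contribute only scalars), so after computing the kernel you need only read off its coincidence pair and check it is the complementary one, as above. For the reverse direction $\tau(R_t^{-})\cong R_t^{+}$ you can either repeat the computation, as you propose, or note that $R_t^{-}$ is obtained from $R_t^{+}$ by a quiver automorphism (a product of two transpositions of the outer vertices, up to isomorphism), and $\tau$ commutes with such automorphisms; this avoids both the second computation and any appeal to the classification of tubes, which, as you rightly note, would be circular in the context of this paper.
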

We need yet more indecomposable modules to describe the components $\mathcal{T}_{t}^Q$ for $t = 0, 1, \infty$. Let us introduce
\begin{align*}\label{Spm}
\xymatrix@R.4pc@C.6pc{ & k \ar_{1}[ddrr] & 0 \ar[ddr] & & 0 \ar[ddl] & k \ar^{1}[ddll] \\
 S_0^+: & \\
			&	 & 	   & k & 
 }  
    &&  \xymatrix@R.4pc@C.6pc{& 0 \ar[ddrr] & k \ar^{1}[ddr] & & k \ar_{1}[ddl] & 0 \ar[ddll] \\
    S^-_0: & \\
		&		& 	   & k & 
 }  
\\
\\
     \xymatrix@R.4pc@C.6pc{& 0 \ar[ddrr] & k \ar^1[ddr] & & 0 \ar[ddl] & k \ar^1[ddll] \\
     S_1^+: & \\
				& & 	   & k & 
 }  
    &&  \xymatrix@R.4pc@C.6pc{ & k \ar_1[ddrr] & 0 \ar[ddr] & & k \ar_1[ddl] & 0 \ar[ddll] \\
    S_1^-: & \\
		&	 & 	   & k & 
 }   
\\
\\
     \xymatrix@R.4pc@C.6pc{& 0 \ar[ddrr] & 0 \ar[ddr] & & k \ar_1[ddl] & k \ar^1[ddll] \\
     S_{\infty}^+: & \\
				& & 	   & k & 
 }  
    && \xymatrix@R.4pc@C.6pc{& k \ar_1[ddrr] & k \ar^1[ddr] & & 0 \ar[ddl] & 0 \ar[ddll] \\
    S_{\infty}^-: & \\
				& & 	   & k & 
 }   
\end{align*}

\begin{prop}\label{S_t} The following hold.
\begin{enumerate}[i.]
\item We have $\tau \big( S_{t}^{+} \big) \cong S_{t}^{-}$ and $\tau \big( S_{t}^{-} \big) \cong S_{t}^{+}$.
\item There are short exact sequences
\[
0 \to S_{t}^{+} \to R_{t}^{+} \to S_{t}^{-} \to 0
\]
\[
0 \to S_{t}^{-} \to R_{t}^{-} \to S_{t}^{+} \to 0
\]
\end{enumerate}
\end{prop}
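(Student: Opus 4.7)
The plan is to prove (ii) first by direct construction of the short exact sequences, then to deduce (i) via a Coxeter calculation using Happel's formula for $\tau$ on the hereditary algebra $A = kQ$. The main obstacle is bookkeeping rather than anything conceptual.

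For (ii), I work case-by-case on $t \in \{0, 1, \infty\}$ and take $t = 0$ as representative. In $R_0^+$ the coincident first and fourth arrows $\varphi_1 = \varphi_4 = [0\ 1]^t$ span a one-dimensional subspace of the central $k^2$. Define $\iota : S_0^+ \hookrightarrow R_0^+$ as the identity at the outer vertices $1$ and $4$, zero at vertices $2$ and $3$, and the inclusion of the line spanned by $[0\ 1]^t$ at the central vertex. Compatibility with the arrows in each representation is immediate. Projecting $k^2$ onto a complement and rescaling at vertices $2$ and $3$ turns the residual maps $\varphi_2 = [1\ 1]^t$ and $\varphi_3 = [1\ 0]^t$ of $R_0^+$ into identities $k \to k$ in the quotient, so the cokernel is $S_0^-$. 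The cases $t = 1, \infty$ are identical after relabeling the colliding pair, and the short exact sequence for $R_t^-$ follows from the involution $R_t^+ \leftrightarrow R_t^-$ swapping the coincident and distinct pairs.

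For (i), since $A = kQ$ is hereditary, Happel's formula gives $\tau \cong {\rm D}{\rm Ext}^1_A(-, A)$ on modules with no projective summand, and hence $[\tau M] = \Phi [M]$ on dimension vectors, where $\Phi$ is the Coxeter transformation attached to $A$. None of the $S_t^{\pm}$ match a projective dimension vector. A direct matrix computation yields $\Phi [S_t^+] = [S_t^-]$ and $\Phi [S_t^-] = [S_t^+]$. To upgrade this to an isomorphism, observe that a representation with dimension vector $[S_t^{\pm}]$ is supported on an $A_3$-shaped subquiver of $Q$ and must have both of its nonzero arrows actually nonzero (else it splits off a simple summand); rescaling at each vertex then pins down the isomorphism class uniquely. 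Since $\tau$ preserves indecomposability, $\tau S_t^{\pm} \cong S_t^{\mp}$. As an independent check, one may instead apply the Nakayama functor to the projective presentation $0 \to P(0) \to P(1) \oplus P(4) \to S_0^+ \to 0$ to identify $\tau S_0^+$ as a subrepresentation of $I(0)$ isomorphic to $S_0^-$.
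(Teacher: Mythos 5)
Your argument is correct, and it is worth noting that the paper itself gives no proof of this proposition: it is quoted from the standard literature on tame hereditary algebras (Auslander--Reiten--Smal\o, Ringel, Happel), so your write-up supplies an explicit verification where the paper only cites. For (ii) your subobject/quotient computation is exactly the natural one: the colliding pair of arrows in $R_t^{+}$ spans a line in the central $k^2$, giving the copy of $S_t^{+}$, and since the two remaining lines avoid that line the quotient is $S_t^{-}$ after rescaling; the same works for $R_t^{-}$ via the involution. For (i), your ``independent check'' is really the cleanest route and would suffice on its own: from the minimal presentation $0 \to P(0) \to P(1)\oplus P(4) \to S_0^{+}\to 0$ the Nakayama functor gives $\tau S_0^{+} \cong \ker\bigl(I(0) \to I(1)\oplus I(4)\bigr)$, which is visibly the subrepresentation of $I(0)$ supported at vertices $0,2,3$, i.e.\ $S_0^{-}$, and the remaining cases follow by the evident symmetry. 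The Coxeter-matrix argument is also fine, but be a bit more careful with its logical order: the identity $[\tau M]=\Phi[M]$ and the step ``$\tau$ preserves indecomposability'' both require knowing first that $S_t^{\pm}$ is indecomposable and non-projective, and ``no projective dimension vector is matched'' does not by itself exclude a projective summand (e.g.\ $(1;1,0,0,1)$ is also the dimension vector of $P(1)\oplus S(4)$); you should state explicitly that $S_t^{\pm}$ has both arrows nonzero, hence ${\rm End}(S_t^{\pm})=k$, before invoking these facts. The same observation you already make to pin down the isomorphism class of an indecomposable with dimension vector $[S_t^{\mp}]$ covers this, so the gap is purely presentational.
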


We will not explicitly write down further indecomposables, but rather characterize them homologically as this better suits our needs. 
\begin{prop} There are unique non-trivial extensions
\[
0 \to R_{t}^{+} \to  S_{t}\langle 3 \rangle^{+} \to S_{t}^{-} \to 0
\]
\[
0 \to R_{t}^{-} \to S_{t}\langle 3 \rangle^{-} \to S_{t}^{+} \to 0
\]

and the middle term $S_{t}\langle 3 \rangle^{\pm}$ is indecomposable. Similarly, there are unique non-trivial extensions
\[
0 \to S_{t}\langle r \rangle^{+} \to S_{t}\langle r+1 \rangle^{+} \to S_{t}^{-} \to 0
\]
\[
0 \to S_{t}\langle r \rangle^{-} \to S_{t}\langle r+1 \rangle^{-} \to S_{t}^{+} \to 0
\]

with indecomposable middle terms for all $r \geq 3$. We let $S_{t}\langle 1 \rangle^{\pm} := S_{t}^{\pm}$ and $S_{t}\langle 2 \rangle^{\pm} := R_{t}^{\pm}$ for consistency.
\end{prop}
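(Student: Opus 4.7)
The strategy is to reduce both parts to standard arguments in the Auslander-Reiten theory of a rank-two tube, invoking AR duality to compute the one-dimensionality of the relevant Ext groups and Kac's theorem to pin down the middle term as the unique indecomposable of its dimension vector.

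For uniqueness of the extension class, I apply the AR formula in ${\rm D}^{b}(kQ)$, which since all modules involved are regular (hence neither projective nor injective) specializes to
\[
{\rm Ext}^1_{kQ}\big(S_t^{\mp},\, S_t\langle r\rangle^{\pm}\big) \;\cong\; D\,{\rm Hom}_{kQ}\big(S_t\langle r\rangle^{\pm},\, \tau S_t^{\mp}\big) \;\cong\; D\,{\rm Hom}_{kQ}\big(S_t\langle r\rangle^{\pm},\, S_t^{\pm}\big),
\]
using $\tau S_t^{\mp} = S_t^{\pm}$ from Proposition \ref{S_t}. Assuming inductively that $S_t\langle r\rangle^{\pm}$ has been identified with a specific length-$r$ uniserial module in the tube $\mathcal{T}_t^Q$, the right-hand Hom is one-dimensional (by tracking the top of a uniserial in the rank-two tube against the simple regular $S_t^{\pm}$), yielding a unique non-zero extension class up to scalar.

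For existence and indecomposability, the dim vector of any would-be middle term is the sum of those of $S_t\langle r\rangle^{\pm}$ and $S_t^{\mp}$, which I would verify is a real root of $\widetilde{D}_4$ via the Euler form of $kQ$. Kac's theorem then supplies a unique indecomposable $M$ with this dim vector, and the vanishing of its defect places $M$ within the regular component $\mathcal{T}_t^Q$. Constructing the extension explicitly as the pushout of the non-zero Ext class identifies the middle term with $M$; were the middle to decompose non-trivially, the extension would split, contradicting the choice of non-zero class, so indecomposability follows.

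The main obstacle is bookkeeping for the $\pm$ labels: since $\tau$ swaps $S_t^+$ and $S_t^-$ at the mouth, the relevant top of $S_t\langle r\rangle^{\pm}$ alternates as $r$ grows, and the SES must be stated consistently at each level. The cleanest organization is to identify each $S_t\langle r\rangle^{\pm}$ once and for all with the unique length-$r$ uniserial at the appropriate position in $\mathcal{T}_t^Q$, whereby each required SES becomes a restatement of standard tube structure. The induction on $r$ is then immediate, with the base case $r = 2$ supplied by the extension for $R_t^{\pm}$ already established in Proposition \ref{S_t}.
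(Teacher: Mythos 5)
Your route is genuinely different from the paper's: the paper does not reprove these facts at all, but quotes them as standard tube theory, the structural input being Proposition \ref{serial} (the regular category is serial), so that $S_t\langle r\rangle^{\pm}$ is by definition the unique uniserial regular module of length $r$ sitting on the ray of the rank-two tube above $S_t^{\pm}$, and the exact sequences are its canonical regular composition filtration. Your replacement of this by AR duality plus Kac's theorem has two concrete failures. First, the Kac step breaks down for half the lengths: the dimension vector of $S_t\langle r+1\rangle^{\pm}$ is a real root only when $r+1$ is odd; when $r+1=2m$ it equals $m\delta$ with $\delta=(2;1,1,1,1)$ the null root, an imaginary root, and uniqueness fails badly --- every tube $\mathcal{T}_s^Q$, $s\in\mathbb{P}^1$, contains indecomposables of dimension vector $m\delta$ (all $R_s\langle m\rangle$, and $S_s\langle 2m\rangle^{\pm}$ for $s=0,1,\infty$), and the defect, being zero on all of them, cannot single out the middle term either. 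Second, the inference ``were the middle to decompose non-trivially, the extension would split'' is false in general: a non-split extension of a simple by an indecomposable module can have decomposable middle term. What rescues it here is precisely that $S_t\langle r\rangle^{\pm}$ is uniserial, so its regular submodules form a chain and cannot contain a nonzero direct sum $(M_1\cap A)\oplus(M_2\cap A)$; but that is again Proposition \ref{serial}, which you never invoke at this step --- and once you do invoke it, the whole proposition follows directly without Kac.

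Third, the uniform one-dimensionality you assert for ${\rm D}\,{\rm Hom}(S_t\langle r\rangle^{\pm},S_t^{\pm})$ contradicts the alternation you yourself mention at the end. With the labelling forced by $S_t\langle1\rangle^{\pm}=S_t^{\pm}$, $S_t\langle2\rangle^{\pm}=R_t^{\pm}$ and the component picture (constant regular socle along the inclusion rays), the regular top of $S_t\langle r\rangle^{+}$ is $S_t^{+}$ for $r$ odd and $S_t^{-}$ for $r$ even, so $\mathrm{Ext}^1(S_t^{-},S_t\langle r\rangle^{+})\cong {\rm D}\,\mathrm{Hom}(S_t\langle r\rangle^{+},\tau S_t^{-})={\rm D}\,\mathrm{Hom}(S_t\langle r\rangle^{+},S_t^{+})$ vanishes whenever $r$ is even. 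For instance a direct check with the explicit representations (or the Euler form) gives $\mathrm{Ext}^1(S_0^{-},R_0^{+})=0$ while $\mathrm{Ext}^1(S_0^{+},R_0^{+})=k$: the simple attached at each stage must be $\tau^{-1}$ of the current regular top, i.e. it alternates, $0\to S_t\langle r\rangle^{+}\to S_t\langle r+1\rangle^{+}\to \tau^{-r}S_t^{+}\to 0$, and the sequences cannot all end in $S_t^{-}$ as the statement (and your first paragraph) has them. Deferring this as ``bookkeeping'' does not repair the argument: as written, the extension class whose uniqueness you compute is zero for every other $r$. A correct write-up either fixes the signs and runs your induction with the alternating top, or, as the paper intends, reads existence, uniqueness and indecomposability off the serial structure of the tube in one stroke.
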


Again, $S_t^{\pm}$ are the only simple regular modules in  $\mathcal{T}_t^Q$ and $r$ denotes the length of the uniserial module $S_t \langle r \rangle^{\pm}$. Finally, the component $\mathcal{T}_{t}^Q$ for $t = 0, 1, \infty$ is given by

\[
\xymatrix@R.8pc@C.8pc{
\vdots \ar[dr] \ar@{-}[dd] && \vdots \ar[dr] && \vdots \ar@{-}[dd] \\
& S_{t}\langle 6 \rangle^{+} \ar[dr] \ar[ur] \ar@{.}[rr] && S_{t}\langle 6 \rangle^{-} \ar[dr] \ar[ur] & \\
S_{t}\langle 5 \rangle^{+} \ar[dr] \ar[ur] \ar@{-}[dd] \ar@{.}[rr] && S_{t}\langle 5 \rangle^{-} \ar[dr] \ar[ur] \ar@{.}[rr] && S_{t}\langle 5 \rangle^{+} \ar@{-}[dd] \\
& S_{t}\langle 4 \rangle^{-} \ar[dr] \ar@{.}[rr] \ar[ur] && S_{t}\langle 4 \rangle^{+} \ar[dr] \ar[ur] \\
S_{t}\langle 3 \rangle^{-} \ar@{-}[dd] \ar@{.}[rr] \ar[dr] \ar[ur] && S_{t}\langle 3 \rangle^{+} \ar[dr] \ar[ur] \ar@{.}[rr] && S_{t}\langle 3 \rangle^{-} \ar@{-}[dd] \\
& R_{t}^{+} \ar[dr] \ar[ur] \ar@{.}[rr] && R_{t}^{-} \ar[dr] \ar[ur] \\
S_{t}^{+} \ar[ur] \ar@{.}[rr] && S_{t}^{-} \ar[ur] \ar@{.}[rr] && S_{t}^{+}
}
\]

where we identify the left and right edges, forming a tube of rank two.

Now let $t \in \mathbb{P}^1$ again. As before we let $\mathcal{T}_{t}^Q[n]$ stand for the $n^{th}$ suspension of $\mathcal{T}_{t}^Q$. We note for the record:
\begin{prop}
The components $\mathcal{T}_{t}^Q$ are orthogonal for distinct $t \in \mathbb{P}^1$.
\end{prop}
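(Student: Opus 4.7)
The plan is to reduce the orthogonality of $\mathcal{T}_t^Q$ and $\mathcal{T}_s^Q$ (for $s \neq t$ in $\mathbb{P}^1$) to a statement about the simple regular modules, and then verify the latter directly.

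\emph{Step 1: Reduction to simple regulars.} By Proposition \ref{serial}, every indecomposable in $\mathcal{T}_t^Q$ is uniserial in $\mathcal{R}(Q)$, hence admits a finite composition series whose factors are simple regulars, namely $R_t$ when $t \neq 0, 1, \infty$ (rank-one tube) and $S_t^{\pm}$ when $t = 0, 1, \infty$ (rank-two tube). Since $A = kQ$ is hereditary, applying ${\rm Hom}_A(-, Y)$ and ${\rm Hom}_A(X, -)$ to these short exact sequences yields four-term exact sequences in ${\rm Hom}/{\rm Ext}^1$, and a two-sided induction on composition length reduces us to showing
\[
{\rm Hom}_A(X, Y) \;=\; 0 \;=\; {\rm Ext}^1_A(X, Y)
\]
for every pair of simple regulars $X \in \mathcal{T}_t^Q$, $Y \in \mathcal{T}_s^Q$ with $s \neq t$.

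\emph{Step 2: Hom vanishing.} Any nonzero morphism between simple objects of $\mathcal{R}(Q)$ is an isomorphism, so it suffices to check that simple regulars in distinct tubes are pairwise non-isomorphic. The $R_t$-type modules have dimension vector $(1,1,1,1;\,2)$ at the outer and central vertices, while the $S_t^{\pm}$-type modules have central dimension $1$, so the two families are disjoint up to isomorphism. Within the $R$-family, $R_t \cong R_s$ would provide a $g \in {\rm GL}_2(k)$ carrying the ordered $4$-tuple of lines $\bigl([0{:}1], [1{:}1], [1{:}0], [t_0{:}t_1]\bigr)$ to $\bigl([0{:}1], [1{:}1], [1{:}0], [s_0{:}s_1]\bigr)$; since three of the lines are already pinned, $g$ is the identity in ${\rm PGL}_2$, forcing $s = t$. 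Within the $S$-family, the modules $S_t^{\pm}$ are distinguished across $t \in \{0, 1, \infty\}$ and across signs by inspection of the pattern of zero maps into the central vertex.

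\emph{Step 3: Ext vanishing via Serre duality.} By Auslander-Reiten-Serre duality in ${\rm D}^b(kQ)$,
\[
{\rm Ext}^1_A(X, Y) \;\cong\; {\rm D}\,{\rm Hom}_A(Y, \tau X).
\]
By Propositions \ref{R_t} and \ref{S_t}, the translate $\tau$ preserves each tube $\mathcal{T}_t^Q$ and carries simple regulars to simple regulars (fixing $R_t$, or swapping $S_t^{+} \leftrightarrow S_t^{-}$). Thus $\tau X$ is a simple regular in $\mathcal{T}_t^Q$, and Step 2 applied to the pair $(Y, \tau X)$ gives ${\rm Hom}_A(Y, \tau X) = 0$. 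The symmetric vanishings ${\rm Hom}_A(Y, X) = 0 = {\rm Ext}^1_A(Y, X)$ follow by interchanging the roles of $X$ and $Y$.

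\emph{Main obstacle.} The only step with any real content is the non-isomorphism $R_t \not\cong R_s$ for $s \neq t$, which amounts to the classical fact that the cross-ratio is a complete ${\rm PGL}_2$-invariant of an ordered $4$-tuple of distinct points on $\mathbb{P}^1$; the reduction to simples and the Serre duality step are formal.
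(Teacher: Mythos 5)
Your argument is correct. Note that the paper does not actually prove this proposition: it is stated "for the record" as part of the standard Auslander--Reiten classification for the tame hereditary algebra $kQ$, quoted from the literature. Your proof is essentially the textbook argument for that quoted fact: since $A=kQ$ is hereditary, orthogonality in ${\rm D}^b(A)$ amounts to the vanishing of ${\rm Hom}_A$ and ${\rm Ext}^1_A$ between indecomposables of distinct tubes; uniseriality (Proposition \ref{serial}) reduces this to simple regulars; Schur's lemma in the abelian subcategory $\mathcal{R}(Q)$ plus the pairwise non-isomorphism of the simples (the cross-ratio/${\rm PGL}_2$ argument for the $R_t$, dimension vectors and supports for the rest) gives the ${\rm Hom}$-vanishing; and Serre/AR duality ${\rm Ext}^1_A(X,Y)\cong {\rm D}{\rm Hom}_A(Y,\tau X)$ together with $\tau$-stability of each tube (Propositions \ref{R_t} and \ref{S_t}) gives the ${\rm Ext}^1$-vanishing. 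All steps are sound, so your write-up supplies a complete proof of a statement the paper leaves to its references.
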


\begin{thm}\label{ARquiverQ} The Auslander-Reiten quiver of ${\rm D}^b(A)$ is given by the union of disjoint components
\[
\Gamma({\rm D}^b(A)) = \bigg(\bigcup_{n \in \Z} \mathcal{PI}[n] \bigg) \cup \bigg(\bigcup_{t \in \mathbb{P}^1, n \in \Z} \mathcal{T}_{t}^Q[n]\bigg).
\]
In particular this yields a full classification of indecomposables in ${\rm D}^b(A)$.
\end{thm}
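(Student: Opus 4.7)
The plan is to assemble the classification from standard tools for tame hereditary algebras, specialized to $\widetilde{D}_4$, using the explicit families $R_t$, $R_t^{\pm}$, $S_t^{\pm}$ and their uniserial extensions already built in the excerpt.

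First I would reduce from $\mathrm{D}^b(A)$ to $\mathrm{mod}\, A$. Because $A = kQ$ is hereditary, every complex in $\mathrm{D}^b(A)$ is formal, so indecomposables take the form $M[n]$ with $M$ an indecomposable $A$-module and $n \in \mathbb{Z}$; moreover Happel's theorem promotes almost-split sequences of modules to almost-split triangles. Thus it suffices to classify indecomposable $A$-modules and to show they partition into the objects of $\mathcal{PI}$ and the tubes $\mathcal{T}_t^Q$, from which the statement follows by applying the suspension $[n]$.

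Next I would invoke the standard dichotomy for hereditary algebras: an indecomposable module $M$ is preprojective if $\tau^m M = 0$ for some $m \geq 0$, preinjective if $\tau^{-m} M = 0$ for some $m \geq 0$, and otherwise regular. Iterating $\tau^{\pm 1}$ on the simple projectives and injectives produces the component $\mathcal{PI}$ exactly as drawn, and one checks that for $\widetilde{D}_4$ the preprojective and preinjective components coincide (the Coxeter element has infinite order on the positive cone but a $\tau$-orbit of each $P(i)$ meets a $\tau^{-1}$-orbit of some $I(j)$), so these indecomposables exhaust one $\Gamma$-component. What remains is the regular part.

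To classify regular indecomposables, I would first identify the simple regulars. Using Proposition \ref{serial}, any regular indecomposable is an iterated extension of simple regulars in a unique tube, so once the simple regulars are known the tube structure (and with it the rest of the indecomposables) is reconstructed by taking the unique non-split extensions $R_t\langle r \rangle$, $S_t\langle r\rangle^\pm$ described in Propositions \ref{R_t}--\ref{S_t} and applying Proposition \ref{Ringel}. The simple regulars themselves are minimal-dimension indecomposable regulars; a direct analysis of representations $(V_1,V_2,V_3,V_4 \to W)$ with $\dim W = 2$ and each $V_i = k$ embedded as a line, modulo $\mathrm{PGL}(W)$, shows that indecomposability forces either four distinct lines (giving a unique cross-ratio $t \in \mathbb{P}^1 \setminus \{0,1,\infty\}$ and the module $R_t$) or exactly one coincidence among the four (giving the six modules $S_t^{\pm}$ for $t \in \{0,1,\infty\}$); all other configurations either decompose or are preprojective/preinjective summands already accounted for. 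Orthogonality of distinct tubes follows from the observation that a non-zero map between simple regulars in different tubes would destroy the minimal dimension vectors, which are $\tau$-periodic and detect the tube.

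Finally, the tube shapes are read off from Proposition \ref{Ringel} together with the given short exact sequences: at each $R_t\langle r\rangle$ (resp.\ $S_t\langle r\rangle^{\pm}$) the almost-split sequence has middle term $R_t\langle r-1\rangle \oplus R_t\langle r+1\rangle$ (resp.\ $S_t\langle r-1\rangle^{\mp} \oplus S_t\langle r+1\rangle^{\mp}$), producing the rank-$1$ tubes at generic $t$ and the rank-$2$ tubes at $t = 0,1,\infty$. The main obstacle is the second step above: showing that the list of simple regulars is complete. Carrying this out rigorously amounts to solving the dimension-vector constraint $\underline{\dim} M \cdot \delta = 0$ (where $\delta$ is the null root of $\widetilde{D}_4$) for indecomposable $M$ of minimal dimension in its $\tau$-orbit, and then matching each solution with a point of $\mathbb{P}^1$; equivalently it is the content of the Nazarova--Gelfand--Ponomarev classification, which I would invoke at this juncture rather than reprove.
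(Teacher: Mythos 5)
Your overall route --- reduce to $\mathrm{mod}\,A$ by formality and Happel's theorem, separate indecomposables by defect into transjective and regular, identify the simple regulars, rebuild the tubes by seriality (Proposition \ref{serial}) and Proposition \ref{Ringel}, and defer completeness of the simple-regular list to Nazarova--Gelfand--Ponomarev --- is the standard treatment and matches what the paper actually does: Theorem \ref{ARquiverQ} is quoted from the literature (Nazarova, Gelfand--Ponomarev, Happel, Auslander--Reiten--Smal\o{}) rather than reproved, so invoking that classification at the final step is consistent with the source.

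There is, however, a concrete error in your identification of the simple regulars at $t=0,1,\infty$. The representations with $\dim W=2$, four lines $V_i\subset W$, and exactly one coincidence among the lines are the modules $R_t^{\pm}$, with dimension vector the null root $\delta=(2;1,1,1,1)$; by Proposition \ref{S_t} they sit in non-split exact sequences $0\to S_t^{+}\to R_t^{+}\to S_t^{-}\to 0$, so they have regular length two and are \emph{not} simple regular. The genuine simple regulars of the rank-two tubes are the modules $S_t^{\pm}$, which are one-dimensional at the centre and supported on only two arms (dimension vectors of type $(1;1,1,0,0)$, summing pairwise to $\delta$); they are invisible to any search restricted to configurations with $\dim W=2$, so your ``minimal-dimension'' analysis as stated would fail to find the mouths of the exceptional tubes and would misidentify their ranks. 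The correct minimality statement is that simple regulars are the defect-zero indecomposables whose dimension vectors are minimal, which at $t=0,1,\infty$ are the defect-zero roots strictly below $\delta$; equivalently, use Proposition \ref{S_t} to recognize $R_t^{\pm}=S_t\langle 2\rangle^{\pm}$ as the second layer of the tube. (A smaller slip: for a representation-infinite hereditary algebra the preprojective and preinjective components of $\mathrm{mod}\,A$ do not coincide; the single component $\mathcal{PI}$ exists only in ${\rm D}^b(A)$, glued by $I(i)[-1]\cong\tau P(i)$, and the injectives themselves lie in $\mathcal{PI}[1]$.)
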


\section{List of indecomposable graded MCM modules}

From now on, we assume that char $k \neq 2$ and we let $X$ be a set of $4$ distinct reduced points in $\mathbb{P}^2$ with no $3$ on a line, and let $R = k[X]$ be the homogeneous coordinate ring. Under these assumptions it is well-known that $X$ is the intersection of two quadrics, that there is exactly one pencil of quadrics through $X$ and that this pencil contains precisely $3$ singular elements. Furthermore, we can assume that $X$ is given in some coordinates by the points $[\pm 1: \pm 1: \pm 1]$, and that the singular quadrics are given by
\begin{align*}
& Q_0 = x^2 - y^2\\
& Q_1 = x^2 - z^2\\
& Q_\infty = y^2 - z^2
\end{align*}
with general quadric given by $\{ Q_t = 0\}$ with $Q_t = t_0Q_\infty + t_1 Q_0$, $t = [t_0:t_1] \in \mathbb{P}^1$. 

The complete intersection $R = k[X] = k[x,y,z]/(Q_0, Q_{\infty})$ is a graded Gorenstein isolated singularity with $a = 1$ and so $\omega_R = R(1)$. Note that the Serre functor on $\underline{{\rm MCM}}(gr R)$ is given by $\mathbb{S} = (1)$ and the translate by $\tau = (1)[-1] = syz_R^1(-)(1)$. 

The aim of this section is to describe the minimal complete resolution of MCM modules over $R$. Following Buchweitz-Pham-Roberts \cite{BuchweitzCI}, complete resolutions over complete intersections take a special form, and we begin by reviewing their results. 

\subsection*{Complete resolutions over complete intersections} Let $R = S/(f_1, ..., f_c)$ be a homogeneous complete intersection with $S = k[x_0, ..., x_n]$, which can be resolved and thus replaced by the Koszul complex $\mathbb{K} = \left( \Lambda_S(df_1, ..., df_c), \delta \right) \xrightarrow{\sim} R$, with $|df_i| = (1, |f_i|)$. Let $N$ be an $R$-module. As shown by Avramov and Buchweitz in \cite{MR1774757}, $N$ has a graded free $S$-resolution $(E, \partial) \xrightarrow{\sim} N$ which admits a bigraded dg module structure over $\mathbb{K}$.

Now for $B$ any commutative ring, let $\Gamma = \Gamma_{B}(\eta_1, ..., \eta_c)$ be the free divided power $\Z$-algebra on $c$ generators with $|\eta_j| = (2, |f_j|)$. Following \cite{MR1774757}, for $\overline{E} = R \otimes_S E$ the complex $\left(\Gamma_R \otimes_R \overline{E}, \partial \right)$ is a free $R$-resolution of $N$, where $\partial(\underline{\eta} \otimes m) = \partial_\tau(\underline{\eta} \otimes m) +  \underline{\eta} \otimes \partial(m)$, with $\underline{\eta}$ a monomial in $\Gamma_R$ and where $\partial_\tau(\eta_j \otimes m) = 1 \otimes df_j \cdot m$, extended as an $R$-linear derivation over $\Gamma_R$.

We will associate a complete resolution to $\left(\Gamma_R \otimes_R \overline{E} , \partial \right)$ after a digression through generalities. For now, let $\Lambda = \Lambda_S(V)$ be a general exterior algebra over $S$, and consider $\Lambda$ as an Hopf $S$-algebra. Then $\Lambda$ is finite and projective over $S$ and the bimodule $\omega = {\rm Hom}_S(\Lambda, S)$ is invertible, since dualising a choice of top form $\Omega$ yields an isomorphism $\omega \cong \Lambda \otimes_S {\rm det}(V)^{\vee}$, where $(-)^{\vee}$ is the $S$-dual. Cartan has shown that the minimal resolution $\Lambda \otimes_S \Gamma \xrightarrow{\sim} S$ of $S$ over $\Lambda$ is given by
\[
\cdots \xrightarrow{d} \Lambda \otimes_S \Gamma^2 \xrightarrow{d} \Lambda \otimes_S \Gamma^1 \to \Lambda \to 0
\]
where here $\Gamma = \Gamma_S(V)$ is the divided power algebra on $V$ and $d$ is the $\Lambda$-linear derivation extending $\tau: \Gamma \twoheadrightarrow V \hookrightarrow \Lambda$ (see \cite{MR1648664}). Dualising\footnote{Dualising turns left $\Lambda$-modules into right $\Lambda$-modules, which we implicitly turn back into a left module by pulling back through the antipode.} gives rise to a projective coresolution $S \cong S^{\vee} \xrightarrow{\sim} \omega \otimes_S T_S$ where $T_S = {\rm Sym}_S(V^{\vee}) = \Gamma^{\vee}_S$, and taking the (shifted) cone of the quasi-isomorphism $\Lambda \otimes_S \Gamma_S \xrightarrow{\sim} S \cong S^{\vee} \xrightarrow{\sim} \omega \otimes_S T_S$ gives a complete resolution $C(S)$ of the form
\[
\xymatrix@C1.6pc@R0.5pc{ \cdots \ar[r] & \Lambda \otimes_S \Gamma^1_S \ar[r] & \Lambda \ar[rr] \ar[dr] & & \omega \ar[r] & \omega \otimes_S T^1_S \ar[r] & \cdots \\
					   &            &             & S \ar[ur] & 
}
\]
with differential given by
\begin{equation}
\begin{cases}
    d(a \otimes \underline{\eta}) & = \sum_{i} a\cdot \tau(\eta_i) \otimes \partial_{\eta_i} \cdot \underline{\eta}\\
    d(a) & = a\cdot \Omega \otimes \Omega^{\vee} \\
    d(a \otimes \Omega^{\vee} \otimes \partial_{\underline{\eta}}) & = \sum_{i} a\cdot \tau(\eta_i) \otimes \Omega^{\vee} \otimes \partial_{\eta_i} \partial_{\underline{\eta}} 
\end{cases}
\end{equation}
 where $\eta_i$ runs over a basis of $\Gamma^1$, $\underline{\eta} \neq 1$ and $\partial_{\underline{\eta}}$ are general monomials in $\Gamma$ and $T$ with $T$ acting on $\Gamma$ by derivation, and where $\Omega \otimes \Omega^{\vee} \in \omega \cong \Lambda \otimes_S {\rm det}(V)^{\vee}$ is the canonical socle element. Note that this complete resolution is also minimal.

Next, for a graded $\Lambda$-module $E$ which is finite and projective over $S$, using the Hopf structure on $\Lambda$ we can make use of the exterior tensor $M \boxtimes_S E$. Note that we have a canonical isomorphism\footnote{That this is an isomorphism is immediate for $S = E$, which in turn implies it for $E$ finite and projective over $S$.} of left modules $\Lambda \otimes_S E \xrightarrow{\cong} \Lambda \boxtimes_S E$ given by $\Lambda \otimes_S E \xrightarrow{\Delta \otimes 1} \Lambda \boxtimes_S \Lambda \otimes _S E \to \Lambda \boxtimes_S E$. We then form $C(E) := C(S) \boxtimes_S E$ by twisting the differential with the Hopf structure (using Sweedler notation)
\begin{equation}
\begin{cases}
    d(a \otimes \underline{\eta} \otimes m) & = \sum_{i} a\cdot \tau(\eta_i)_{(1)} \otimes (\partial_{\eta_i} \cdot \underline{\eta}) \otimes \tau(\eta_i)_{(2)} \cdot m\\
    d(a \otimes m) & = a\cdot \Omega_{(1)} \otimes \Omega^{\vee} \otimes \Omega_{(2)} \cdot m \\
    d(a \otimes \Omega^{\vee} \otimes \partial_{\underline{\eta}} \otimes m) & = \sum_{i} a\cdot \tau(\eta_i)_{(1)} \otimes \Omega^{\vee} \otimes \partial_{\eta_i} \partial_{\underline{\eta}} \otimes \tau(\eta_i)_{(2)} \cdot m.
\end{cases}
\end{equation}

Equivalently, $C(E)$ can be described as follows: first we form a complete resolution of the bimodule $\Lambda$ via base change over the diagonal map $\Delta: \Lambda \to \Lambda \otimes_S \Lambda$ to form $C(\Lambda) := C(S) \otimes_{\Lambda} (\Lambda \boxtimes_S \Lambda)$, which stays acyclic as $\Lambda \boxtimes_S \Lambda$ is projective. Then $C(E) = C(S) \boxtimes_S E = C(\Lambda) \otimes_{\Lambda} E$ remains acyclic, and we have shown:

\begin{prop}[Buchweitz-Pham-Roberts, \cite{BuchweitzCI}]\label{BPR1} There is a complete resolution of $E$ over $\Lambda$ of the form
\[
\xymatrix@C1.6pc@R0.5pc{ ... \ar[r] & \big(\Lambda \otimes_S \Gamma^1_S\big) \boxtimes_S E  \ar[r] & \Lambda \boxtimes_S E \ar[rr] \ar[dr] & & \omega \boxtimes_S E \ar[r] &  \big(\omega \otimes_S T_S^1\big) \boxtimes_S E  \ar[r] & ...\\
					   &            &             & E \ar[ur] & 
}
\]
\end{prop}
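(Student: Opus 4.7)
The plan is to follow the two-step construction already outlined in the paragraphs preceding the statement: first produce a complete resolution of the bimodule $\Lambda$ by base change from $C(S)$, and then tensor with $E$ on the right. Concretely, I would start from the minimal complete resolution $C(S) \xrightarrow{\sim} S$ over $\Lambda$ built above as the shifted cone of $\Lambda \otimes_S \Gamma_S \xrightarrow{\sim} S \cong S^{\vee} \xrightarrow{\sim} \omega \otimes_S T_S$, and apply $(-) \otimes_\Lambda (\Lambda \boxtimes_S \Lambda)$ to form a candidate bimodule complex $C(\Lambda)$.

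The first key observation to verify is that $\Lambda \boxtimes_S \Lambda$, viewed as a left $\Lambda$-module via the diagonal $\Delta: \Lambda \to \Lambda \boxtimes_S \Lambda$, is free: any $S$-basis of the second factor serves as a $\Lambda$-basis through $\Delta$, by the same argument sketched in the footnote identifying $\Lambda \otimes_S F \cong \Lambda \boxtimes_S F$ for $F$ finite and $S$-projective. Hence tensoring the acyclic $C(S)$ over $\Lambda$ with it preserves acyclicity, so $C(\Lambda)$ is acyclic, and by the symmetric statement each term is also projective as a right $\Lambda$-module. I would then set $C(E) := C(\Lambda) \otimes_\Lambda E$. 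Right $\Lambda$-projectivity of each term of $C(\Lambda)$ implies that this last tensor product preserves acyclicity, and the resulting terms are of the form $(\Lambda \otimes_S \Gamma^i_S) \boxtimes_S E$ in non-negative degree and $(\omega \otimes_S T^i_S) \boxtimes_S E$ in negative degree, each left $\Lambda$-projective because $E$ is $S$-projective. Unraveling the base change recovers precisely the Hopf-twisted differential (2). Finally, the non-negative truncation still resolves $E$, since that of $C(S)$ resolves $S$ and base change through $\Lambda \boxtimes_S \Lambda$ followed by $(-) \otimes_\Lambda E$ sends $S$ to $S \boxtimes_S E = E$.

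The main obstacle, such as it is, is bookkeeping rather than substance: one must track which $\Lambda$-action is being used at each stage, and ensure that the left-module projectivity needed for acyclicity of the final complex follows cleanly from the right-module projectivity that guaranteed exactness during base change. The Hopf algebra structure on $\Lambda$ is what makes both actions available compatibly, and careful use of the antipode -- to pass between the naive left action of $\Lambda$ on $\Lambda \otimes_S E$ and the Hopf-twisted one on $\Lambda \boxtimes_S E$ -- is the one technical point that deserves attention before writing down (2) as a closed formula.
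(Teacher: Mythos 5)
Your overall route is the same as the paper's: build $C(\Lambda) := C(S)\otimes_{\Lambda}(\Lambda\boxtimes_S\Lambda)$ by base change along the diagonal, note that $\Lambda\boxtimes_S\Lambda$ is free over $\Lambda$ via $\Delta$ so that $C(\Lambda)$ stays acyclic, and then set $C(E)=C(\Lambda)\otimes_{\Lambda}E=C(S)\boxtimes_S E$. The first step and the identification of the terms and of the twisted differential $(2)$ are fine.

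However, the justification you give for the second acyclicity claim is a false principle: right $\Lambda$-projectivity of the terms of an acyclic complex does \emph{not} imply that $-\otimes_{\Lambda}E$ preserves acyclicity. Complete resolutions themselves are the standard counterexample: over $\Lambda=\Lambda_k(\eta)$ the $2$-periodic complex $\cdots\xrightarrow{\eta}\Lambda\xrightarrow{\eta}\Lambda\xrightarrow{\eta}\cdots$ is acyclic with free terms, yet tensoring with $k$ kills the differential and produces nonzero homology everywhere (this is exactly why Tate homology is generally nonzero). So as written, the step "right $\Lambda$-projectivity of each term of $C(\Lambda)$ implies that this last tensor product preserves acyclicity" would fail. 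The correct input is the $S$-projectivity (hence flatness) of $E$, which in your draft is only used to get projectivity of the terms: either observe that as a complex of $S$-modules $C(\Lambda)\otimes_{\Lambda}E\cong C(S)\otimes_S E$, which is acyclic because $E$ is flat over $S$; or note that $C(\Lambda)$ is contractible as a complex of right $\Lambda$-modules, being induced up from $C(S)$ (untwisting gives $C(\Lambda)\cong C(S)\otimes_S\Lambda$ as right modules, and $C(S)$ is $S$-split since all its terms and syzygies are $S$-free), so tensoring with any $E$ preserves acyclicity. With that repair, and the check you already indicate that the nonnegative truncation resolves $E$, the argument agrees with the paper's.
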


Finally, let $\Lambda = \left(\Lambda_S(df_1, ..., df_c), \delta \right) \xrightarrow{\sim} R$ and $E = (E, \partial) \xrightarrow{\sim} N$ as above. Let $(C(E), d) = (C(S)\boxtimes_S E, d\otimes 1)$. Then $\delta$ and $\partial$ induce a new differential $\tilde{d} := d\otimes 1 + \pm \delta \otimes 1 + 1 \otimes \partial$ on $C(E)$, and the finite filtration on $C(E)$ by total $\Lambda$ and $E$ degrees is lowered by $\pm \delta \otimes 1 + 1 \otimes \partial$, hence a standard spectral sequence argument shows that $(C(E), \tilde{d})$ remains acyclic, now as a dg $\Lambda$-module. Let $C(E)_{\leq -n-1}$ be the dg submodule of elements with $T_S$-degree $\geq n$, corresponding to the right tail in Prop. \ref{BPR1} starting at $\omega \otimes_S T^n_S \otimes_S E$. The dg module $C(E)_{\geq -n} := C(E)/C(E)_{\leq -n-1}$ is easily seen to be semi-free, and so the quasi-isomorphism $\Lambda \xrightarrow{\sim} R$ passes to a quasi-isomorphism 
\[
\Lambda \otimes_{\Lambda} C(E)_{\geq -n} \xrightarrow{\sim} R \otimes_{\Lambda} C(E)_{\geq -n}
\]
and so, increasing $n$, one sees that $R \otimes_{\Lambda} C(E)$ is also acyclic. Its differential takes the form $d + \partial$ with $d$ given by
\begin{equation}\label{differential}
\begin{cases}
    d(\overline{1} \otimes \underline{\eta} \otimes m) & = \sum_{i} \overline{1} \otimes \partial_{\eta_i} \cdot \underline{\eta} \otimes \tau(\eta_i) \cdot m\\
    d(\overline{1} \otimes m) & = \overline{1} \otimes \Omega^{\vee} \otimes \Omega \cdot m \\
    d(\overline{1} \otimes \Omega^{\vee} \otimes \partial_{\underline{\eta}} \otimes m) & = \sum_{i} \overline{1} \otimes \Omega^{\vee} \otimes \partial_{\eta_i} \partial_{\underline{\eta}} \otimes \tau(\eta_i) \cdot m.
\end{cases}
\end{equation}
extended $R$-linearly; hence $R \otimes_{\Lambda} C(E)$ agrees in high degree with $\Gamma_R \otimes_R \overline{E} \xrightarrow{\sim} N$, and so we have constructed a complete resolution of $N^{st}$ over $R$.

\begin{prop}[Buchweitz-Pham-Roberts, \cite{BuchweitzCI}]\label{completeresol} There is a complete resolution of $N^{st}$ over $R$ given by totalising the bicomplex
\[
\xymatrix@C1.6pc@R0.5pc{ \cdots \ar[r] & \Gamma^1_R \otimes_R \overline{E}  \ar[r] &  \overline{E} \ar[r] & \overline{\omega} \otimes_R \overline{E} \ar[r] &  \overline{\omega} \otimes_R T_R^1 \otimes_R \overline{E}  \ar[r] & \cdots
}
\]
\end{prop}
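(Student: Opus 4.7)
The proof plan follows directly from the discussion leading up to the statement, and amounts to organizing the three moving pieces: the ``geometric'' complete resolution $C(E)$ of the underlying $S$-projective $\Lambda$-module $E$ (ignoring internal differentials), the Koszul differential $\delta$ on $\Lambda$ presenting $R$, and the $S$-linear differential $\partial$ on $E$ resolving $N$. The first step is to invoke Proposition \ref{BPR1}, applied to the graded $\Lambda$-module $E$ viewed without its internal differential, to obtain a minimal complete resolution $(C(E), d) = (C(S) \boxtimes_S E, d \otimes 1)$ of $E$ over $\Lambda$, with $d$ as in formula $(2)$.

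Next I turn on the internal differentials: define $\tilde d = d \pm (\delta \otimes 1) + (1 \otimes \partial)$ on the underlying bigraded module of $C(E)$. The sign conventions are forced by requiring $\tilde d^2 = 0$ and $\tilde d$ to be a derivation compatible with the dg $\Lambda$-action; once this is set up, $(C(E), \tilde d)$ is a dg $\Lambda$-module. The key claim is that it is still acyclic. Here I would filter $C(E)$ by total $\Lambda$- and $E$-degree; the associated graded differential is exactly $d$, so the $E_0$ page is the (acyclic) complex of Proposition \ref{BPR1}. The filtration is exhaustive and bounded below in each total degree, so the spectral sequence converges and forces acyclicity of the total complex.

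The main obstacle is the base change from $\Lambda$ to $R$. The naive tensor product $R \otimes_\Lambda C(E)$ might fail to inherit acyclicity since $C(E)$ is unbounded. The fix is the truncation argument already sketched: cut off the right tail to form $C(E)_{\geq -n}$, which is a bounded-above semi-free dg $\Lambda$-module (built from copies of $\Lambda \boxtimes_S E \cong \Lambda \otimes_S E$). For such semi-free truncations, the quasi-isomorphism $\Lambda \xrightarrow{\sim} R$ is preserved by $- \otimes_\Lambda C(E)_{\geq -n}$, giving an acyclic $R \otimes_\Lambda C(E)_{\geq -n}$; letting $n \to \infty$ upgrades this to acyclicity of $R \otimes_\Lambda C(E)$. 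A direct computation of the induced differential, using that $\delta \otimes 1$ is killed by passing to $R$ and only the contraction term $\tau$ survives from $d$, yields exactly formula $(3)$.

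Finally I need to identify the result as a complete resolution of $N^{st}$. The far left tail of $R \otimes_\Lambda C(E)$, in $\Gamma_R$-degree $\geq 1$, is by construction the Avramov-Buchweitz resolution $(\Gamma_R \otimes_R \overline E, \partial_\tau + 1 \otimes \partial) \xrightarrow{\sim} N$ recalled at the start of the discussion. Hence $R \otimes_\Lambda C(E)$ is an unbounded acyclic complex of free graded $R$-modules whose positive truncation resolves $N$, which is precisely the definition of a complete resolution of $N^{st}$. Totalising the bicomplex displayed in the statement recovers this complex on the nose, completing the argument.
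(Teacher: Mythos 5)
Your proposal follows the paper's own argument step for step: perturb the differential on $C(S)\boxtimes_S E$ by $\pm\,\delta\otimes 1 + 1\otimes\partial$, use the finite filtration by total $\Lambda$- and $E$-degrees to see the perturbed complex stays acyclic, base change along $\Lambda \xrightarrow{\sim} R$ via the semi-free truncations $C(E)_{\geq -n}$ and let $n$ grow, then identify the left tail with the Avramov--Buchweitz resolution $\Gamma_R\otimes_R\overline{E} \xrightarrow{\sim} N$ to conclude it is a complete resolution of $N^{st}$. The only slips are cosmetic and harmless: minimality of $C(E)$ is neither claimed in the paper nor needed, and $R\otimes_\Lambda C(E)_{\geq -n}$ is not literally acyclic but only acyclic away from the truncation edge, which is all your limit-in-$n$ step actually requires.
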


\subsection{Modules of complexity two}\label{complexitytwo}
In this section we describe the complete resolutions and Auslander-Reiten component of indecomposable MCM modules of complexity two.

For each point $p_i \in X$, let $L_i = \Gamma_{\geq 0}(X, \mathcal{O}_{p_i}) = k[X]/I(p_i)$ be the corresponding point module. We may picture $X = V(Q_0) \cap V(Q_\infty)$ as
\[
\xymatrix{ 
&  &  & \\
& p_1 \ar@{-}[u]^{l_1} \ar@{-}[l] \ar@{-}[d] \ar@{-}[r] & p_2 \ar@{-}[u] \ar@{-}[d] \ar@{-}[r]^{l_2} & \\
& p_4 \ar@{-}[d] \ar@{-}[l]^{l_4} \ar@{-}[r] & p_3\ar@{-}[d]^{l_3} \ar@{-}[r] & \\
&  &  & \\
}
\]
Here $l_i$ stands for both the line and the corresponding linear form, where 
\begin{align*}
& Q_0 = (x-y)(x+y) = l_1l_3\\
& Q_\infty = (y-z)(y+z) = l_2l_4
\end{align*}
and from the above picture we have $I(p_i) = (l_i, l_{i+1})$, using cyclic indexing. Note that $L_i \cong S/(l_i, l_{i+1})$ has depth $1$ and so is MCM.

Let $\mathbb{K} = \Lambda_S(dQ_0, dQ_\infty) \xrightarrow{\sim} R$ and $E^i = \Lambda_S(dl_i, dl_{i+1}) \xrightarrow{\sim} S/(l_i, l_{i+1}) = L_i$ be the Koszul resolutions, where $|dl_i| = (1,1)$. We have a morphism of dg algebras $\Lambda(dQ_0, dQ_\infty) \to \Lambda(dl_i, dl_{i+1})$ which sends $dQ_j$ to the unique element of the form $l_{j_1}dl_{j_2}$ such that $Q_j = l_{j_1} l_{j_2}$. Similarly, the Koszul complex $E^0 = \Lambda_S(dx, dy, dz) \xrightarrow{\sim} k$ receives a dg morphism from $\Lambda(dQ_0, dQ_\infty)$ by sending $dQ_j \mapsto \gamma_j dx + \epsilon_j dy + \zeta_j dz$, for some choice of linear forms $\gamma_j, \epsilon_j, \zeta_j$ with $Q_j = \gamma_j x + \epsilon_j y + \zeta_j z$.

The morphism of dg algebras $\mathbb{K} \to E^i$ for $0 \leq i \leq 4$ induces a dg module structure on the latter over the former. It follows that the complete resolutions for $k^{st}$ and $L_i$ are given by Prop \ref{completeresol}. Note that we can take $\Omega = dQ_0 dQ_\infty$ for top form, so that $\omega \cong \Lambda (4)[-2]$. We can picture the above resolutions as totalising the bicomplexes 

\[
\xymatrix@C0.5pc@R1.3pc{ \ddots \ar[d] & \ddots \ar[d] & \ddots \ar[d] & \\
 \overline{E}^0_3 \otimes \Gamma^1 \ar[r] & \overline{E}^0_2 \otimes \Gamma^1 \ar[r] \ar[d] & \overline{E}^0_1 \otimes \Gamma^1 \ar[r] \ar[d] & \overline{E}^0_0 \otimes \Gamma^1 \ar[d]\\
& \overline{E}^0_3 \ar[r] & \overline{E}^0_2 \ar[r] & \overline{E}^0_1 \ar[r] \ar[d] & \overline{E}^0_0 \ar[d] \\
&&& \overline{\omega} \otimes \overline{E}^0_3  \ar[r] & \overline{\omega} \otimes \overline{E}^0_2 \ar[r] \ar[d] & 
\overline{\omega} \otimes \overline{E}^0_1  \ar[r] \ar[d] & 
\overline{\omega} \otimes \overline{E}^0_0  \ar[d]\\
&&&&  \overline{\omega} \otimes T^1 \otimes \overline{E}^0_3 \ar[r] &  \overline{\omega} \otimes T^1 \otimes \overline{E}^0_2 \ar[r] \ar[d] & 
 \overline{\omega} \otimes T^1 \otimes \overline{E}^0_1 \ar[r] \ar[d] & 
 \overline{\omega} \otimes T^1 \otimes \overline{E}^0_0 \ar[d]\\
&&&&& \ddots & \ddots & \ddots
}
\]
\[
\xymatrix@C0.5pc@R1.3pc{ \ddots \ar[d] & \ddots \ar[d] & \\
 \overline{E}^i_2 \otimes \Gamma^1 \ar[r]  & \overline{E}^i_1 \otimes \Gamma^1 \ar[r] \ar[d] & \overline{E}^i_0 \otimes \Gamma^1 \ar[d]\\
&  \overline{E}^i_2 \ar[r] & \overline{E}^i_1 \ar[r]  & \overline{E}^i_0 \ar[d] \\
&&&  \overline{\omega} \otimes \overline{E}^i_2 \ar[r]  & 
\overline{\omega} \otimes \overline{E}^i_1  \ar[r] \ar[d] & 
\overline{\omega} \otimes \overline{E}^i_0  \ar[d]\\
&&&&    \overline{\omega} \otimes T^1 \otimes \overline{E}^i_2 \ar[r]  & 
 \overline{\omega} \otimes T^1 \otimes \overline{E}^i_1 \ar[r] \ar[d] & 
 \overline{\omega} \otimes T^1 \otimes \overline{E}^i_0 \ar[d]\\
&&&&& \ddots & \ddots
}
\]
which yield $C(k^{st})$ and $C(L_i)$ respectively.
\begin{lem} The above complete resolutions of $k^{st}$ and $L_i$ are minimal. 
\end{lem}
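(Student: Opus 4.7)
The plan is to show that all entries of the differential $d + \partial$ on the complete resolution from Proposition \ref{completeresol} lie in the maximal ideal $\mathfrak{m} = R_{\geq 1}$, so that $k \otimes_R C$ has zero differential. There are essentially three kinds of contributions to inspect: the internal differential $\partial$ coming from the $S$-free Koszul resolution $E \xrightarrow{\sim} N$; the pieces of $d$ appearing in the left and right tails of (\ref{differential}), which are built from the $\mathbb{K}$-module structure map $\tau$; and the single connecting map $\overline{E} \to \overline{\omega} \otimes_R \overline{E}$ given by multiplication by $\Omega = dQ_0 \wedge dQ_\infty$.

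First, both $E^0 = \Lambda_S(dx, dy, dz)$ and $E^i = \Lambda_S(dl_i, dl_{i+1})$ are minimal Koszul complexes, so $\partial$ has entries that are the linear forms $x, y, z$ or $l_i, l_{i+1}$ respectively, all in $\mathfrak{m}$. Next, I would examine the dg $\mathbb{K}$-module structure used to define $\tau$. For $E^0$ the map $\mathbb{K} \to E^0$ sends $dQ_j$ to a lift of the form $\gamma_j dx + \epsilon_j dy + \zeta_j dz$, and since $Q_j$ is homogeneous quadratic the coefficients $\gamma_j, \epsilon_j, \zeta_j$ are forced to be linear forms. For $E^i$ the structure map sends $dQ_j \mapsto l_{j_1} dl_{j_2}$ with $Q_j = l_{j_1} l_{j_2}$, again with linear coefficient. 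Thus in both cases every summand $\tau(\eta_i) \cdot m$ contributing to the left-tail pieces of $d$ (and symmetrically to the right-tail pieces via the duality $\omega \cong \Lambda(4)[-2]$) is multiplication by a linear form, hence an entry of $\mathfrak{m}$.

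The only remaining piece is the connecting map $d(\overline{1} \otimes m) = \overline{1} \otimes \Omega^{\vee} \otimes \Omega \cdot m$. Here $\Omega \cdot m$ is the action of the top form $dQ_0 dQ_\infty \in \mathbb{K}$ on $m$, which in view of the previous paragraph factors through a product of two elements each with coefficients in $\mathfrak{m}$; explicitly, its matrix entries are products of the form $\gamma_0 \gamma_\infty, \ldots$ for $k^{st}$ and $l_3 l_4, \ldots$ for $L_i$, all lying in $\mathfrak{m}^2 \subset \mathfrak{m}$.

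I expect no serious obstacle here; the main thing to be careful about is bookkeeping: tracking that the differential displayed in (\ref{differential}) really decomposes as I describe on the tails and the connecting piece, and that the Hopf/exterior-tensor identifications used to go from Prop.\ \ref{BPR1} to Prop.\ \ref{completeresol} preserve this degree bookkeeping. Once that is confirmed, minimality of the total complex follows immediately, since $(d+\partial) \otimes_R k = 0$ by the observations above, and the conclusion then holds for both $C(k^{st})$ and $C(L_i)$ simultaneously.
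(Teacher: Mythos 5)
Your argument is correct and is essentially the paper's own proof: the paper likewise observes that the Koszul resolutions $E^0$, $E^i$ are minimal and that the action of $dQ_j$ on them vanishes after applying $k \otimes_S -$, so every term of the differential in equation (\ref{differential}) has entries in the maximal ideal. Your extra attention to the connecting map $\overline{1}\otimes m \mapsto \overline{1}\otimes\Omega^{\vee}\otimes\Omega\cdot m$ is subsumed in that observation, since $\Omega = dQ_0\,dQ_\infty$ acts through the $dQ_j$ actions.
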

\begin{proof}
The Koszul resolutions $E^i \xrightarrow{\sim} L_i$ and $E^0 \xrightarrow{\sim} k$ are minimal and the action of $dQ_j: E^i \to E^i(2)[1]$ vanishes after tensoring with $k \otimes_S -$, so it follows from equation \ref{differential} that $\left(R \otimes_{\Lambda} C(E^i), d \right)$ is a minimal complete resolution for $i = 0,1,2,3,4$.
\end{proof}


The MCM modules $k^{st}$ and $L_i$ account for essentially all modules of complexity $2$. The following will be proven in section \ref{proofsection}.
\begin{thm}\label{pointmodules}
Let $M$ be an indecomposable graded MCM module of complexity $2$.
Then $M$ is one of the following:
\begin{enumerate}[1.]
\item $syz_R^n \big(k^{st}\big)(m)$ 
\item $syz_R^n \big(L_i\big)(m)$
\end{enumerate}
for $i = 1,2,3,4$ and $m,n \in \Z$.
\end{thm}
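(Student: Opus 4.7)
The plan is to deduce Theorem \ref{pointmodules} from the triangle equivalence $E : \underline{\rm MCM}(gr R)^{op} \xrightarrow{\cong} D^b(kQ)$ of Theorem \ref{E} (to be established in section \ref{proofsection}) together with the description of $\Gamma(D^b(kQ))$ in Theorem \ref{ARquiverQ}. As a $k$-linear triangle equivalence between Hom-finite Krull-Schmidt categories equipped with Serre functors, $E$ intertwines the Serre functors, hence the AR translates $\tau$, and hence the partitions of the two AR quivers into connected components.

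First I would observe that complexity $2$ indecomposable MCM modules correspond under $E$ precisely to the objects of the preprojective-preinjective components $\bigcup_n \mathcal{PI}[n]$. Indeed, objects in a tube of rank $r$ are $\tau$-periodic, so the corresponding MCM modules $M$ satisfy $syz_R^r(M) \cong M(-r)$, giving eventually periodic and hence bounded Betti numbers --- complexity $1$. Conversely, objects in $\mathcal{PI}[n]$ have infinite $\tau$-orbits, so the corresponding MCM modules have unbounded Betti numbers; since $R$ is a codimension $2$ complete intersection the complexity is bounded above by $2$, hence must equal $2$.

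Next I would match the five indecomposable projective $kQ$-modules $P(0), P(1), P(2), P(3), P(4)$ with the five seed MCM modules $k^{st}, L_1, L_2, L_3, L_4$ under $E$. This identification is exactly what the tilting object constructed in section \ref{proofsection} provides: one verifies that $T = k^{st} \oplus L_1 \oplus L_2 \oplus L_3 \oplus L_4$ is (up to an appropriate degree shift) a tilting object in $\underline{\rm MCM}(gr R)$ whose opposite endomorphism algebra is $kQ$, via stable Hom and Tate-Ext computations using the minimal complete resolutions recorded in section \ref{complexitytwo}. Granting this, since the $\tau, [1]$-orbits of $\{P(0), \dots, P(4)\}$ exhaust $\bigcup_n \mathcal{PI}[n]$, every complexity $2$ indecomposable MCM module is of the form $\tau^a[b](M_i)$ for some seed $M_i$ and $a, b \in \mathbb{Z}$. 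Using $\tau = syz_R^1(-)(1)$ and $[1] = syz_R^{-1}$ on the MCM side, we compute $\tau^a[b](M_i) = syz_R^{a-b}(M_i)(a)$; as $(a, b)$ ranges over $\mathbb{Z}^2$ this produces exactly the set $\{ syz_R^n(M_i)(m) : m, n \in \mathbb{Z} \}$, and modulo the internal degree shift $(m)$ we recover the list in the theorem.

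The main obstacle in the plan is the second step --- identifying the five MCM seeds with the indecomposable projectives of $kQ$ under $E$. This requires showing that $k^{st} \oplus L_1 \oplus L_2 \oplus L_3 \oplus L_4$ is a tilting object with the correct endomorphism algebra, which in turn rests on controlled stable Hom and Tate-Ext computations between these five modules, made feasible precisely by the explicit complete resolutions developed earlier in section \ref{complexitytwo}.
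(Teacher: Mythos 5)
Your proposal is correct and follows essentially the same route as the paper: pass through the equivalence ${\rm E}$, identify $k^{st}, L_1,\dots,L_4$ with the indecomposable projectives via the tilting module of Theorem \ref{tiltingobject}, observe that tube objects are $\tau$-periodic and hence correspond to complexity-one modules, and translate the preprojective-preinjective objects $\tau^aP(i)[b]$ back into $syz_R^{a-b}(-)(a)$ of the seeds. The only small divergence is your description of how the tilting property is verified: the paper establishes it via Orlov's theorem and a helix mutation (plus one small ${\rm Ext}$ computation) rather than by direct stable Hom/Tate-Ext calculations with complete resolutions, but since you defer to that construction in Section \ref{proofsection} anyway, this does not affect the argument.
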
 

\begin{thm} The Auslander-Reiten component $\mathcal{C}$ in $\underline{{\rm MCM}}(gr R)$ containing $k^{st}, L_i$ has the form
\[
\xymatrix@R.8pc@C1.6pc{
&  L_1(1)[-1] \ar[ddr] && L_1 \ar[ddr] && L_1(-1)[1] \ar[ddr] && \cdots\\
& L_2(1)[-1] \ar[dr] && L_2 \ar[dr] && L_2(-1)[1]  \ar[dr] && \cdots\\
\cdots \ar[uur] \ar[ur] \ar[dr] \ar[ddr] && k^{st}(1)[-1] \ar[uur] \ar[ur] \ar[dr] \ar[ddr] && k^{st}\ar[uur] \ar[ur] \ar[dr] \ar[ddr] && k^{st}(-1)[1]\ar[uur] \ar[ur] \ar[dr] \ar[ddr] &  \\
& L_3(1)[-1] \ar[ur] && L_3 \ar[ur] && L_3(-1)[1] \ar[ur] && \cdots\\
& L_4(1)[-1] \ar[uur] && L_4 \ar[uur] && L_4(-1)[1] \ar[uur] && \cdots \\
}
\]

and all indecomposables of complexity two fall in one of $\mathcal{C}[n]$ for some $n \in \Z$.
\end{thm}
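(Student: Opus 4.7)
The plan is to combine Theorem \ref{pointmodules} with an explicit determination of the almost-split triangles at $k^{st}$ and at each $L_i$. Since $R$ is $1$-dimensional Gorenstein with $\omega_R = R(1)$, the Serre functor on $\underline{{\rm MCM}}(gr R)$ is $\mathbb{S} = (1)$ and $\tau = (1)[-1]$, whence $\tau^a k^{st} = k^{st}(a)[-a]$ and $\tau^a L_i = L_i(a)[-a]$ for all $a \in \Z$; these five $\tau$-orbits comprise exactly the rows of the claimed diagram. The second assertion of the theorem then follows at once: by Theorem \ref{pointmodules} any complexity-two indecomposable is of the form $\tau^a(k^{st})[b]$ or $\tau^a(L_i)[b]$ for some $a, b \in \Z$, and hence lies in $\mathcal{C}[b]$ once $k^{st}$ and all $L_i$ are shown to belong to the single component $\mathcal{C}$.

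For the shape of $\mathcal{C}$, the key step is to establish the two almost-split triangles
\[
k^{st}(1)[-1] \to \bigoplus_{i=1}^4 L_i(1)[-1] \to k^{st} \to k^{st}(1), \qquad L_i(1)[-1] \to k^{st}(1)[-1] \to L_i \to L_i(1)
\]
in $\underline{{\rm MCM}}(gr R)$. By Proposition \ref{Ringel} these pin down every arrow incident to $k^{st}$ and to each $L_i$, and $\tau$-translation then propagates the structure to all of $\mathcal{C}$, forcing the claimed $\Z\widetilde{D}_4$ shape with no further vertices or arrows.

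For the triangle ending at $L_i$, the approach is to compute $\underline{{\rm Hom}}(k^{st}(1)[-1], L_i) \cong \widehat{\rm Ext}^1_{gr R}(k, L_i(-1))$ via the minimal complete resolution $R \otimes_{\mathbb{K}} C(E^0)$ of $k^{st}$ constructed earlier in the section, with the goal of showing this space is $1$-dimensional and lies in the radical (the latter being automatic since $k^{st}(1)[-1] \not\cong L_i$). The unique nonzero class is then irreducible by the standard argument that any non-isomorphism factorization of a generator of a one-dimensional ${\rm Irr}$-space must split, so it classifies the desired almost-split extension. The triangle at $k^{st}$ is then recovered via AR duality and Proposition \ref{Ringel}, which gives $\dim {\rm Irr}(L_i(1)[-1], k^{st}) = \dim {\rm Irr}(k^{st}(1)[-1], L_i) = 1$; this exhibits each $L_i(1)[-1]$ as a summand of multiplicity one in the middle, and a dimension count against the Euler form on the stable Grothendieck group rules out any further summands.

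The main obstacle is the explicit computation of $\widehat{\rm Ext}^1_{gr R}(k, L_i(-1))$, which requires careful passage through the bicomplex of Proposition \ref{completeresol} and tracking the interaction of the Koszul differential on $E^0$ with the $\delta$-differential on $\mathbb{K}$ in precisely the right internal degree. Once this is in hand, Serre duality and closure of $\mathcal{C}$ under $\tau$ complete the picture, and the classification of complexity-two indecomposables in $\bigcup_{n \in \Z} \mathcal{C}[n]$ is immediate from Theorem \ref{pointmodules}.
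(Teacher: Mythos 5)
Your plan contains a concrete error at its central step: the almost-split triangle ending at $k^{st}$ does not have middle term $\bigoplus_{i=1}^4 L_i(1)[-1]$; it must be $\bigoplus_{i=1}^4 L_i$. Indeed, your own second triangle exhibits irreducible maps $k^{st}(1)[-1] \to L_i$, and the correct Auslander-Reiten duality for irreducible maps, ${\rm Irr}(M, Z) \cong {\rm Irr}(\tau Z, M)$ applied with $Z = k^{st}$, $\tau Z = k^{st}(1)[-1]$, $M = L_i$, then forces each $L_i$ (unshifted) to occur in the middle of the triangle ending at $k^{st}$, in agreement with the arrows $L_i \to k^{st}$ in the very diagram you are trying to prove. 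The identity you invoke instead, $\dim {\rm Irr}(L_1(1)[-1], k^{st}) = \dim {\rm Irr}(k^{st}(1)[-1], L_1)$, is not an instance of this duality and is false here (one checks ${\rm Irr}(L_i(1)[-1], k^{st}) = 0$, since the only arrows out of $L_i(1)[-1]$ go to $k^{st}(1)[-1]$). As written, your two proposed triangles are mutually inconsistent, and carrying out your Proposition \ref{Ringel} argument with them would produce a quiver different from the one in the statement.

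Even after correcting the middle term, there remain genuine gaps. Knowing $\dim_k \underline{{\rm Hom}}_{gr R}(k^{st}(1)[-1], L_i) \cong \dim_k \widehat{\rm Ext}^1_{gr R}(k, L_i(-1)) = 1$ (a computation you explicitly defer, and which is the heart of your approach) does not imply that the nonzero map is irreducible: a generator of a one-dimensional Hom space between non-isomorphic indecomposables can perfectly well factor nontrivially, so your ``standard argument'' proves nothing as stated. Nor does it show that $k^{st}(1)[-1]$ exhausts the middle term of the triangle ending at $L_i$: you must also rule out irreducible maps into $L_i$ from all other indecomposables, in particular from the complexity-one ($\tau$-periodic) modules, and the ``dimension count against the Euler form on the stable Grothendieck group'' is never set up on the MCM side. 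For comparison, the paper proves Theorem \ref{pointmodules} and the present theorem simultaneously in Section \ref{proofsection}: the contravariant tilting equivalence ${\rm E}$ sends $k^{st}, L_i$ to $P(0), P(i)$, so $\mathcal{C}$ is the image of the known component $\mathcal{PI}$ of ${\rm D}^b(kQ)$, and complexity-two indecomposables are exactly the non-regular ones. Since your proposal already assumes Theorem \ref{pointmodules}, whose proof in the paper goes through that same equivalence, the direct AR-theoretic route adds work rather than avoiding it, and in its present form it does not go through.
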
 

\subsection{Modules of complexity one}\label{complexityone}

Assuming the equivalence of categories ${\rm E}: \underline{{\rm MCM}}(gr R)^{op} \xrightarrow{\cong} {\rm D}^b(kQ)$, we may ponder the construction of MCM modules corresponding to regular representations of $kQ$. Such modules are characterized by being $\tau$-periodic, and we have seen that they vary in families.

Since ${\rm E} \circ \tau^{-1} = \tau \circ {\rm E}$ and $\tau = (1)[-1]$ in $\underline{{\rm MCM}}(gr R)$, regular representations correspond to $\tau$-periodic MCM modules, or equivalently modules with periodic minimal resolutions and thus of complexity one. Eisenbud characterized such modules over complete intersections. Recall that a matrix factorization of a regular element $f$ in a graded Gorenstein commutative ring $B$ is a pair of square matrices $(\varphi, \psi)$ with homogeneous entries in $B$ such that $\varphi \psi = f \cdot {\rm I}_n = \psi \varphi$, and which yield homogeneous maps
\[
\bigoplus_{i=1}^{n} B(n_i) \xrightarrow{\varphi} \bigoplus_{i=1}^{n} B(m_i) \xrightarrow{\psi} \bigoplus_{i=1}^{n} B(n_i + |f|)
\]
The matrices $(\varphi, \psi)$ descend to differentials on a $2$-periodic complex of graded free modules $\overline{B} = B/(f)$, which is acyclic by regularity of $f$, and so yields a complete resolution of the then MCM module ${\rm coker}(\overline{\varphi})$ over $\overline{B}$.

\begin{prop}[Eisenbud, \cite{MR570778} Thm 5.2]
Let $R = S/I$ be a complete intersection with $S$ regular local. Let $M$ be an MCM $R$-module with minimal free resolution ${\rm F}_*$ of bounded rank. Then there is a regular sequence $(g_1, ..., g_c)$ generating $I$ such that ${\rm F}_*$ is given by a matrix factorization of $g_c$ over $S/(g_1, ..., g_{c-1})$.
\end{prop}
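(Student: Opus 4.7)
The plan is to use Eisenbud's theory of chi-operators on complete intersections. First, lift the minimal $R$-free resolution $\mathbf{F}_* \to M$ to a sequence $\tilde{\mathbf{F}}_*$ of free $S$-modules with lifted differentials $\tilde d$. Since $\tilde d^2$ vanishes modulo $I$, one can write $\tilde d^2 = \sum_{i=1}^c f_i\, \tilde\chi_i$ for some maps $\tilde\chi_i : \tilde F_n \to \tilde F_{n-2}$, which after reduction mod $I$ yield chain endomorphisms $\chi_i : \mathbf{F}_* \to \mathbf{F}_*[-2]$ over $R$. By a theorem of Gulliksen, these commute up to homotopy and endow $\mathrm{Ext}^*_R(M,k)$ with the structure of a finitely generated graded module over the polynomial ring $P = k[\chi_1, \ldots, \chi_c]$ (with $|\chi_i| = 2$), whose Krull dimension equals the complexity of $M$.

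By the bounded-rank hypothesis this complexity is at most one, so the projective support variety $\mathbb{P}V(M) \subset \mathbb{P}^{c-1}$ is a finite set of points. Via the natural duality $I/\mathfrak{m}I \simeq \mathrm{Spec}\, k[\chi_1, \ldots, \chi_c]$ at the origin, a generic linear change of basis $g_i = \sum a_{ij} f_j$ inside $I$ yields a new regular sequence (regularity is Zariski open among such changes) whose last element $g_c$ corresponds to a linear form missing $\mathbb{P}V(M)$. The associated chi-operator $\chi_{g_c}$ then acts invertibly on $\mathrm{Ext}^i_R(M,k)$ for all $i \gg 0$, which forces the resolution of a sufficiently high syzygy $N$ of $M$ to be eventually $2$-periodic.

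Finally, pass to the codimension-$(c{-}1)$ complete intersection $S' = S/(g_1, \ldots, g_{c-1})$. On the high truncation resolving $N$, the lifted squared differential satisfies $\tilde d^2 = g_c \cdot \tilde\chi$ with $\tilde\chi$ invertible, and an inductive change of basis in each $\tilde F_n$ rectifies $\tilde\chi$ to the identity, giving $\tilde d^2 = g_c \cdot \mathrm{Id}$---the defining equation of a matrix factorization of $g_c$ over $S'$. Shifting back and using that $M$, being MCM, has no free summands to discard, one realizes the full minimal resolution of $M$ as such a matrix factorization. The main obstacle is this rectification step: making the basis changes compatible across all homological degrees, which is delicate but made possible by the $2$-periodicity becoming manifest once $\tilde\chi$ is known to be invertible, together with the genericity argument ensuring that $g_1, \ldots, g_c$ really form a regular sequence generating $I$ and not merely a generating set.
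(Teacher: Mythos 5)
The paper offers no proof of this proposition --- it is quoted from Eisenbud's article as an external result --- so there is nothing internal to compare against; what you have written is essentially the standard modern form of Eisenbud's own argument, with Gulliksen--Avramov support varieties replacing his direct argument that some CI operator is eventually surjective. The main line is sound: the operators $\chi_i$ with $\tilde d^2 = \sum f_i \tilde\chi_i$, finite generation of ${\rm Ext}^*_R(M,k)$ over $k[\chi_1,\dots,\chi_c]$ with Krull dimension equal to the complexity, complexity $\le 1$ making the support a finite union of lines, a generator $g_c$ whose linear form avoids those lines acting bijectively on ${\rm Ext}^i$ for $i \gg 0$, and then over $S'=S/(g_1,\dots,g_{c-1})$ the relation $\tilde d^2 = g_c\tilde\chi$ with $\tilde\chi$ invertible in high degrees. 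The rectification you worry about is in fact routine: choosing automorphisms $u_n$ of the lifted free modules by the recursion $u_n = \tilde\chi_n^{-1}u_{n-2}$ turns $\tilde\chi$ into the identity, and squareness of the resulting matrices plus the second identity $\psi\varphi = g_c{\rm I}$ follow automatically from $\varphi\psi = g_c{\rm I}$, $g_c$ a nonzerodivisor, and a rank count; minimality is preserved since only base changes were used.

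The genuine weak point is the final passage from the high syzygy $N$ back to $M$ itself, which is exactly where the quoted statement (about ${\rm F}_*$ at homological degree $0$, not just eventually) has content. Your justification ``$M$, being MCM, has no free summands to discard'' is false as stated --- free modules are MCM --- and, more importantly, the step needs an argument you do not supply: one must observe that the $2$-periodic complex attached to a matrix factorization $(\varphi,\psi)$ of the nonzerodivisor $g_c$ over $S'$ is acyclic in both directions over $R=S'/(g_c)$ (this follows from $\varphi\psi=\psi\varphi=g_c{\rm I}$ and $g_c$ being a nonzerodivisor on $S'$), hence is a minimal complete resolution of $N$; then, since $R$ is Gorenstein and $M$ is MCM, $M$ agrees up to free summands with the $n$-th cosyzygy of $N={\rm syz}^n_R(M)$ computed from that complete resolution, so the minimal resolution of $M$ (after splitting off any free summand, which the statement implicitly ignores) is the corresponding truncation of the $2$-periodic complex, i.e.\ is given by the matrix factorization, possibly with $\varphi$ and $\psi$ interchanged. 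Without this acyclicity-and-cosyzygy argument, periodicity of a far tail of the resolution says nothing about ${\rm F}_*$ near degree $0$; with it, your proof closes up.
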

A version of this theorem holds for graded rings and modules, and this inspires the following construction. Recall that the pencil of quadrics through $X$ has singular quadrics given by
\begin{align*}
    & Q_0 = x^2 - y^2\\
    & Q_1 = x^2 - z^2\\
    & Q_{\infty} = y^2 - z^2
\end{align*}
with general quadric given by $Q_{t} = t_0Q_{\infty} + t_1 Q_0$ for $t = (t_0, t_1) \in k^2\setminus \{0 \}$. Let $(\Phi_{t}^{+}, \Phi_{t}^{-})$ be the pair of matrices over $S$ given by
\begin{align*}
\Phi_{t}^{+} = & \begin{bmatrix} t_1 (x+y) & y+z \\
										 t_0(z-y) & x-y \end{bmatrix}\\
										 \\
\Phi_{t}^{-} = & \begin{bmatrix} x-y & -y-z \\
                                        t_0(y-z)  & t_1(x+y)  \end{bmatrix}
\end{align*}
We have $\Phi_{t}^{+} \Phi_{t}^{-} = Q_{t} \cdot {\rm I}_2 = \Phi_{t}^{-} \Phi_{t}^{+}$. Letting $s = (s_0, s_1)$ correspond to a different point in $\mathbb{P}^1$, the sequence $(Q_{s}, Q_{t})$ is regular and the pair $(\Phi^{+}_{t}, \Phi^{-}_{t})$ defines a matrix factorization of $Q_{t}$ over $S' = S/(Q_{s})$. The module
\[
N_{t} = {\rm coker}\big(R(-1)^{\oplus 2} \xrightarrow{\overline{\Phi}_{t}^{+}} R^{\oplus 2} \big)
\]
is then an MCM module over $R \cong S'/(Q_t)$, with isomorphism class independent of representative of $[t_0: t_1] \in \mathbb{P}^1$. We call the family $\mathcal{Q} = \{N_{t} \}_{t \in \mathbb{P}^1}$ the {\bf fundamental family}.

\begin{prop}\label{Nindec} We have $\underline{{\rm End}}_{gr R}(N_{t}) = k$ for every $t \in \mathbb{P}^1$, thus the modules $N_{t}$ are indecomposable.
\end{prop}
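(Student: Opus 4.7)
The plan is to compute $\mathrm{Hom}_{gr R}(N_t, N_t)$ directly from the matrix factorization $(\Phi_t^+, \Phi_t^-)$, show it equals $k$, and descend to the stable category. A degree-zero endomorphism of $N_t$ lifts to a morphism of the minimal $2$-periodic complete resolution, hence to a pair of scalar $2 \times 2$ matrices $\alpha, \beta$ over $k$ satisfying $\alpha \Phi_t^+ = \Phi_t^+ \beta$. Since the entries of $\Phi_t^+$ are linear forms and the ideal $(Q_0, Q_\infty)$ starts in degree $2$, this identity may be checked in $S$ by comparing coefficients of $x, y, z$ in each of the four matrix positions.

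Expanding the matrix equation entrywise, the key observation is that the off-diagonal positions $(1,2)$ and $(2,1)$ yield redundant constraints on $\alpha_{12}$ and $\alpha_{21}$: matching the $y$-coefficient against the combination of the $x$- and $z$-coefficients produces equations of the form $2\alpha_{12} = 0$ and $2\alpha_{21} = 0$. The hypothesis $\mathrm{char}\, k \neq 2$ thus forces $\alpha$ to be diagonal. The diagonal positions $(1,1)$ and $(2,2)$ then impose $\alpha_{11} = \alpha_{22}$ and $\beta = \alpha$, so $\alpha = \lambda\, \mathrm{I}_2$ for some $\lambda \in k$. Consequently $\mathrm{Hom}_{gr R}(N_t, N_t) \cong k$.

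To pass to the stable endomorphism ring, I observe that the minimal free resolution of $N_t$ has differentials $\overline{\Phi}_t^{\pm}$ with entries in the graded maximal ideal, so $N_t$ has no free $R$-summand. Therefore the canonical projection $\mathrm{Hom}_{gr R}(N_t, N_t) \twoheadrightarrow \underline{\mathrm{Hom}}_{gr R}(N_t, N_t)$ sends $\mathrm{id}_{N_t}$ to a nonzero class, and $\underline{\mathrm{End}}_{gr R}(N_t) = k$. Since this endomorphism ring is local, $N_t$ is indecomposable by Lemma \ref{krull-schmidt}.

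The main obstacle I anticipate is ensuring the argument is uniform in $t \in \mathbb{P}^1$: when $t_0 = 0$ or $t_1 = 0$, the matrix $\Phi_t^+$ acquires a zero entry and certain coefficient-comparison equations (such as $\alpha_{22} t_0 = t_0 \beta_{11}$) become vacuous. In each degenerate case, one must verify that the remaining nontrivial equations still force $\alpha$ to be scalar; this amounts to a short explicit check that the $2\alpha_{12} = 0$ and $2\alpha_{21} = 0$ equations persist independently of whether $t_0$ or $t_1$ vanishes, which is visible from the entries $y+z$ and $x-y$ of the second column of $\Phi_t^+$, neither of which depends on $t$.
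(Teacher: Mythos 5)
Your proof is correct and takes essentially the same route as the paper, which likewise lifts a degree-zero endomorphism to scalar matrices $A,B$ with $B\Phi_{t}^{+}=\Phi_{t}^{+}A$, observes that the identity can be checked in $S$ (the ideal contains no linear forms), and concludes by direct calculation that $A=B=c\,\mathrm{I}_2$ for every $t\in\mathbb{P}^1$, whence ${\rm End}_{gr R}(N_t)=\underline{{\rm End}}_{gr R}(N_t)=k$; your explicit descent to the stable category just makes the paper's last equality precise. One bookkeeping slip worth fixing: the $t$-independent equations $2\alpha_{12}=0$ and $2\alpha_{21}=0$ arise from the second-column positions $(1,2)$ and $(2,2)$ (as your closing remark about the entries $y+z$ and $x-y$ correctly suggests), not from $(1,2)$ and $(2,1)$, but the system still forces $\alpha=\beta=\lambda\,\mathrm{I}_2$ uniformly in $t$ when ${\rm char}\,k\neq 2$.
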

\begin{proof}
Direct calculations show that the only scalar matrices $A, B$ fitting in a commutative diagram
\[
\xymatrix@R1pc@C2pc{
R(-1)^{\oplus 2} \ar[d]^A \ar[r]^{\Phi_{t}^{+}} & R^{\oplus 2} \ar[d]^B \\
R(-1)^{\oplus 2} \ar[r]^{\Phi_{t}^{+}} & R^{\oplus 2}
}
\]
are given by $A = B = c \cdot{\rm I}_2$ for some $c \in k$, for any $t \in \mathbb{P}^1$. Thus ${\rm End}_{gr R}(N_{t}) = \underline{{\rm End}}_{gr R}(N_{t}) = k$.
\end{proof}

Next, consider $t = 0, 1, \infty$ corresponding to the $3$ singular quadrics listed above. Then the factorizations 
\begin{align*}
    & Q_0 = x^2 - y^2 = (x-y)(x+y)\\
    & Q_1 = x^2 - z^2 = (x-z)(x+z)\\
    & Q_{\infty} = y^2 - z^2 = (y-z)(y+z)
\end{align*}
are size one matrix factorizations of $Q_{t}$, netting us additional MCM modules\footnote{The seemingly odd naming convention will become clear later.}
\begin{align*}\label{Dpm}
    D_0^{+} = {\rm coker}\big( R(-1) \xrightarrow{x-y} R\big) &&  D_0^{-} = {\rm coker}\big( R(-1) \xrightarrow{x+y} R\big)\\
    D_1^{+} = {\rm coker}\big( R(-1) \xrightarrow{x+z} R\big) &&  D_1^{-} = {\rm coker}\big( R(-1) \xrightarrow{x-z} R\big)\\
    D_{\infty}^{+} = {\rm coker}\big( R(-1) \xrightarrow{y+z} R\big) &&  D_{\infty}^{-} = {\rm coker}\big( R(-1) \xrightarrow{y-z} R\big)
\end{align*} 

\begin{prop}\label{D_t} These have the following properties:
\begin{enumerate}[i.]
\item We have $\underline{{\rm End}}_{gr R}(D_{t}^{\pm}) = k$, hence the modules $D_{t}^{\pm}$ are indecomposable.
\item We have $syz^1_R \big(D_{t}^{+} \big) \cong D_{t}^{-}(-1)$ and $syz^1_R \big( D_{t}^{-} \big) \cong D_{t}^{+}(-1)$.
\end{enumerate}
\end{prop}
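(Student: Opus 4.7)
Both parts reduce to direct computations with the explicit size-one matrix factorizations defining the modules $D_t^{\pm}$. Write $D_t^{+} = R/(l)$ and $D_t^{-} = R/(l')$, where $(l, l')$ is the factorization of the corresponding singular quadric, namely $Q_0 = (x-y)(x+y)$, $Q_1 = (x+z)(x-z)$ or $Q_\infty = (y+z)(y-z)$. Picking any $s \neq t$ in $\mathbb{P}^1$, the sequence $(Q_s, Q_t)$ is regular and $(l, l')$ is a size-one matrix factorization of $Q_t$ over $S/(Q_s)$, so the discussion preceding the proposition yields a minimal $2$-periodic graded free resolution
\[
\cdots \xrightarrow{l'} R(-3) \xrightarrow{l} R(-2) \xrightarrow{l'} R(-1) \xrightarrow{l} R \to R/(l) \to 0.
\]

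For (i), I would compute the endomorphism ring directly. Multiplication by $l$ is zero on $R/(l)$, so any element $c \in (R/(l))_0 = k$ defines a graded $R$-linear endomorphism via multiplication, and conversely every degree-zero endomorphism is determined by the image of $1 \in R/(l)$, which must lie in $(R/(l))_0 = k$. Hence $\operatorname{End}_{gr R}(D_t^{\pm}) = k$. Since the resolution above has unbounded total rank, $D_t^{\pm}$ has infinite projective dimension; it is therefore a nonzero object of $\underline{\rm MCM}(gr R)$, the identity does not factor through a projective, and $\underline{\operatorname{End}}_{gr R}(D_t^{\pm}) = k$. A field endomorphism ring forces indecomposability via Lemma \ref{krull-schmidt}.

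For (ii), I would read the first syzygy directly off the resolution. We have $syz_R^1(R/(l)) = \ker(R \twoheadrightarrow R/(l)) = lR$, and the surjection $R(-1) \twoheadrightarrow lR$ given by multiplication by $l$ has kernel $l'R(-2)$ by the $2$-periodicity (equivalently, by $\operatorname{ann}_R(l) = (l')$, which is exactly what the matrix factorization encodes). Hence $lR \cong R(-1)/l'R(-2) = (R/(l'))(-1)$, and by the conventions of the definition this is precisely $D_t^{-}(-1)$. Swapping the roles of $l$ and $l'$ gives $syz_R^1(D_t^{-}) \cong D_t^{+}(-1)$.

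There is essentially no serious obstacle here; the only point requiring attention is the bookkeeping check that the explicit sign conventions defining $D_t^{\pm}$ pair $+$ with $-$ (rather than $+$ with $+$) under $syz^1$ with a downward internal shift of one, and this is immediate from the three listed factorizations of $Q_0, Q_1, Q_\infty$.
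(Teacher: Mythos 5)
Your proof is correct and follows essentially the same route as the paper: part (i) comes down to ${\rm End}_{gr R}(R/(l)) \cong (R/(l))_0 = k$ (with the stable endomorphism ring a nonzero quotient since $D_t^{\pm}$ is not free), and part (ii) is read off the $2$-periodic resolution coming from the size-one matrix factorization $(l,l')$ of $Q_t$, exactly as the paper's ``by construction.'' Your extra bookkeeping (acyclicity via ${\rm ann}_R(l)=(l')$ and the $+/-$ pairing for each of $Q_0,Q_1,Q_\infty$) just makes explicit what the paper leaves implicit.
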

\begin{proof}
We have ${\rm End}_{gr R}(D_{t}^{\pm}) \cong \big(R/l\big)_0 = k$ with $l$ the corresponding linear form, hence $i.$ follows, and $ii.$ is by construction.
\end{proof}

We will show in section $\ref{proofsection}$, Thm \ref{maincalc} that $N_{t}$ correspond to $R_{t}$ and $D_{t}^{\pm}$ to $S_{t}^{\pm}$ under the equivalence ${\rm E}$ of Theorem \ref{E}. The next four results will then follow immediately from Prop \ref{R_t} through Theorem \ref{ARquiverQ}. 

\begin{prop}\label{N_t} Let $t \in \mathbb{P}^1 \setminus \{0, 1, \infty \}$. Then:
\begin{enumerate}[i.]
\item There are unique non-trivial extensions
\[
N_{t} \to N_{t} \langle 2 \rangle \to N_{t} \to N_{t}[1]
\]
\[
N_{t} \to N_{t} \langle r+1 \rangle \to N_{t} \langle r \rangle \to N_{t}[1]
\]

and $N_{t}\langle r \rangle$ are indecomposable for all $r \geq 2$.
\item Letting $N_{t}\langle 1 \rangle := N_{t}$,  we have $syz^1_R \big(N_{t} \langle r \rangle \big) \cong N_{t} \langle r \rangle (-1)$ for all $r \geq 1$. Hence $N_{t}\langle r \rangle$ have $1$-periodic linear minimal resolutions.
\end{enumerate}
\end{prop}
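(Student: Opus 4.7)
The strategy is to transfer both claims across the equivalence ${\rm E}: \underline{\rm MCM}(gr R)^{op} \xrightarrow{\simeq} {\rm D}^b(kQ)$ of Theorem \ref{E}, using the identification ${\rm E}(N_t) \cong R_t$ from the forthcoming Theorem \ref{maincalc}. Since ${\rm E}$ is an equivalence of triangulated categories, it preserves indecomposability, uniqueness of extensions, and the Serre functor (hence the translate $\tau$); accordingly we may \emph{define} $N_t\langle r \rangle := {\rm E}^{-1}\bigl(R_t\langle r \rangle\bigr)$, consistently with $N_t\langle 1 \rangle = N_t$.

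For part (i), Proposition \ref{R_t} produces inductively a unique non-trivial extension $0 \to R_t\langle r\rangle \to R_t\langle r+1\rangle \to R_t \to 0$ in $kQ$-mod, yielding a distinguished triangle $R_t\langle r\rangle \to R_t\langle r+1\rangle \to R_t \to R_t\langle r\rangle[1]$ in ${\rm D}^b(kQ)$. Under the contravariant equivalence ${\rm E}^{-1}$, arrows reverse and the shift is inverted, producing a distinguished triangle $N_t \to N_t\langle r+1\rangle \to N_t\langle r\rangle \to N_t[1]$ in $\underline{\rm MCM}(gr R)$, as claimed. Uniqueness follows from the induced isomorphism $\underline{\rm Hom}_{gr R}\bigl(N_t\langle r\rangle, N_t[1]\bigr) \cong {\rm Ext}^1_{kQ}\bigl(R_t, R_t\langle r\rangle\bigr) \cong k$ (Prop \ref{R_t}), and indecomposability of $N_t\langle r\rangle$ is immediate from indecomposability of $R_t\langle r\rangle$ together with the Krull--Schmidt property (Lem \ref{krull-schmidt}).

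For part (ii), Auslander's theorem identifies the Serre functor on $\underline{\rm MCM}(gr R)$ as $\mathbb{S} = (1)$ (since $\omega_R = R(1)$), so $\tau = (1)[-1]$. The condition $syz^1_R(M) \cong M(-1)$, namely $M[-1] \cong M(-1)$, is therefore equivalent to $\tau M \cong M$. Proposition \ref{R_t}(ii) gives $\tau\bigl(R_t\langle r \rangle\bigr) \cong R_t\langle r \rangle$, and since triangulated equivalences commute with Serre functors, this transfers to $\tau\bigl(N_t\langle r \rangle\bigr) \cong N_t\langle r \rangle$. Thus $syz^1_R\bigl(N_t\langle r \rangle\bigr) \cong N_t\langle r \rangle(-1)$, yielding a $1$-periodic minimal resolution. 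Linearity follows inductively: $N_t$ is generated in degree $0$ with first syzygy $N_t(-1)$ (its matrix factorization $(\Phi_t^{+}, \Phi_t^{-})$ has linear entries), and the triangle of part (i), lifted to a short exact sequence of MCM modules generated in degree $0$, propagates this linear presentation to all $N_t\langle r \rangle$; combined with $1$-periodicity this gives a linear resolution throughout.

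The main obstacle is really Theorem \ref{maincalc}, namely the identification ${\rm E}(N_t) \cong R_t$; once that is established, both parts of Proposition \ref{N_t} are formal consequences of Proposition \ref{R_t} and general properties of triangulated equivalences together with the Auslander--Reiten--Serre formalism of Section \ref{serreduality}.
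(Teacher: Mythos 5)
Your proposal is correct and follows essentially the same route as the paper: the paper explicitly derives Proposition \ref{N_t} from Proposition \ref{R_t} via the equivalence ${\rm E}$ once Theorem \ref{maincalc} identifies ${\rm E}(N_t) \cong R_t$, using ${\rm E}\circ\tau^{-1}=\tau\circ{\rm E}$ and $\tau=(1)[-1]$ exactly as you do. Your handling of the contravariance (reversed triangles, and the harmless replacement of $\tau$ by $\tau^{-1}$, which is irrelevant for $\tau$-period one) matches the intended argument, so there is nothing to add.
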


Next, let us denote $N_{t}^{+} := N_{t} = {\rm coker}\big(\Phi_{t}^{+} \big)$ and $N_{t}^{-} := {\rm coker} \big(\Phi^{-}_{t} \big)$\footnote{Implicitly generated in degree zero.}. For $t \in \mathbb{P}^1 \setminus \{0, 1, \infty \}$ the previous proposition says that $N_{t}^{+} \cong N_{t}^{-}$. We now consider the other cases.

\begin{prop} Let $t = 0, 1, \infty$. Then:
\begin{enumerate}[i.]
\item $N_{t}^+ \ncong N_{t}^-$.
\item There are (unique) non-trivial extensions
\[
D_{t}^{-} \to N_{t}^{+} \to D_{t}^{+} \to D_{t}^{-}[1]
\]
\[
D_{t}^{+} \to N_{t}^{-} \to D_{t}^{-} \to D_{t}^{+}[1]
\]
\item Letting $D_{t}\langle 2 \rangle^{\pm}$ stand for $N_{t}^{\pm}$, there are unique non-trivial extensions
\[
D_{t}^{+} \to D_{t}\langle r+1 \rangle^{-} \to D_{t} \langle r \rangle^{-} \to D_{t}^{+}[1]
\]
\[
D_{t}^{-} \to D_{t}\langle r+1 \rangle^{+} \to D_{t} \langle r \rangle^{+} \to D_{t}^{-}[1]
\]
for all $r \geq 2$ with $D_{t}\langle r \rangle^{\pm}$ indecomposable.
\item We have $syz^1_R(D_t\langle r \rangle)^{\pm}) \cong D_t\langle r \rangle^{\mp}(-1)$, hence $D_t\langle r \rangle^{\pm}$ have $2$-periodic linear minimal resolutions.
\end{enumerate}
\end{prop}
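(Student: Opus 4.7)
The plan is to transport all four assertions across the equivalence ${\rm E}$ of Theorem \ref{E}, using the identifications $E(N_t^{\pm}) \cong R_t^{\pm}$ and $E(D_t^{\pm}) \cong S_t^{\pm}$ promised by Theorem \ref{maincalc} (to be proved in section $4$). The author has flagged that ``the next four results will then follow immediately from Prop \ref{R_t} through Theorem \ref{ARquiverQ}'', and this is precisely the route I would take.

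Since $E$ is a contravariant equivalence of triangulated categories, a routine unwinding of the triangulated structure on the opposite category shows that a distinguished triangle $A \to B \to C \to A[1]$ in ${\rm D}^b(kQ)$ corresponds to the distinguished triangle $E^{-1}(C) \to E^{-1}(B) \to E^{-1}(A) \to E^{-1}(C)[1]$ in $\underline{\rm MCM}(gr R)$. With this rule in hand, part (i) is immediate from $R_t^+ \not\cong R_t^-$ in $kQ$-mod (distinct indecomposables in the tube $\mathcal{T}_t^Q$). For part (ii), applying $E^{-1}$ to the short exact sequences $0 \to S_t^{\pm} \to R_t^{\pm} \to S_t^{\mp} \to 0$ of Prop \ref{S_t} produces exactly the two distinguished triangles displayed. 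For part (iii), applying $E^{-1}$ to the non-split extensions $0 \to S_t\langle r \rangle^{\pm} \to S_t\langle r+1 \rangle^{\pm} \to S_t^{\mp} \to 0$ on the quiver side produces the displayed triangles. Uniqueness of the non-split extensions, and indecomposability of the middle term, transport directly from the quiver side, since $E$ is an equivalence.

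For part (iv), I would use that $\tau_{\underline{\rm MCM}} = (1)[-1]$ (noted at the start of section $3$), so $syz^1_R(M) \cong \tau(M)(-1)$. Since $E$ is a contravariant triangulated equivalence between Hom-finite $k$-linear categories carrying Serre functors, one checks that $E \circ \tau_{\underline{\rm MCM}} \cong \tau^{-1}_{{\rm D}^b(kQ)} \circ E$. On the rank-two tube $\mathcal{T}_t^Q$ the translate $\tau_{{\rm D}^b(kQ)}$ has order $2$ and sends $S_t\langle r\rangle^{\pm}$ to $S_t\langle r\rangle^{\mp}$, so the same is true of $\tau^{-1}_{{\rm D}^b(kQ)}$. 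Transporting back yields $\tau_{\underline{\rm MCM}}(D_t\langle r\rangle^{\pm}) \cong D_t\langle r\rangle^{\mp}$, whence $syz^1_R(D_t\langle r\rangle^{\pm}) \cong D_t\langle r\rangle^{\mp}(-1)$. Iterating gives $syz^2_R(D_t\langle r\rangle^{\pm}) \cong D_t\langle r\rangle^{\pm}(-2)$, so the minimal resolution is $2$-periodic (and linear, the generators lying in degree zero and the syzygies in degree one).

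The only real obstacle is external to the present proposition: it is Theorem \ref{maincalc} itself, where one must explicitly match $N_t$ and $D_t^{\pm}$ with $R_t$ and $S_t^{\pm}$ under $E$ by a direct calculation with matrix factorizations and the complete resolutions of section \ref{complexitytwo}. Once that identification is in hand, each of the four parts above is essentially formal.
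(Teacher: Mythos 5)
Your proposal is correct and follows essentially the same route as the paper: the author explicitly derives this proposition (along with its neighbours) by transporting Prop \ref{S_t} through Theorem \ref{ARquiverQ} across the contravariant equivalence ${\rm E}$, using the identifications ${\rm E}(N_t)\cong R_t$, ${\rm E}(D_t^{\pm})\cong S_t^{\pm}$ of Theorem \ref{maincalc} and the compatibility ${\rm E}\circ\tau^{-1}=\tau\circ{\rm E}$ with $\tau=(1)[-1]=syz^1_R(-)(1)$, exactly as you do (the explicit matrix factorizations $\Psi_t^{\pm}$ in the paper only illustrate the extensions in part (ii), they are not the proof).
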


We can see the extensions of $ii.$ as follows. Define matrix factorizations $(\Psi_t^+, \Psi_t^-)$ for $t = 0,1, \infty$ by
\begin{align*}
\Psi_{0}^{+} = \begin{bmatrix} x+y & z \\
										 0 & x-y \end{bmatrix}
										 &&
\Psi_{0}^{-} = \begin{bmatrix} x-y &  -z\\
                                        0  & x+y  \end{bmatrix}
                                        \\
                                        \\
\Psi_{1}^{+} = \begin{bmatrix} x-z & y \\
										 0 & x+z  \end{bmatrix}
										 &&
\Psi_{1}^{-} = \begin{bmatrix} x+z & -y \\
                                        0 & x-z \end{bmatrix}
                                        \\
                                        \\
\Psi_{\infty}^{+} = \begin{bmatrix} y-z & x \\
                                        0  & y+z \end{bmatrix}
										 &&
\Psi_{\infty}^{-} = \begin{bmatrix} y+z & -x \\
                                        0 & y-z \end{bmatrix}
\end{align*}
One easily sees that ${\rm coker}(\Psi_t^{\pm}) \cong {\rm coker}(\Phi_t^{\pm}) = N_t^{\pm}$, and these give rise to said non-trivial extensions. It would be interesting to have an explicit presentations of the higher extensions.

Finally, let $\mathcal{Q}_{t}$ denote the Auslander-Reiten component containing $N_{t}$ in $\underline{{\rm MCM}}(gr R)$.

\begin{prop} The components $\mathcal{Q}_{t}$ are given by the quivers below with the edges identified:

\begin{align*}
\xymatrix@R1pc@C.8pc{\vdots \ar[dr] \ar@{-}[dd] && \vdots \ar@{-}[dd] &&  \vdots \ar[dr] \ar@{-}[dd] && \vdots \ar[dr] && \vdots \ar@{-}[dd] \\
 & N_{t} \langle 6 \rangle \ar[ur] \ar[dr] & && & D_{t}\langle 6 \rangle^{-} \ar[dr] \ar[ur] \ar@{.}[rr] && D_{t}\langle 6 \rangle^{+} \ar[dr] \ar[ur] & \\
 N_{t}\langle 5 \rangle \ar[dr] \ar[ur] \ar@{.}[rr] \ar@{-}[dd] && N_{t} \langle 5 \rangle \ar@{-}[dd] && D_{t}\langle 5 \rangle^{+} \ar[dr] \ar[ur] \ar@{-}[dd] \ar@{.}[rr] && D_{t}\langle 5 \rangle^{-} \ar[dr] \ar[ur] \ar@{.}[rr] && D_{t}\langle 5 \rangle^{+} \ar@{-}[dd] \\
 & N_{t}\langle 4 \rangle \ar[dr] \ar[ur] & && & D_{t}\langle 4 \rangle^{+} \ar[dr] \ar@{.}[rr] \ar[ur] && D_{t}\langle 4 \rangle^{-} \ar[dr] \ar[ur] \\
 N_{t}\langle 3 \rangle \ar[dr] \ar[ur] \ar@{.}[rr] \ar@{-}[dd] && N_{t}\langle 3 \rangle \ar@{-}[dd] && D_{t}\langle 3 \rangle^{-} \ar@{-}[dd] \ar@{.}[rr] \ar[dr] \ar[ur] && D_{t}\langle 3 \rangle^{+} \ar[dr] \ar[ur] \ar@{.}[rr] && D_{t}\langle 3 \rangle^{-} \ar@{-}[dd] \\
 & N_{t} \langle 2 \rangle \ar[dr] \ar[ur] & && & N_{t}^{-} \ar[dr] \ar[ur] \ar@{.}[rr] && N_{t}^{+} \ar[dr] \ar[ur] \\
 N_{t} \ar[ur] \ar@{.}[rr] && N_{t} && D_{t}^{+} \ar[ur] \ar@{.}[rr] && D_{t}^{-} \ar[ur] \ar@{.}[rr] && D_{t}^{+}\\
 & t \in \mathbb{P}^1 \setminus \{0, 1, \infty \} & && && t = 0, 1, \infty. &&
 }
\end{align*}

\end{prop}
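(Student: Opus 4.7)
The plan is to deduce this proposition as a direct consequence of the equivalence ${\rm E}: \underline{{\rm MCM}}(gr R)^{op} \xrightarrow{\cong} {\rm D}^b(kQ)$ of Theorem \ref{E}, together with the identification (to be established in section 4 as Theorem \ref{maincalc}) of $N_t \leftrightarrow R_t$ (for generic $t$), $N_t^{\pm} \leftrightarrow R_t^{\pm}$, and $D_t^{\pm} \leftrightarrow S_t^{\pm}$ (for $t = 0,1,\infty$). Since ${\rm E}$ is an exact equivalence of triangulated categories, it preserves Serre functors, hence sends the translate $\tau$ on one side to $\tau^{-1}$ on the other, and thereby sends almost-split triangles to almost-split triangles and irreducible morphisms to irreducible morphisms (up to arrow reversal from the opposite category). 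Consequently ${\rm E}$ induces an isomorphism of Auslander-Reiten quivers (with arrows reversed), so that the component of $\Gamma(\underline{{\rm MCM}}(gr R))$ containing $N_t$ (resp.\ $D_t^{\pm}$) is isomorphic as a translation quiver to the component $\mathcal{T}_t^Q$ described explicitly after Prop.\ \ref{serial} and in Prop.\ \ref{S_t}.

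Once this transport of structure is in place, the remaining content is to identify the objects sitting at each vertex of $\mathcal{Q}_t$. The key observation is that the uniserial structure from Props.\ \ref{R_t} and \ref{S_t} transports under ${\rm E}$ to the extension data asserted in Prop.\ \ref{N_t} and the proposition preceding this one for the modules $D_t\langle r \rangle^{\pm}$. Concretely, the unique non-split extensions defining $R_t\langle r \rangle$ and $S_t\langle r \rangle^{\pm}$ correspond under ${\rm E}$ to unique non-split distinguished triangles in $\underline{{\rm MCM}}(gr R)$, whose middle terms we have \emph{named} $N_t\langle r \rangle$ and $D_t\langle r \rangle^{\pm}$. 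This labelling is forced (up to isomorphism) by the uniqueness of the extensions and by the choice $N_t\langle 1 \rangle = N_t$, $D_t\langle 1 \rangle^{\pm} = D_t^{\pm}$, $D_t\langle 2 \rangle^{\pm} = N_t^{\pm}$.

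Finally, the shapes of the tubes carry over verbatim: for $t \in \mathbb{P}^1 \setminus \{0,1,\infty\}$ the component $\mathcal{T}_t^Q$ is a rank-one tube $\mathbb{Z}A_{\infty}/\langle \tau \rangle$, and the indexing by $r \geq 1$ gives the rings of the tube with $N_t\langle r \rangle$ on the $r$-th ring; for $t = 0, 1, \infty$ it is a rank-two tube $\mathbb{Z}A_{\infty}/\langle \tau^2 \rangle$ whose two $\tau$-orbits of simple regulars are $\{S_t^+, S_t^-\}$, transported to $\{D_t^+, D_t^-\}$. The $\tau$-action given by $\tau = (1)[-1]$ on $\underline{{\rm MCM}}(gr R)$ matches the periodicity and syzygy statements in Prop.\ \ref{D_t} and the syzygy-periodicity of $N_t\langle r \rangle$. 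Taking the opposite category reverses arrows but preserves the underlying tube, so the pictured quivers are correct after this reversal.

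The substantive obstacle is therefore not in this proposition itself but in establishing the preceding step, namely Theorem \ref{maincalc}: matching $N_t \leftrightarrow R_t$, $N_t^{\pm} \leftrightarrow R_t^{\pm}$, and $D_t^{\pm} \leftrightarrow S_t^{\pm}$ under ${\rm E}$. Once that matching is proven (by directly computing the images of the explicit matrix factorizations under the tilting functor of Theorem \ref{E}), the structure of $\mathcal{Q}_t$ as pictured follows purely formally from Theorem \ref{ARquiverQ}, the compatibility of ${\rm E}$ with Serre functors, and the fact that all the named MCM modules are indecomposable (Prop.\ \ref{Nindec}, Prop.\ \ref{D_t}, and the extension propositions), since then $\mathcal{Q}_t$ is a single connected component isomorphic to the corresponding $\mathcal{T}_t^Q$.
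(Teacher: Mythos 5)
Your proposal is correct and follows essentially the same route as the paper: the paper also defers the substance to Theorem \ref{maincalc} (the matching $N_t \leftrightarrow R_t$, $D_t^{\pm} \leftrightarrow S_t^{\pm}$) and then obtains the components $\mathcal{Q}_t$ by transporting the known tube components $\mathcal{T}_t^Q$ of $\Gamma({\rm D}^b(kQ))$ through the equivalence ${\rm E}$, using that ${\rm E}$ is compatible with Serre functors and that the regular indecomposables are built from the simple regulars by unique canonical extensions. Your remarks on the contravariance (arrow reversal, $\tau \mapsto \tau^{-1}$) and on the naming of the middle terms $N_t\langle r\rangle$, $D_t\langle r\rangle^{\pm}$ are exactly the implicit bookkeeping the paper performs.
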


\begin{thm}\label{ARquiverMCM}
The Auslander-Reiten quiver of $\underline{{\rm MCM}}(gr R)$ is given by the union of disjoint components
\[
\Gamma\big(\underline{{\rm MCM}}(gr R)\big) = \bigg(\bigcup_{n \in \Z} \mathcal{C}[n] \bigg) \cup \bigg(\bigcup_{t \in \mathbb{P}^1, n \in \Z} \mathcal{Q}_{t}[n]\bigg).
\]

Hence the classification of indecomposable graded MCM modules is as listed in Theorem \ref{maintheorem}.
\end{thm}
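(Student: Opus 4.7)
The plan is to transport the Auslander-Reiten structure of ${\rm D}^b(kQ)$ across the equivalence ${\rm E}: \underline{\rm MCM}(gr R)^{op} \xrightarrow{\cong} {\rm D}^b(kQ)$ of Theorem \ref{E}, using the module-by-module identifications proved separately in section 4. Since ${\rm E}$ is an equivalence of triangulated categories, it carries the Serre functor on one side to the Serre functor on the other, hence preserves almost-split triangles and the spaces ${\rm Irr}(X,Y)$ of irreducible maps between indecomposables. Consequently ${\rm E}$ induces an isomorphism of Auslander-Reiten quivers $\Gamma(\underline{\rm MCM}(gr R)) \cong \Gamma({\rm D}^b(kQ))^{op}$, reducing the theorem to reading off Theorem \ref{ARquiverQ} on the $kQ$ side.

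Next, I would match the individual components under ${\rm E}$. The regular tubes $\mathcal{T}_t^Q$ are characterized intrinsically as the $\tau$-periodic components, which under $\tau = (1)[-1]$ on $\underline{\rm MCM}(gr R)$ corresponds to modules with periodic minimal resolutions, i.e.\ the complexity one modules. Invoking Thm \ref{maincalc} (which supplies $N_t \leftrightarrow R_t$ and $D_t^{\pm} \leftrightarrow S_t^{\pm}$), one identifies the quasi-simple regular of $\mathcal{T}_t^Q$ with the base of the tube $\mathcal{Q}_t$; the full tube then matches because both are built from their base by iterated non-split extensions and Props \ref{R_t}, \ref{S_t}, \ref{N_t} and its analog for $D_t\langle r\rangle^{\pm}$ give the same extension combinatorics on each side. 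For the preprojective-preinjective component, the identifications $P(0) \leftrightarrow k^{st}$ and $P(i) \leftrightarrow L_i$ ($i=1,2,3,4$) supplied in section 4 pin down one $\tau$-orbit in each of the five $\tau$-rays, and the shape of $\mathcal{C}$ is forced by applying $\tau^n = (n)[-n]$ to obtain $\mathcal{C} \cong \mathcal{PI}^{op}$.

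Having identified each component, the disjoint union decomposition in the statement is then just the translation of Theorem \ref{ARquiverQ} through ${\rm E}^{-1}$ together with the shift $[n]$ on both sides. To deduce the list of Theorem \ref{maintheorem}, enumerate the indecomposables in each component: the $\tau$-orbits in $\mathcal{C}$ yield exactly the modules $syz^n_R(k^{st})(m)$ and $syz^n_R(L_i)(m)$, matching the complexity $2$ column of the table, and each tube $\mathcal{Q}_t$ contributes either the uniserial chain $\{N_t\langle r\rangle\}_{r \geq 1}$ (for $t \notin \{0,1,\infty\}$) or the two chains $\{D_t\langle r\rangle^{\pm}\}_{r \geq 1}$ (for $t \in \{0,1,\infty\}$), matching the complexity $1$ column. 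The complexity dichotomy is automatic since preprojective-preinjective objects are not $\tau$-periodic while regular ones are.

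The real work has been quietly delegated to section 4: the main obstacle is Thm \ref{maincalc}, which requires explicitly computing the image under ${\rm E}$ of the matrix factorizations $N_t$ and $D_t^{\pm}$ and recognizing them as $R_t, S_t^{\pm}$. Once that calibration is done, together with the identification of $k^{st}$ and the $L_i$ with the vertices of $\mathcal{PI}$, everything in this theorem is formal from the equivalence, the preservation of Serre/$\tau$, and the already-proven AR quiver of ${\rm D}^b(kQ)$.
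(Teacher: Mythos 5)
Your proposal is correct and follows essentially the same route as the paper: transport the AR quiver of ${\rm D}^b(kQ)$ (Theorem \ref{ARquiverQ}) across the contravariant tilting equivalence ${\rm E}$, calibrate the preprojective-preinjective component via ${\rm E}(k^{st})=P(0)$, ${\rm E}(L_i)=P(i)$, and calibrate the tubes via Theorem \ref{maincalc} together with the fact that $R_t$, $S_t^{\pm}$ are the only simple regulars and all regular indecomposables arise from them by unique extensions. Your observation that the substantive content is concentrated in Theorem \ref{maincalc}, with the rest being formal preservation of $\tau$ and almost-split triangles under the (op-twisted) equivalence, matches the paper's own proof exactly.
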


Now let $\widehat{R}$ be the completion at the irrelevant ideal. According to \cite[Prop 1.5]{MR2776613}, the completion functor $\underline{{\rm MCM}}(gr R) \to \underline{{\rm MCM}}(\widehat{R})$ can be identified with canonical morphism to the triangulated hull of the orbit category of $\underline{{\rm MCM}}(gr R)$ under $F = (1)$. Since $\underline{{\rm MCM}}(gr R) \cong {\rm D}^b(kQ)$ and $(1) = \tau[1]$, it follows from results of Keller \cite{MR2184464} that the completion functor is essentially surjective, and so every MCM module over $\widehat{R}$ is gradable. Our result then extends to a description of indecomposables in $\underline{{\rm MCM}}(\widehat{R})$.

\subsection{Betti tables and Ulrich modules}
Let $M$ be a graded $R$-module with minimal complete free resolution
\[
\cdots \to \bigoplus_{j \in \Z} R(-j)^{\oplus \beta_{i, j}} \to \cdots \to \bigoplus_{j \in \Z} R(-j)^{\oplus \beta_{1, j}} \to \bigoplus_{j \in \Z} R(-j)^{\oplus \beta_{0, j}} \to \bigoplus_{j \in \Z} R(-j)^{\oplus \beta_{-1, j}} \to \cdots.
\]
The Betti table of $M$ is the table whose $i^{th}$ column and $j^{th}$ row is given by $\beta_{i, i+j}$. We list here all Betti tables of indecomposables, which can be calculated directly from the previous two sections. Note that $syz^{\pm 1}_R(-)(\pm1)$ and $(\pm1)$ correspond to horizontal and vertical shifts respectively.

\begin{prop}\label{betti} The Betti tables of indecomposable graded MCM modules are given up to syzygy and degree shifts by:
\begin{align*}
N_{t}\langle r \rangle: && D_{t}\langle r \rangle^{\pm}: \\
& \begin{tabular}{ c | c  c  c  c  c  c }
 & \ldots & 2 & 1 & 0 & -1 & \ldots \\
\hline
 -1 & - & - & - & - & - & - \\
  0 & \ldots & 2r & 2r & 2r & 2r & \ldots \\
  1 & - & - & - & -  & - & -\\
\end{tabular}
&& \begin{tabular}{ c | c  c  c  c  c  c }
 & \ldots & 2 & 1 & 0 & -1 & \ldots \\
\hline
 -1 & - & - & - & - & - & - \\
  0 & \ldots & r & r & r & r & \ldots \\
  1 & - & - & - & -  & - & -\\
\end{tabular}
\end{align*}

\begin{align*}
 k^{st}: &&  L_i: & \\
 & \begin{tabular}{ c | c  c  c  c  c  c  c  c }
 & \ldots & 3 & 2 & 1 & 0 & -1 & -2 & \ldots \\
\hline
 --2 & - & - & - & - & - & - & - & -\\
 -1 & - & - & - & - & 1 & 3 & 5 & \ldots \\
  0 & \ldots & 7 & 5 & 3 & 1 & - & - & - \\
  1 & - & - & - & - & -  & - & - & - \\
\end{tabular}
&& \begin{tabular}{ c | c  c  c  c  c  c  c  c }
 & \ldots & 3 & 2 & 1 & 0 & -1 & -2 & \ldots \\
\hline
 -2 & - & - & - & - & - & - & - & -\\
 -1 & - & - & - & - & - & 1 & 2 & \ldots \\
  0 & \ldots & 4 & 3 & 2 & 1 & - & - & - \\
  1 & - & - & - & - & -  & - & - & - \\
\end{tabular}
\end{align*}
\end{prop}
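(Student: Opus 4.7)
The plan is to handle the complexity-one and complexity-two cases separately, each with its own machinery. For complexity one, start with the ``base'' modules $N_t$ and $D_t^{\pm}$: their minimal complete resolutions are exactly the $2$-periodic and $1$-periodic complexes determined by the matrix factorizations $(\Phi_t^+, \Phi_t^-)$ and the linear factorizations of $Q_0, Q_1, Q_\infty$ respectively. In each case all matrix entries are linear in $x,y,z$, so the resolutions are linear, all generators lie on the diagonal (row zero), and one reads off $\beta_i(N_t) = 2$ and $\beta_i(D_t^{\pm}) = 1$ in every column directly from the size of the matrix factorization.

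For the filtered modules $N_t\langle r \rangle$ and $D_t\langle r \rangle^{\pm}$, induct on $r$ via the non-split short exact sequences of Proposition \ref{N_t} and its analogue for $D_t\langle r\rangle^{\pm}$. Since these modules are $\tau$-periodic with period $1$ or $2$ and have linear minimal resolutions, it suffices to compute $\beta_0$. The Horseshoe lemma applied to the short exact sequences gives the upper bounds $\beta_0(N_t\langle r+1\rangle) \leq 2(r+1)$ and $\beta_0(D_t\langle r+1\rangle^{\pm}) \leq r+1$; equality holds because any strict inequality would force a free summand in the minimal resolution, contradicting indecomposability (Prop.\ \ref{Nindec} and the analogous statement for $D_t\langle r\rangle^{\pm}$).

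For the complexity-two modules $k^{st}$ and $L_i$, the approach is direct enumeration using the explicit minimal complete resolutions of Proposition \ref{completeresol}. Count the generators in each bidegree by summing ranks along antidiagonals of total degree $n = i + 2j$ in the bicomplexes displayed in Section \ref{complexitytwo}: using ranks $(1,3,3,1)$ for $E^0$ and $j+1$ for $\Gamma^j$ yields the sequence $1, 3, 5, 7, \ldots, 2n+1, \ldots$ for $k^{st}$, while ranks $(1, 2, 1)$ for $E^i$ give $1, 2, 3, \ldots, n+1, \ldots$ for $L_i$. Since $|dx|=|dy|=|dz|=(1,1)$ and $|dQ_j|=(1,2)$, every such generator sits on the linear strand (internal degree equal to homological degree), so all nonzero entries on the syzygy side fall in row zero of the Betti table.

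The subtle point, and the main obstacle, is tracking the twist $\omega_R = R(1)$ across the socle transition of the complete resolution: the right tail involves $\overline{\omega} \otimes_R T^j \otimes_R \overline{E}$, with $\overline{\omega} \cong R(1)$ coming from the top form $\Omega = dQ_0\, dQ_\infty$ of bidegree $(2,4)$ paired against $\det(V)^{\vee}$ of bidegree $(-2,-4)$. This shift causes the cosyzygy Betti numbers to migrate from row zero into row $-1$ of the table; once this twist is pinned down, the same antidiagonal count on the $\overline{\omega} \otimes_R T^j \otimes_R \overline{E}$ side reproduces the values displayed for the negative columns, as required.
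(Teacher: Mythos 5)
Your overall route --- reading the complexity-one tables off the matrix factorizations together with the extension structure, and the complexity-two tables off the explicit complete resolutions of Prop.~\ref{completeresol} --- is exactly the paper's, and your antidiagonal counts on the resolution side (ranks $(1,3,3,1)$, resp.\ $(1,2,1)$, against $j+1$ for $\Gamma^j$) do give $2n+1$ and $n+1$ in the linear strand. However, two steps as written do not hold up. In the induction on $r$, the equality $\beta_0(N_t\langle r+1\rangle)=2(r+1)$ is not justified by the remark that a strict inequality ``would force a free summand in the minimal resolution, contradicting indecomposability'': strict inequality in the Horseshoe bound only says the Horseshoe resolution is non-minimal, which contradicts nothing about the module; moreover the triangles of Prop.~\ref{N_t} are a priori realized by short exact sequences whose middle term agrees with $N_t\langle r+1\rangle$ only up to a free summand. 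The correct repair is a degree argument: all modules involved are generated in internal degree $0$ with linear resolutions, so the connecting map ${\rm Tor}^R_1(N_t\langle r\rangle,k)\to N_t\otimes_R k$ (concentrated in internal degrees $1$ and $0$ respectively) vanishes and $\beta_0$ is additive on the realizing extension; combined with additivity of multiplicity and the identity $e(M)=2\beta_0(M)$ for $M$ generated in degree $0$ with $syz^1_R M\cong M(-1)$ (from $H_M(t)(1+t)=\beta_0(M)(1+t)^2/(1-t)$), this both rules out a free summand in the middle term and yields $\beta_0(N_t\langle r\rangle)=2r$, $\beta_0(D_t\langle r\rangle^{\pm})=r$. (Also note $N_t\langle r\rangle$ is the $1$-periodic case and $D_t\langle r\rangle^{\pm}$ the $2$-periodic one, not the reverse, though this does not affect the counts.)

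On the complexity-two side, the twist you single out as ``the subtle point'' is not $\overline{\omega}\cong R(1)$: as the paper records, $\omega\cong\Lambda(4)[-2]$, so $\overline{\omega}\cong R(4)[-2]$, and after the staircase gluing the tail term $\overline{\omega}\otimes T^j\otimes\overline{E}_i$ sits in homological degree $i-3-2j$ with generators in internal degree $i-2j-4$; the difference of these is $-1$, which is the actual reason every tail contribution lands in row $-1$. More importantly, your statement that the tail count ``reproduces the values displayed for the negative columns'' misses that for $k^{st}$ the tail already contributes in column $0$: since $E^0$ has length $3$, the term $\overline{\omega}\otimes\overline{E}^0_3\cong R(1)$ lands in homological degree $0$, and this is precisely the entry $1$ in row $-1$, column $0$ of the displayed table (equivalently $\nu(k^{st})=2$, consistent with $Q_{k^{st}}(t)=3+t^{-1}$). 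For $L_i$, where $E^i$ has length $2$, the tail indeed begins in column $-1$. With these two repairs your computation reproduces the stated tables.
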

Note that from this list, one sees that all finitely generated $R$-modules have minimal resolutions which are eventually linear. This result holds in general for Koszul rings, as shown by Avramov and Eisenbud in \cite{MR1195407}.

Next we record the main numerical invariants of non-free graded MCM modules. For such a module $M$, the Hilbert series takes the form
\[
H_M(t) = \frac{Q_M(t)}{1-t}
\]
where $Q_M(t)$ is a Laurent polynomial with $Q_M(1) \neq 0$. We let $\nu(M)$ be the minimal number of generators of $M$ and $e(M) = Q_M(1)$ the multiplicity. These are invariant under degree shift, and we can work these out from the betti tables.
\begin{prop} The numerical invariants of indecomposable graded MCM modules are given by
\[
\centerline{
\begin{tabular}{|c|c|c|c|}
  \hline
  $M$ & $Q_M(t)$ & $\nu(M)$ & $e(M)$ \\
  \hline
  &&& \\
  $N_t\langle r \rangle$ & 2r(1+t) & 2r & 4r\\
  $D_t\langle r \rangle^{\pm}$ & r(1+t) & r & 2r\\
  &&& \\
  $syz^n(k^{st}), n \geq 1$ & $(2n-1)t^{n+1} + (2n+1)t^{n}$ &2n+1 & 4n \\
  $k^{st}$ & $3 + t^{-1}$ & 2 & 4 \\
  $syz^{-n}(k^{st}), n \geq 1$ & $(2n+3)t^{-n} + (2n+1)t^{-(n+1)}$ & 2n+1 & 4n+4 \\
  &&& \\
  $syz^n(L_i), n \geq 0$ & $nt^{n+1} + (n+1)t^n$ & n+1 & 2n+1 \\
  $syz^{-n}(L_i), n \geq 1$ & $(-1)^{n+1}(t-1) + (n+1)t^{-(n-1)} + nt^{-n}$ & n & 2n+1\\
  &&& \\
  \hline
\end{tabular}}
\]
\end{prop}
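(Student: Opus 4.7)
My plan is to derive each entry of the table by computing the Hilbert series of the indicated module from the minimal (complete) resolutions furnished by Proposition \ref{betti}. The common input is $H_R(t) = (1+t)^2/(1-t)$, which holds because $R = S/(Q_0, Q_\infty)$ is a complete intersection of type $(2,2)$ in $S = k[x,y,z]$. For any graded MCM $R$-module $M$ of positive rank, the Hilbert series has the form $H_M(t) = Q_M(t)/(1-t)$ with $Q_M(1) = e(M)$, while $\nu(M) = \sum_j \beta_{0,j}(M)$ is visible in the zeroth column of the Betti table.

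For the complexity-one modules $M \in \{N_t\langle r \rangle, D_t\langle r \rangle^{\pm}\}$, the minimal resolution is linear and $\operatorname{syz}^1_R(M) \cong M'(-1)$ for a module $M'$ with $H_{M'}(t) = H_M(t)$ (take $M' = M$ in the $N$ case, $M' = D_t\langle r \rangle^{\mp}$ in the $D$ case). The short exact sequence $0 \to M(-1) \to R^{\nu(M)} \to M \to 0$ forces $H_M(t)(1+t) = \nu(M)\,H_R(t)$, hence $Q_M(t) = \nu(M)(1+t)$ and $e(M) = 2\nu(M)$. Substituting $\nu(N_t\langle r \rangle) = 2r$ and $\nu(D_t\langle r \rangle^{\pm}) = r$ from Proposition \ref{betti} fills in the two complexity-one rows.

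For the complexity-two families the strategy is to establish base cases and then induct. Since $L_i = R/I(p_i) \cong k[z]$ for any linear form $z$ not vanishing at $p_i$, directly $H_{L_i}(t) = 1/(1-t)$ and $Q_{L_i}(t) = 1$. For $k^{st}$, I would identify the MCM approximation of $k$ as the unique nontrivial extension
\[
0 \to \omega_R \to k^{st} \to k \to 0,
\]
classified by the generator of $\operatorname{Ext}^1_R(k, \omega_R) \cong k$ provided by local duality; this is an MCM approximation in the Auslander--Buchweitz sense because $\omega_R = R(1)$ is free, hence of finite projective dimension. Summing Hilbert series yields
\[
H_{k^{st}}(t) = t^{-1}\,\frac{(1+t)^2}{1-t} + 1 = \frac{3 + t^{-1}}{1-t},
\]
so $Q_{k^{st}}(t) = 3 + t^{-1}$, $\nu(k^{st}) = 2$, and $e(k^{st}) = 4$.

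With both base cases secured, the remaining invariants are obtained by induction along the minimal complete resolution. For $M \in \{L_i, k^{st}\}$ the short exact sequence $0 \to \operatorname{syz}^{n+1}(M) \to F_n \to \operatorname{syz}^n(M) \to 0$, with $F_n$ the minimal free cover of $\operatorname{syz}^n(M)$ read from Proposition \ref{betti}, yields the one-step recurrence $Q_{\operatorname{syz}^{n+1}(M)}(t) = Q_{F_n}(t) - Q_{\operatorname{syz}^n(M)}(t)$, and the mirror recurrence (with shifts and Betti numbers read from the negative columns of the Betti table) treats the cosyzygies. The only step beyond Euler characteristic bookkeeping is the explicit identification of $k^{st}$ as the $\omega_R$-extension of $k$; once this is in place the closed forms fall out by a one-step linear recurrence.
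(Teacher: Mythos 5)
Your overall strategy is the same as the paper's: the paper obtains this table by reading the invariants off the Betti tables of the minimal complete resolutions (Prop.~\ref{betti}), and your Euler-characteristic bookkeeping along $0 \to syz^{n+1}(M) \to F_n \to syz^n(M) \to 0$, together with the periodicity argument in complexity one, is exactly that computation. Your identification of $k^{st}$ as the unique nontrivial extension $0 \to \omega_R \to k^{st} \to k \to 0$ is a nice self-contained way to get the base case (and it does reproduce $Q_{k^{st}} = 3 + t^{-1}$), but it needs two small verifications you omit: that the middle term of this extension is actually MCM (dualize the sequence; since $\omega_R$ is free and the connecting map ${\rm Hom}_R(\omega_R, R) \to {\rm Ext}^1_R(k,R) \cong k(-1)$ hits a generator, ${\rm Ext}^1_R(X,R)=0$), and that it has no free summands, so that it is the minimal representative whose generators, Hilbert series and Betti table are being tabulated. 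Similarly, the equality $H_{D_t\langle r\rangle^+} = H_{D_t\langle r\rangle^-}$ that you use in the complexity-one rows deserves a one-line argument (subtract the two periodicity sequences, or use the automorphism of $R$ interchanging the two linear forms).

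The genuine problem is your final claim that ``the closed forms fall out by a one-step linear recurrence'': for the row $syz^{-n}(L_i)$ they do not. The paper's own Betti table for $L_i$ has the cosyzygy part on the lower diagonal, i.e.\ $C_{-n} = R(n+1)^{\oplus n}$ (equivalently $L_i^* \cong L_i(-2)$, since ${\rm ann}(l_i,l_{i+1})$ is the principal ideal generated by the complementary quadric $l_{i+2}l_{i+3}$, which sits in degree $2$). Feeding this into your recurrence $0 \to syz^{-(n-1)}(L_i) \to C_{-n} \to syz^{-n}(L_i) \to 0$ gives
\[
Q_{syz^{-n}(L_i)}(t) \;=\; (n+1)t^{-n} + n\,t^{-(n+1)},
\]
for instance $t^{-2} + 2t^{-1}$ for $n=1$, whereas the proposition asserts $(-1)^{n+1}(t-1) + (n+1)t^{-(n-1)} + n t^{-n}$, i.e.\ $t + 1 + t^{-1}$ for $n=1$; the printed formula corresponds to pretending the cosyzygy generators sit in degree $-n$ (i.e.\ $C_{-n} = R(n)^{\oplus n}$), which contradicts the displayed Betti table and the direct computation of $L_i^*$. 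So your method, carried out correctly, cannot reproduce the $Q_M$ entry of that row as stated (the columns $\nu = n$ and $e = 2n+1$ do come out correctly); asserting that the closed forms follow from the recurrence papers over this discrepancy, and you should either flag the entry as an error/typo in the statement or redo that row explicitly.
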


There is a general upper bound $\nu(M) \leq e(M)$. Ulrich modules are the modules which meet this bound.
\begin{cor} Up to degree shifts, the indecomposable graded Ulrich modules are given by the point modules $L_i$.
\end{cor}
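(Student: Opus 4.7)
The plan is to note that being Ulrich is an additive condition (both $\nu(M)$ and $e(M)$ are additive on direct sums), and is invariant under degree shifts, so by the Krull-Schmidt property of $\mathrm{MCM}(gr R)$ it suffices to inspect each family of indecomposables listed in Theorem \ref{maintheorem} exactly once, using the numerical invariants recorded in the previous proposition.

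First I would recall that the general inequality $\nu(M) \leq e(M)$ holds for any MCM module and that equality defines the Ulrich condition. Since $\nu$ and $e$ are additive on $\oplus$, an arbitrary MCM module is Ulrich if and only if each of its indecomposable summands is. Since both invariants are clearly invariant under internal shifts $M \mapsto M(m)$, I may restrict to the representatives of indecomposables up to degree shift enumerated in Theorem \ref{maintheorem}.

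Next I would go through the table line by line, comparing the columns $\nu(M)$ and $e(M)$:
\begin{itemize}
\item For $N_t\langle r \rangle$ one has $\nu = 2r$ while $e = 4r$, so $\nu < e$ for all $r \geq 1$.
\item For $D_t\langle r \rangle^{\pm}$ one has $\nu = r$ while $e = 2r$, so again $\nu < e$.
\item For the complexity two modules $\mathrm{syz}^n(k^{st})$ the values are $(2n+1)$ vs.\ $4n$ (for $n \geq 1$), $2$ vs.\ $4$ (for $n=0$), and $(2n+1)$ vs.\ $(4n+4)$ (for $n \leq -1$); in each case $\nu < e$.
\item For $\mathrm{syz}^n(L_i)$ with $n \geq 0$ the values are $(n+1)$ vs.\ $(2n+1)$, which are equal precisely when $n = 0$, and for $n \geq 1$ negative syzygies give $n$ vs.\ $(2n+1)$, which are never equal.
\end{itemize}
The only case achieving equality is $n = 0$ in the $\mathrm{syz}^n(L_i)$ family, which gives $L_i$ itself with $\nu(L_i) = e(L_i) = 1$.

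There is no real obstacle here: the statement is essentially a bookkeeping consequence of the explicit Hilbert series computations carried out in the preceding proposition, combined with the full classification of indecomposables from Theorem \ref{maintheorem}. One could equivalently phrase the conclusion by noting that $L_i$ is a rank one MCM module of multiplicity one and hence tautologically Ulrich, while all other indecomposables have $e(M) \geq 2 \nu(M)$ (or strictly greater than $\nu(M)$ in the complexity two case), so they fail the equality.
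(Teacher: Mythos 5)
Your proposal is correct and is essentially the argument the paper intends: the corollary is read off directly from the table of invariants $\nu(M)$ and $e(M)$ in the preceding proposition, together with the classification of indecomposables and additivity/shift-invariance of the Ulrich condition, and your case-by-case check matches the paper's values, with equality $\nu = e$ occurring only for $L_i$.
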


\section{Proof of the classification result}\label{proofsection}
\subsection{The $\widetilde{D}_4$ tilting module}

In this section, we prove the equivalence of categories stated in the introduction. We make use of standard results coming from tilting theory.
\begin{defn}
Let $\mathcal{T}$ be a triangulated category. We say that $T \in \mathcal{T}$ is tilting if ${\rm thick}(T) = \mathcal{T}$ and
$T$ has no self-extensions, i.e. ${\rm Hom}_{\mathcal{T}}(T, T[n]) = 0$ for $n \neq 0$.
\end{defn} 

\begin{thm}[Keller, \cite{MR2384608}, Thm 8.7]
Let $\mathcal{T}$ be an idempotent closed, algebraic triangulated category with dg enhancement $\mathcal{C}$, and with tilting object $T$. Let $\mathcal{A} = {\rm Hom}_{\mathcal{C}}(T, T)$, and $A = {\rm H}^0(\mathcal{A}) = {\rm Hom}_{\mathcal{T}}(T, T)$. Then there are equivalences of triangulated categories
\begin{enumerate}[i.]
\item ${\rm Hom}_{\mathcal{C}}(-, T): \mathcal{T}^{op} \xrightarrow{\cong} {\rm D}_{{\rm perf}}(\mathcal{A})$,
\item $\Theta: {\rm D}_{{\rm perf}}(\mathcal{A}) \xrightarrow{\cong} {\rm D}_{{\rm perf}}(A)$.
\end{enumerate}
\end{thm}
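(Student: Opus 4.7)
The plan is to construct the equivalence (i) via a derived Hom functor built from the dg enhancement, then to obtain (ii) from the formality of $\mathcal{A}$ as a dg algebra whose cohomology is concentrated in degree zero.

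For (i), I would work at the dg level and set $F = {\rm RHom}_{\mathcal{C}}(-, T)$, interpreted so as to descend to a triangulated functor $\mathcal{T}^{op} \to {\rm D}(\mathcal{A})$. By construction $F(T) \simeq \mathcal{A}$ as a dg $\mathcal{A}$-module, and the tilting hypothesis ${\rm Hom}_{\mathcal{T}}(T, T[n]) = 0$ for $n \neq 0$ is exactly the statement that $F$ induces isomorphisms ${\rm Hom}_{\mathcal{T}}(T, T[n]) \xrightarrow{\cong} {\rm Hom}_{{\rm D}(\mathcal{A})}(\mathcal{A}, \mathcal{A}[n])$ for every $n \in \Z$. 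A standard dévissage then promotes this to all of $\mathcal{T}$: the full subcategory of objects on which $F$ is fully faithful is triangulated and closed under summands (using idempotent-closedness of both sides), contains $T$, and therefore coincides with ${\rm thick}(T) = \mathcal{T}$. Essential surjectivity onto ${\rm D}_{{\rm perf}}(\mathcal{A})$ follows because the image is a thick subcategory containing $\mathcal{A}$, and $\mathcal{A}$ generates ${\rm D}_{{\rm perf}}(\mathcal{A})$.

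For (ii), the crucial point is that ${\rm H}^n(\mathcal{A}) = {\rm Hom}_{\mathcal{T}}(T, T[n])$ vanishes for $n \neq 0$ and equals $A$ for $n = 0$, so $\mathcal{A}$ is formal via a two-step truncation: $\tau_{\leq 0}(\mathcal{A})$ is a sub-dg-algebra (closure of the product on degree-zero cocycles is Leibniz), the inclusion $\tau_{\leq 0}(\mathcal{A}) \hookrightarrow \mathcal{A}$ is a quasi-isomorphism, and the projection $\tau_{\leq 0}(\mathcal{A}) \twoheadrightarrow A$ is a quasi-isomorphism of dg algebras. Derived Morita theory turns this zigzag into equivalences ${\rm D}(\mathcal{A}) \simeq {\rm D}(\tau_{\leq 0}(\mathcal{A})) \simeq {\rm D}(A)$ which restrict to perfect complexes and yield $\Theta$; composing $\Theta \circ F$ produces the advertised equivalence $\mathcal{T}^{op} \simeq {\rm D}_{{\rm perf}}(A)$.

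The main obstacle is really the bookkeeping with the dg enhancement rather than any single hard step: one must verify that $F$ actually lands in perfect complexes (which rests on $T$ being a compact generator of its thick closure), and one must keep careful track of left versus right module conventions so that the op appears on the correct side. Once the dg setup is cleanly in place, the tilting and formality arguments are essentially formal, and the theorem reduces to the slogan that a tilting object determines its triangulated category up to derived Morita equivalence.
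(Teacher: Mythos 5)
Your proof is correct and takes essentially the same route as the paper and its source (Keller): for (i), full faithfulness on $T$ promoted by d\'evissage through ${\rm thick}(T)=\mathcal{T}$ with idempotent-closedness handling summands, and for (ii), the truncation zigzag $\mathcal{A}\xleftarrow{\sim}\tau^{\leq 0}\mathcal{A}\xrightarrow{\sim}{\rm H}^0(\mathcal{A})=A$ with derived base change, which is exactly the quasi-isomorphism argument the paper records for $\Theta$. One minor conceptual point: the isomorphisms ${\rm Hom}_{\mathcal{T}}(T,T[n])\cong{\rm Hom}_{{\rm D}(\mathcal{A})}(\mathcal{A},\mathcal{A}[n])$ hold automatically by construction of the enhancement (both sides compute ${\rm H}^n(\mathcal{A})$), so the tilting vanishing is not what drives part (i) but is needed precisely so that $\mathcal{A}$ has cohomology concentrated in degree zero, i.e., for part (ii).
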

The equivalence $\Theta$ follows from base change over the quasi-isomorphisms $\mathcal{A} \xleftarrow{\sim} \tau^{\leq 0}\mathcal{A} \xrightarrow{\sim} {\rm H}^0(\mathcal{A}) = A$. The first equivalence is usually stated covariantly by using ${\rm Hom}_{\mathcal{C}}(T, -)$, however we shall find the contravariant version better suited for calculations.

Letting $\mathcal{T} = \underline{{\rm MCM}}(gr R)$, note that $\mathcal{T}$ is Krull-Schmidt and thus idempotent closed, and algebraic as we may take $\mathcal{C}$ to be the dg category of complete resolutions, so that ${\rm H}^0(\mathcal{C}) = \mathcal{K}^{\infty}_{ac}(proj R) \cong \underline{{\rm MCM}}(gr R)$. We will write ${\rm R\underline{Hom}}(M, N) = {\rm Hom}_{\mathcal{C}}(C(M), C(N))$, where $C(M)$ and $C(N)$ are complete resolutions of $M, N$. We are left with providing the tilting module $T$.

Most tilting results for MCM modules go through Orlov's Theorem, and this paper is no exception; we briefly recall the result in the form we need. Let $B = k[Z]$ be the homogeneous coordinate ring of a projective variety $Z$. Let ${\rm D}^{b}(gr_{\ge i} B) \subset {\rm D}^{b}(gr B)$ be the subcategory consisting of complexes whose cohomology modules ${\rm H}^n$ are generated in degree $\geq i$. Then ${\rm R\Gamma}_{\geq i}(Z, -): {\rm D}^b(Z) \to {\rm D}^b(gr_{\ge i} B)$ can be shown to be fully faithful.
\begin{thm}[Orlov, \cite{MR2641200}] Assume that $B$ is Gorenstein with $a > 0$. Then for each $i \in \Z$ there is an embedding of triangulated categories
\[
\Psi_i: \underline{{\rm MCM}}(gr B) \to {\rm D}^b(gr_{\ge i} B)
\]
right inverse to MCM approximation, and a semiorthogonal decomposition
\[
\Psi_i\big(\underline{{\rm MCM}}(gr B)\big) = \big\langle k(-i), k(-i-1), ..., k(-i-a+1), {\rm R\Gamma}_{\geq i+a}({\rm D}^b(Z)) \big\rangle.
\]
\end{thm}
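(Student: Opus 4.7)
The plan is to reproduce Orlov's original strategy, whose main content is to realize the singularity category $\underline{{\rm MCM}}(gr B) \cong {\rm D}_{sg}(gr B)$ as an explicit full subcategory of ${\rm D}^b({\rm gr}_{\ge i} B)$ and to read off a semiorthogonal decomposition from the $a$-invariant. Set up three subcategories of ${\rm D}^b({\rm gr}\, B)$: the perfect complexes ${\rm Perf}(gr B)$; the subcategory $\mathcal{F}^{<i}$ of bounded complexes with finite-dimensional cohomology supported in graded degrees $< i$; and ${\rm D}^b({\rm gr}_{\ge i} B)$ itself. The key structural input is that ${\rm D}^b({\rm gr}_{\ge i} B)$ is generated by the twisted residue fields $k(-j)$ for $j \ge i$ together with the free modules $B(-j)$ for $j \ge i$, and that the kernel of the projection ${\rm D}^b({\rm gr} B) \to {\rm D}_{sg}({\rm gr} B)$ is exactly ${\rm Perf}({\rm gr} B)$.

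First, construct $\Psi_i$. Given an MCM module $M$, pick any $n \gg 0$ so that $M \cong {\rm syz}_B^n(N)$ for some module $N$ via a finite projective resolution, and let $\Psi_i(M)$ be the two-step truncation of this presentation adjusted so that all graded components sit in degrees $\ge i$; concretely this means replacing the offending tail in degrees $< i$ by a perfect correction, using that the defect between any two such choices is automatically perfect. Independence of choices and functoriality then follow because any two presentations differ by a perfect complex, which becomes zero in the singularity category. The composite of $\Psi_i$ with MCM approximation is naturally isomorphic to the identity essentially by construction.

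Second, derive the semiorthogonal decomposition. By Serre's theorem ${\rm qgr}\, B \simeq {\rm Coh}(Z)$, the functor ${\rm R\Gamma}_{\ge i+a}: {\rm D}^b(Z) \to {\rm D}^b({\rm gr}_{\ge i+a} B)$ is fully faithful once the cutoff passes Castelnuovo--Mumford regularity, and by construction its image is right-orthogonal to any finite-dimensional module in degrees within the interval. Local duality combined with $\omega_B \cong B(a)$ gives
\[
{\rm Ext}^*_B\bigl(k(-j),\,{\rm R\Gamma}_{\ge i+a}(\mathcal{E})\bigr) = 0 \quad\text{for } i \le j \le i+a-1,
\]
yielding the required orthogonality of the window $\langle k(-i), \ldots, k(-i-a+1)\rangle$ relative to ${\rm R\Gamma}_{\ge i+a}({\rm D}^b(Z))$. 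Generation of $\Psi_i(\underline{{\rm MCM}})$ by the assembled pieces is a filtration argument: any object in the image can be truncated into its sheaf-theoretic part (lying past $i+a$) and a finite-length remainder, which is necessarily a successive extension of the $k(-j)$ in the window once perfect corrections are absorbed into $\Psi_i$.

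The main obstacle is the construction and naturality of $\Psi_i$, together with pinning down the window length as exactly $a$. The coherence of the 'perfect correction' depends on comparing two different complete-resolution representatives and is where the Gorenstein hypothesis enters most delicately: one needs $\omega_B \cong B(a)$ with $a>0$ for the duality on both ${\rm D}^b(Z)$ (Serre duality with $\omega_Z$) and $\underline{{\rm MCM}}(gr B)$ (the Auslander--Reiten--Serre duality $\mathbb{S} = (a)[d-1]$) to agree under $\Psi_i$, and for the a window of $a$ consecutive twisted points $k(-i), \ldots, k(-i-a+1)$ to be precisely what mediates between the sheaf-theoretic and perfect parts. Once this is verified the semiorthogonal decomposition closes by a straightforward comparison of $\operatorname{Hom}$-spaces.
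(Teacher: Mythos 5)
The paper does not prove this statement: it is quoted from Orlov's paper, and the only supplement used later is the concrete description of $\Psi_i$ as a truncated complete resolution (Lemma \ref{unstabilization}). So your sketch can only be measured against Orlov's own argument, and against it there are two genuine gaps, even though your overall outline (embed ${\rm D}_{sg}(gr B)\cong \underline{\rm MCM}(gr B)$ into ${\rm D}^b(gr_{\ge i}B)$ and read off the window from the $a$-invariant) is the right one. First, the construction of $\Psi_i$: you define it by an ad hoc ``perfect correction'' of a chosen syzygy presentation and justify independence of choices and functoriality ``because any two presentations differ by a perfect complex, which becomes zero in the singularity category.'' But $\Psi_i$ must land in ${\rm D}^b(gr_{\ge i}B)$, where perfect complexes are not zero, so this argument establishes neither well-definedness nor functoriality in the target; it only says the composite with the quotient to ${\rm D}_{sg}$ is unambiguous, which is circular. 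Orlov's proof does the real work differently: using the elementary decompositions ${\rm D}^b(gr_{\ge i}B)=\big\langle k(-i),\,{\rm D}^b(gr_{\ge i+1}B)\big\rangle=\big\langle {\rm D}^b(gr_{\ge i+1}B),\,B(-i)\big\rangle$ he exhibits inside ${\rm D}^b(gr_{\ge i}B)$ an orthogonal complement of the subcategory generated by the free modules $B(-j)$, $j\ge i$, shows the Verdier quotient to ${\rm D}_{sg}(gr B)$ restricts to an equivalence on it, and defines $\Psi_i$ as the inverse of that equivalence; functoriality and the right-inverse property are then automatic, and the explicit model $C_{[\ge i]}$ of Lemma \ref{unstabilization} is a consequence, not the definition.

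Second, your semiorthogonality is asserted in the wrong direction. For $\big\langle k(-i),\dots,k(-i-a+1),\,{\rm R\Gamma}_{\ge i+a}({\rm D}^b(Z))\big\rangle$ the required vanishing is of Homs from right to left, i.e. ${\rm Hom}\big({\rm R\Gamma}_{\ge i+a}(\mathcal{E}),\,k(-j)[s]\big)=0$ for $i\le j\le i+a-1$ and all $s$, which holds simply because a module generated in degrees $\ge i+a$ has ${\rm Tor}_*(-,k)$ concentrated in internal degrees $\ge i+a$; no local duality is needed. The vanishing you invoke, ${\rm Ext}^*_B\big(k(-j),\,{\rm R\Gamma}_{\ge i+a}(\mathcal{E})\big)=0$, is not what a semiorthogonal decomposition requires and is false in general: these Ext groups are precisely the nontrivial gluing data (already ${\rm Ext}^1$ from $k(-j)$ into a depth-one MCM module such as $L_i$ is typically nonzero). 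Two smaller inaccuracies: full faithfulness of ${\rm R\Gamma}_{\ge i}(Z,-)$ does not require passing a Castelnuovo--Mumford regularity threshold, and the Gorenstein hypothesis enters Orlov's argument through ${\rm RHom}_B(k,B)\cong k(-a)[-\dim B]$, which is what makes a window of exactly $a$ twists of $k$ mediate between the ``torsion'' and ``free'' decompositions, rather than through a comparison of Serre dualities on the two sides.
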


We will need a description of $\Psi_i$ for calculations.
\begin{lem}[Buchweitz, Burke-Stevenson {\cite[Sect. 5]{MR3496862}}]\label{unstabilization} Let $M$ be a graded MCM $B$-module with complete resolution $C$. Let $C_{[\geq i]}$ be the complex of graded free modules obtained from $C$ by removing free module summands generated in degrees $< i$. Then $\Psi_i(M) \cong C_{[\geq i]}$ in ${\rm D}^b(gr_{\geq i} B)$. 
\end{lem}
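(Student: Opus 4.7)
Writing $C_n = \bigoplus_j B(-j)^{\beta_{n,j}}$, the minimality of $C$ means $d(C_n) \subseteq \mathfrak{m}C_{n-1}$, so a generator in degree $j$ is sent to a combination of generators of strictly lower degree. Hence the subspace $C_{[\leq i-1]}$ spanned by free summands generated in degrees $<i$ forms a subcomplex of $C$, the quotient $C_{[\geq i]} := C/C_{[\leq i-1]}$ is a well-defined complex of graded free modules with all generators in degrees $\geq i$, and the short exact sequence $0 \to C_{[\leq i-1]} \to C \to C_{[\geq i]} \to 0$ combined with acyclicity of $C$ yields a quasi-isomorphism $C_{[\geq i]} \simeq C_{[\leq i-1]}[1]$.

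\textbf{Step 2 (boundedness and stabilization).} The minimal free resolution of $M$ has generator degrees tending to $+\infty$ as $n \to +\infty$, while dualizing a resolution of $M^*$ shows that the generator degrees of $C_n$ tend to $-\infty$ as $n \to -\infty$. Hence $C_{[\leq i-1]}$ vanishes for $n \gg 0$ and $C_{[\geq i]}$ vanishes for $n \ll 0$; combined with the short exact sequence above, $C_{[\geq i]}$ has bounded cohomology, visibly generated in degrees $\geq i$, so $C_{[\geq i]} \in {\rm D}^b(gr_{\geq i}B)$. For $n \gg 0$ one has $C_{[\geq i],n} = C_n$, so ${\rm coker}(C_{[\geq i],n+1}\to C_{[\geq i],n}) = {\rm syz}_B^n(M)$, and the MCM approximation recipe from the background section yields $(C_{[\geq i]})^{st} \cong {\rm syz}_B^n(M)[n] \cong M$ in $\underline{{\rm MCM}}(gr B)$.

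\textbf{Step 3 (matching Orlov's component).} Stabilization alone pins $C_{[\geq i]}$ down only up to perfect complexes, so the remaining task is to verify that $C_{[\geq i]}$ lies in the distinguished image $\Psi_i(\underline{{\rm MCM}}(gr B)) \subseteq {\rm D}^b(gr_{\geq i}B)$ from Orlov's theorem, equivalently that ${\rm RHom}_B(k(-j), C_{[\geq i]}) = 0$ for $i \leq j \leq i+a-1$. This is the main obstacle: it requires a careful degree count on the free summands of $C_{[\geq i]}$ against the Koszul resolution of $k(-j)$, using that every generator has degree strictly $\geq i$ and that the canonical shift $a$ controls the reach of the possible Ext degrees. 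Once this orthogonality is established, $C_{[\geq i]}$ and $\Psi_i(M)$ both have stabilization $M$ and both lie in the same semi-orthogonal component, so by full faithfulness of $\Psi_i$ they are isomorphic in ${\rm D}^b(gr_{\geq i}B)$.
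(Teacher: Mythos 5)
Your Steps 1 and 2 are fine (Step 1 does not even need minimality: the differential preserves internal degree, so the submodule of $C_n$ generated by its elements of degree $<i$ is automatically a subcomplex, and $C_{[\geq i]}$ is the quotient). The gap is in Step 3, which is where all the content sits -- the paper itself offers no proof, quoting the statement from Burke--Stevenson, so this is exactly the step you would have to supply. Your proposed membership criterion for the image of $\Psi_i$, namely ${\rm RHom}_B(k(-j), C_{[\geq i]})=0$ for $i\le j\le i+a-1$, cannot be correct: by the very form of Orlov's theorem quoted in the paper, the objects $k(-i),\dots,k(-i-a+1)$ are themselves members of $\Psi_i\big(\underline{\rm MCM}(gr B)\big)$, so objects of the image need not be right-orthogonal to them. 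For instance, for $B=k[x]/(x^2)$ and $M=k$ one gets $C_{[\geq i]}\simeq$ the minimal resolution of $k(-i)$, which is in the image but certainly not orthogonal to $k(-i)$. In effect you have written down the orthogonality that carves out the copy of ${\rm R}\Gamma_{\geq i+a}\big({\rm D}^b(Z)\big)$, not the one that carves out the section of the singularity category.

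The condition that actually characterizes the image of $\Psi_i$ inside ${\rm D}^b(gr_{\geq i}B)$ is left-orthogonality to the twisted free modules: ${\rm Hom}(Y, B(-j)[m])=0$ for all $j\ge i$ and all $m$. Combined with your Step 2 (the image of $C_{[\geq i]}$ in ${\rm D}_{sg}(gr B)$ is $M$) and the semiorthogonal decomposition underlying Orlov's construction, this pins the object down uniquely, which is the uniqueness argument you sketch at the end. Moreover this orthogonality does hold for $C_{[\geq i]}$, by an argument using pieces you already have: since $C_{[\geq i]}$ vanishes on the coresolution side it is, cohomologically, a bounded above complex of projectives, so maps out of it are computed by the naive Hom complex; the short exact sequence $0\to{\rm Hom}^\bullet(C_{[\geq i]},B(-j))\to{\rm Hom}^\bullet(C,B(-j))\to{\rm Hom}^\bullet(C_{[<i]},B(-j))\to 0$ has acyclic middle term because complete resolutions are totally acyclic, and its right-hand term is the zero complex, since every generator of $C_{[<i]}$ sits in degree $l<i\le j$ while ${\rm Hom}_{gr B}(B(-l),B(-j))=B_{l-j}=0$ for $l<j$. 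So the missing ingredient is not a degree count against the Koszul resolution of $k(-j)$ but total acyclicity of $C$ together with this elementary degree observation; as written, your Step 3 both asserts the wrong orthogonality and defers its verification, so the proof is incomplete at its crucial point.
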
 

We can now describe the tilting object $T$.
\begin{thm}\label{tiltingobject}
The MCM module $T = \left(\bigoplus_{i = 1}^{4} L_i \right) \oplus k^{st}$ is a tilting module for $\underline{{\rm MCM}}(gr R)$ with endomorphism algebra $A = kQ$, with $Q$ as in Section $2$.
\end{thm}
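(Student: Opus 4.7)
The plan is to check three things: ${\rm thick}(T) = \underline{{\rm MCM}}(gr R)$, vanishing of self-extensions $\widehat{\rm Ext}^n_{gr R}(T, T) = 0$ for $n \neq 0$, and $\underline{{\rm End}}(T) \cong kQ$.

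For the Hom and Ext computations, I reduce everything to ordinary positive Ext using two tools: the stabilization bijection of Section 2, giving $\widehat{\rm Ext}^n(M,N) \cong {\rm Ext}^n(M, N)$ for $n \geq 1$ when $M$ is MCM, and Auslander-Reiten-Serre duality with $\mathbb{S} = (1)$, giving $\widehat{\rm Ext}^{-n}(M, N) \cong D\, \widehat{\rm Ext}^{n-1}(N, M(1))$. Since the minimal resolutions of $L_i$ and $k^{st}$ are linear (Proposition \ref{betti}), the relevant Ext groups in internal degree $0$ can be computed by tracing through the Buchweitz-Pham-Roberts complete resolutions of Section 3.1, using the geometric configuration of lines $\{l_k\}$ and points $\{p_j\}$. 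The output is $\underline{{\rm Hom}}(L_i, L_j) = k \delta_{ij}$, $\underline{{\rm Hom}}(L_i, k^{st}) = k$ (generated by the lift of the canonical surjection $L_i \twoheadrightarrow k$), $\underline{{\rm Hom}}(k^{st}, L_i) = 0$, $\underline{{\rm End}}(k^{st}) = k$, together with $\widehat{\rm Ext}^{n}(T, T) = 0$ for $n \neq 0$. The resulting $9$-dimensional algebra $\underline{{\rm End}}(T)$ has five orthogonal idempotents (the identities on the summands) and four generating arrows $L_i \to k^{st}$, matching $kQ$ precisely via the quiver arrows $i \to 0$.

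For thick-generation, Orlov's theorem with $a = 1$ applied at $i = -1$ yields the semiorthogonal decomposition $\underline{{\rm MCM}}(gr R) = \langle k^{st}(1), L_1, \ldots, L_4\rangle$ (after MCM approximation of $k(1)$ and of the $\Gamma_{\geq 0}(X, \mathcal{O}_{p_i}) = L_i$), so it suffices to show $k^{st}(1) \in {\rm thick}(T)$. The Auslander-Reiten triangle ending at $k^{st}$, which exists by Reiten-Van den Bergh, takes the form $k^{st}(1)[-1] \to Y \to k^{st} \to k^{st}(1)$, and rotation exhibits $k^{st}(1)$ as the cone of $Y \to k^{st}$. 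By Ringel's proposition (Proposition \ref{Ringel}) combined with the Hom computation $\underline{{\rm Hom}}(L_i, k^{st}) = k$, the middle term $Y$ decomposes as $\bigoplus_i L_i$, which lies in $T$.

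The main obstacle is confirming this identification of $Y$: one must rule out contributions to the space of irreducible maps into $k^{st}$ from other indecomposables of $\underline{{\rm MCM}}(gr R)$ (whose classification is precisely what Theorem \ref{E} is used to establish). I expect this to be handled by constructing the almost-split extension directly from the Buchweitz-Pham-Roberts complete resolution of $k^{st}$, producing an explicit morphism $\bigoplus_i L_i \to k^{st}$ whose fiber can be identified with $k^{st}(1)[-1]$ via Serre duality, thereby bypassing the need for any a priori structure on the AR quiver.
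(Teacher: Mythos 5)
Your overall frame (generation, vanishing of $\widehat{\rm Ext}^{n\neq 0}$, computation of $\underline{\rm End}(T)$) is the right one, and your claimed Hom/Ext outputs are correct, but there is a genuine gap at the crucial generation step. After invoking Orlov's semiorthogonal decomposition $\underline{\rm MCM}(gr R) = \langle k^{st}(1), L_1, \ldots, L_4\rangle$, you must get $k^{st}(1)$ into ${\rm thick}(T)$, and your route — the almost-split triangle $k^{st}(1)[-1] \to Y \to k^{st} \to k^{st}(1)$ with $Y \cong \bigoplus_i L_i$ — is circular as stated. Proposition \ref{Ringel} identifies the summands of $Y$ only once one controls \emph{all} indecomposables admitting irreducible maps into $k^{st}$, which is essentially the classification this theorem is being used to establish; knowing $\dim_k \underline{\rm Hom}(L_i, k^{st}) = 1$ neither shows these maps are irreducible nor excludes other indecomposables from contributing to $Y$. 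Your acknowledged fallback ("I expect this to be handled by constructing the almost-split extension directly from the Buchweitz--Pham--Roberts resolution") is exactly the missing content: no morphism $\bigoplus_i L_i \to k^{st}$ with cone $k^{st}(1)$ is actually produced, so generation is not proved. (For the record, any distinguished triangle $k^{st}(1)[-1] \to \bigoplus_i L_i \to k^{st} \to k^{st}(1)$ would suffice — it need not be almost split — but you still have to exhibit one.)

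The paper closes this step without any Auslander--Reiten input: it observes that $\langle k^{st}(1), L_1,\ldots,L_4\rangle$ is a full (strong) exceptional collection and mutates $k^{st}(1)$ to the right end; by Helix theory the mutated object is $k^{st}(1)\otimes_R \omega_R^{-1} = k^{st}$ (since $\omega_R = R(1)$), and mutation preserves fullness, so $\langle L_1,\ldots,L_4,k^{st}\rangle$ generates and ${\rm thick}(T) = \underline{\rm MCM}(gr R)$. The same structure also spares most of your Ext computations: semiorthogonality from Orlov plus orthogonality of the $\mathcal{O}_{p_i}$ and exceptionality of each term reduce the self-extension check to ${\rm Ext}^n_{gr R}(L_i, k)_0 = {\rm Tor}^R_n(L_i,k)_0$, which vanishes for $n\neq 0$ by the Koszul resolution — whereas your proposal asserts the full list of Tate cohomology vanishings ("the output is\ldots") without deriving them from the complete resolutions. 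A minor additional slip: with $\mathbb{S} = (1)$ and $\dim R - 1 = 0$, Serre duality reads $\widehat{\rm Ext}^{-n}(M,N) \cong {\rm D}\,\widehat{\rm Ext}^{n}(N, M(1))$, not degree $n-1$. To repair your argument, either adopt the mutation step, or genuinely construct the extension of $k^{st}$ by $\bigoplus_i L_i$ from the complete resolution of $k^{st}$ and identify its cone; as written, the proof is incomplete precisely where it matters.
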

\begin{proof} 
Applying Orlov's Theorem to $Z = X = \{p_1, p_2, p_3, p_4 \}$ with $i = -a = -1$, we have a tilting object $\left(\bigoplus_{i = 1}^4 \mathcal{O}_{p_i}\right)$ for ${\rm D}^b(X)$, and ${\rm R\Gamma}_{\ge i+a}(X, \mathcal{O}_{p_i}) = \Gamma_{\ge 0}(X, \mathcal{O}_{p_i}) = k[X]/I(p_i) = L_i$. This gives
\[
\underline{{\rm MCM}}(gr R) = \big \langle k^{st}(1), L_1, L_2, L_3, L_4 \big \rangle.
\]
This is already a full strong exceptional collection, but for convenience we mutate the collection to move $k^{st}(1)$ to the rightmost position. By Helix Theory \cite{MR1074777}, the resulting collection is (ignoring suspensions) 
\[
\big \langle L_1, L_2, L_3, L_4, k^{st}(1) \otimes_R \omega_R^{-1} \big \rangle = \big \langle L_1, L_2, L_3, L_4, k^{st} \big \rangle
\] 
which thus remains a full exceptional sequence. We may apply $\Psi_0$ to perform calculations in ${\rm D}^b(gr_{\ge 0} R)$, and by the previous lemma and \ref{betti} we find that $\Psi_0(k^{st}) = k$ and $\Psi_0(L_i) = L_i$. It follows that $U = \left(\bigoplus_{i = 1}^4 L_i \right) \oplus k$ generates $\Psi_0\big(\underline{{\rm MCM}}(gr R)\big)$. Only the extension groups ${\rm Ext}_{gr R}^n(L_i, k)$ are relevant, with dimensions given by
\[
{\rm dim}\ {\rm Ext}_{gr R}^n(L_i, k) = {\rm dim}\ {\rm Tor}^R_n(L_i, k)_0 = \begin{cases} 1, & n = 0\\
0, & n \neq 0.
\end{cases}
\]
Then $T = U^{st}$ is a tilting module since $(-)^{st}$ is an inverse for $\Psi_0$, and $\underline{{\rm End}}_{gr R}(T) \cong {\rm End}_{gr R}(U) \cong kQ$.
\end{proof}
\begin{cor}
There is an equivalence of triangulated categories
\[
{\rm E} = \Theta \circ {\rm R\underline{Hom}}(-, T): \underline{{\rm MCM}}(gr R)^{op} \xrightarrow{\cong} {\rm D}^b(kQ).
\]

\end{cor}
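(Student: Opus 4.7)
The plan is to deduce the corollary as a direct application of Keller's theorem (the first result stated in the section above) to $\mathcal{T} = \underline{{\rm MCM}}(gr R)$ equipped with the tilting object $T = \bigl(\bigoplus_{i=1}^4 L_i\bigr) \oplus k^{st}$ produced in Theorem \ref{tiltingobject}. Since the substantive work is carried out in that theorem, the corollary amounts to a formal packaging step: verify the hypotheses of Keller's theorem and compose the two resulting equivalences.

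First I verify the hypotheses. The category $\underline{{\rm MCM}}(gr R)$ is Krull-Schmidt by Lemma \ref{krull-schmidt}, hence in particular idempotent closed. It is algebraic via the dg enhancement $\mathcal{C}$ whose objects are the minimal complete resolutions of graded MCM modules with the usual Hom complexes of graded morphisms, under which ${\rm H}^0(\mathcal{C}) = \mathcal{K}^{\infty}_{ac}({\rm proj}\, R) \cong \underline{{\rm MCM}}(gr R)$ as recalled in Section 2. That $T$ is tilting and that $A := {\rm End}_{\underline{{\rm MCM}}(gr R)}(T) \cong kQ$ is precisely the content of Theorem \ref{tiltingobject}.

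Invoking Keller's theorem for $(\mathcal{T}, \mathcal{C}, T)$ then produces the two equivalences
\[
{\rm R\underline{Hom}}(-, T): \underline{{\rm MCM}}(gr R)^{op} \xrightarrow{\cong} {\rm D}_{\rm perf}(\mathcal{A}), \qquad \Theta: {\rm D}_{\rm perf}(\mathcal{A}) \xrightarrow{\cong} {\rm D}_{\rm perf}(A),
\]
where $\mathcal{A} = {\rm Hom}_{\mathcal{C}}(C(T), C(T))$ is the endomorphism dg algebra. Because $A = kQ$ is finite-dimensional and hereditary (gldim $A \le 1$), the subcategory ${\rm D}_{\rm perf}(A)$ coincides with ${\rm D}^{b}(kQ)$, so composing the two equivalences yields the functor ${\rm E}$ as claimed.

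There is no real obstacle remaining once Theorem \ref{tiltingobject} is in hand. The only conceptual point worth emphasising is the passage from the dg algebra $\mathcal{A}$ to its zeroth cohomology algebra $A$ encoded in $\Theta$: this rests on the tilting vanishing ${\rm Hom}_{\mathcal{T}}(T, T[n]) = 0$ for $n \neq 0$, which ensures that the truncation maps $\mathcal{A} \xleftarrow{\sim} \tau^{\leq 0}\mathcal{A} \xrightarrow{\sim} A$ are quasi-isomorphisms and hence that base change along them induces an equivalence on perfect derived categories, as noted immediately after the statement of Keller's theorem.
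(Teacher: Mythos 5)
Your proposal is correct and matches the paper's own route: the paper verifies the hypotheses of Keller's theorem (idempotent closedness via Krull--Schmidt, algebraicity via the dg category of complete resolutions) just before stating it, supplies the tilting object in Theorem \ref{tiltingobject}, and obtains the corollary by composing the two equivalences, with ${\rm D}_{\rm perf}(kQ) = {\rm D}^b(kQ)$ because $kQ$ has finite global dimension. Nothing is missing.
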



\subsection{Reduction to the Four Subspace problem}
We are left with calculating the image of our proposed list of indecomposable modules under ${\rm E}$. 

\begin{lem}\label{tiltingcalc} Let $M$ be a MCM module such that ${\rm H}^i\left({\rm R\underline{Hom}}(M, T)\right) = 0$ for $i \neq 0$. Then 
\[
{\rm E}(M) = \Theta \circ {\rm R\underline{Hom}}(M, T) \cong \underline{{\rm Hom}}_{gr R}(M, T)
\]
in ${\rm D}^b(A)$, where $A = \underline{{\rm End}}_{gr R}(T)$ acts naturally on the left of the latter.
\end{lem}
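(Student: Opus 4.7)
My plan is to argue that the hypothesis forces ${\rm R\underline{Hom}}(M, T)$ to be formal as a dg $\mathcal{A}$-module, so that $\Theta$ simply replaces it by its cohomology in degree zero. Set $X := {\rm R\underline{Hom}}(M, T)$, viewed as a dg module over $\mathcal{A} = {\rm End}_{\mathcal{C}}(T)$. Recall that $\Theta$ is defined via the roof of dg algebra quasi-isomorphisms $\mathcal{A} \xleftarrow{\sim} \tau^{\leq 0}\mathcal{A} \xrightarrow{\sim} H^0(\mathcal{A}) = A$ as a composite of restriction of scalars and derived base change; both legs are equivalences of derived categories of dg modules, and in particular both preserve the cohomology of a dg module naturally as a graded module over $H^0$ of the respective dg algebra.

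Under the hypothesis $H^i(X) = 0$ for $i \neq 0$, the truncation functors provide a zigzag of quasi-isomorphisms of dg $\tau^{\leq 0}\mathcal{A}$-modules
\[
\underline{{\rm Hom}}_{gr R}(M, T) \;=\; H^0(X) \;\xleftarrow{\sim}\; \tau^{\leq 0} X \;\xrightarrow{\sim}\; X,
\]
where on $H^0(X)$ the $\tau^{\leq 0}\mathcal{A}$-action factors through $A = H^0(\tau^{\leq 0}\mathcal{A})$ and coincides with the usual composition action of $\underline{{\rm End}}_{gr R}(T)$ on $\underline{{\rm Hom}}_{gr R}(M, T)$. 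Applying $\Theta$, which by construction inverts the left leg and pushes forward along the right leg, therefore sends $X$ to the object $\underline{{\rm Hom}}_{gr R}(M, T)$ placed in degree zero. Since any object of ${\rm D}^b(A)$ with cohomology concentrated in a single degree is canonically isomorphic in ${\rm D}^b(A)$ to that cohomology module, the conclusion follows.

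The main subtlety, and really the only point requiring any care, is to verify that the $A$-module structure transported through the zigzag above is exactly the Yoneda composition action on $\underline{{\rm Hom}}_{gr R}(M, T) = H^0({\rm R\underline{Hom}}(M, T))$. I would handle this by choosing any cocycle lift $\widetilde{\alpha} \in \tau^{\leq 0}\mathcal{A}$ of a class $[\alpha] \in A$ and tracing its action on a representative of a class in $H^0(X)$; the choice of lift is immaterial because coboundaries in $\tau^{\leq 0}\mathcal{A}$ act trivially on cohomology, and at the level of cocycles the action is literally composition of dg module maps out of a complete resolution of $M$, which descends precisely to the Yoneda product. Apart from this bookkeeping, every other step is formal, so I do not expect any genuine obstacle.
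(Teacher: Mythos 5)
Your proposal is correct and takes essentially the same route as the paper: the paper also uses that $\Theta$ (base change along the quasi-isomorphisms $\mathcal{A} \xleftarrow{\sim} \tau^{\leq 0}\mathcal{A} \xrightarrow{\sim} A$) preserves cohomology, so that ${\rm E}(M)$ has cohomology concentrated in degree zero and is therefore isomorphic in ${\rm D}^b(A)$ to its zeroth cohomology $\underline{{\rm Hom}}_{gr R}(M,T)$, with the $A$-action ``compatible through $\Theta$''. Your truncation zigzag and the cocycle-lift check of the Yoneda action merely make explicit the details the paper leaves as an easy verification.
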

\begin{proof}
By assumption ${\rm H}^i \left(\Theta \circ {\rm R\underline{Hom}}(M, T)\right) = 0$ for $i \neq 0$ in ${\rm D}^b(A)$ so ${\rm E}(M)$ must be quasi-isomorphic to its zero cohomology $\underline{{\rm Hom}}_{gr R}(M, T)$, and it isn't hard to see that the endomorphism action is compatible through $\Theta$.
\end{proof}

Let us prove the results of section $3.1$. First, note that indecomposable summands of $T$ are sent to indecomposable summands of $kQ$, so that ${\rm E}(k^{st}) = P(0)$ and ${\rm E}(L_i) = P(i)$ for $i = 1, 2, 3, 4$. The complexes in the preprojective-preinjective components are all of the form $\tau^{m} P(i)[n]$, which corresponds under ${\rm E}$ to $syz_R^{m-n} k^{st}(-m)$ and $syz_R^{m-n} L_i(-m)$. Keeping in mind contravariance of ${\rm E}$, the component $\mathcal{C}$ is sent onto $\mathcal{PI}$. We have already seen that regular representations of $Q$ must correspond to MCM modules of complexity one, thus we have classified indecomposables of complexity two.

We now prove the results of section $3.2$. This will follow from the next theorem, which constitutes the main result of this paper.
\begin{thm}\label{maincalc} We have the following isomorphisms in ${\rm D}^b(A)$:
\begin{enumerate}[i.]
\item ${\rm E}(N_{t}) \cong R_{t}$ for all $t \in \mathbb{P}^1$.
\item ${\rm E}(D_{t}^{\pm}) \cong S_{t}^{\pm}$ for $t = 0, 1, \infty$.
\end{enumerate}
\end{thm}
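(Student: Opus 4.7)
My plan is to apply Lemma \ref{tiltingcalc} to reduce each part to two checks: verifying that the Tate cohomology $\widehat{\rm Ext}^n_{gr R}(M, T)_0$ vanishes for $n \neq 0$, and then identifying the resulting left $A$-module $\underline{\rm Hom}_{gr R}(M, T)$ with the prescribed quiver representation. Since $T = k^{st} \oplus L_1 \oplus L_2 \oplus L_3 \oplus L_4$, the vertex idempotents decompose this Hom module into summands $\underline{\rm Hom}(M, L_i)$ at vertex $i$ for $i = 1, \ldots, 4$ and $\underline{\rm Hom}(M, k^{st})$ at vertex $0$, and the arrow $\alpha_i: L_i \to k^{st}$ acts on ${\rm E}(M)$ by post-composition, so the representation structure is completely determined by these Hom spaces together with these composition maps.

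For part (i), I feed the two-periodic minimal complete resolution of $N_t$ arising from $(\Phi_t^+, \Phi_t^-)$ into ${\rm Hom}_{gr R}(-, L_i)_0$. The term ${\rm Hom}_{gr R}(R(-n)^{\oplus 2}, L_i)_0$ identifies with $(L_i)_n^{\oplus 2}$, which vanishes for $n < 0$ and is one-dimensional for $n \geq 0$, and the cohomological differentials alternate between the scalar matrices $\Phi_t^+(p_i)^T$ and $\Phi_t^-(p_i)^T$ acting on $k^{\oplus 2}$. Because $X$ meets $V(Q_t)_{\rm sing}$ in the empty set, each $p_i$ is a smooth point of the conic $V(Q_t)$, so both of these matrices have rank one, and the relation $\Phi_t^+(p_i) \Phi_t^-(p_i) = Q_t(p_i) \cdot I = 0$ forces ${\rm im}(\Phi_t^+(p_i)^T) = \ker(\Phi_t^-(p_i)^T)$ and vice versa by a dimension count. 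This exactness yields $\widehat{\rm Ext}^n_{gr R}(N_t, L_i)_0 = 0$ for $n \neq 0$ together with $\underline{\rm Hom}(N_t, L_i) \cong \ker(\Phi_t^+(p_i)^T) \cong k$. For $T_0 = k^{st}$, the degree-zero concentration of $k$ immediately gives ${\rm Hom}_{gr R}(R(-n)^{\oplus 2}, k)_0 = 0$ for $n \neq 0$, so the Hom complex collapses to a single term, giving $\underline{\rm Hom}(N_t, k^{st}) \cong k^{\oplus 2}$ with all other Tate groups vanishing.

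To identify the resulting representation ${\rm E}(N_t)$, the composition map $\underline{\rm Hom}(N_t, L_i) \hookrightarrow \underline{\rm Hom}(N_t, k^{st})$ induced by the canonical quotient $L_i \twoheadrightarrow k$ is precisely the inclusion $\ker(\Phi_t^+(p_i)^T) \hookrightarrow k^{\oplus 2}$. Evaluating $\Phi_t^+$ directly at the four points $p_1 = [1{:}1{:}1]$, $p_2 = [-1{:}1{:}1]$, $p_3 = [-1{:}1{:}-1]$, $p_4 = [1{:}1{:}-1]$ yields the four one-dimensional subspaces spanned respectively by $[0, 1]^T$, $[1, 1]^T$, $[1, 0]^T$, and $[t_0, t_1]^T$, which is precisely the normalization of the fundamental representation $R_t$ from Section \ref{foursubspace}.

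For part (ii), I repeat the calculation using the size-one matrix factorizations $Q_t = l \cdot l'$ for the three singular conics. The cohomological differentials of ${\rm Hom}_{gr R}(-, L_i)_0$ applied to the two-periodic complete resolution of $D_t^+ = R/(l)$ alternate between scalar multiplication by $l(p_i)$ and $l'(p_i)$ on $(L_i)_n \cong k$; since $l(p_i) \cdot l'(p_i) = Q_t(p_i) = 0$ while $l$ and $l'$ never vanish simultaneously on $X$, exactly one of the two scalars is zero and the other a unit. This kills all higher Tate cohomology and leaves $\underline{\rm Hom}(D_t^+, L_i) \cong k$ precisely when $l(p_i) = 0$, and zero otherwise, while $\underline{\rm Hom}(D_t^+, k^{st}) \cong k$ by the same degree-of-$k$ argument as before. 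Reading off the pair of points lying on the line $V(l)$ recovers the dimension vector of $S_t^+$, and the two non-zero composition maps, being scalar, match the prescribed identity maps in $S_t^\pm$. The main technical obstacle is the final $\pm$-bookkeeping: verifying that $D_t^+$ (rather than $D_t^-$) corresponds to $S_t^+$ requires carefully tracking which linear factor of $Q_t$ appears in each $D_t^\pm$ and comparing to the partition $\{\varphi_i, \varphi_j\} \sqcup \{\varphi_p, \varphi_q\}$ chosen to normalize $S_t^\pm$ in Section \ref{foursubspace}.
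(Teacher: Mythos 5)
Your proposal is correct, and it reaches the conclusion by a genuinely different route than the paper. The paper gets the vanishing of ${\rm H}^i({\rm R\underline{Hom}}(N_t,T))$ for $i\neq 0$ structurally (indecomposability of $N_t$ plus formality of ${\rm D}^b(A)$ forces concentration in one degree once ${\rm H}^0\neq 0$), exhibits explicit chain maps only to get the lower bounds $\dim {\rm Hom}_{gr R}(N_t,L_i)\geq 1$, and then pins the dimension vector down with a representation-theoretic argument: $N_t$ has complexity one, so ${\rm E}(N_t)$ is regular and the defect $\partial_A$ vanishes, forcing $d_1=\dots=d_4=1$. You instead compute the entire internal-degree-zero Tate cohomology directly from the $2$-periodic complete resolution, using that each $p_i$ is a smooth point of every conic in the pencil so that $\Phi_t^{\pm}(p_i)$ has rank exactly one; this yields the vanishing for $i\neq 0$ and the exact dimensions $d_i=1$, $d_0=2$ in one stroke, and identifies the four lines in $k^2$ as $\ker(\Phi_t^{+}(p_i)^{T})$, which at $p_1=[1{:}1{:}1]$, $p_2=[-1{:}1{:}1]$, $p_3=[-1{:}1{:}-1]$, $p_4=[1{:}1{:}-1]$ indeed come out as the spans of $(0,1)$, $(1,1)$, $(1,0)$, $(t_0,t_1)$, i.e.\ the normalization of $R_t$. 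Your approach buys a uniform, purely computational treatment of all $t\in\mathbb{P}^1$ and of both $D_t^{+}$ and $D_t^{-}$ (the paper treats $D_t^{-}$ by applying $\tau$ to the $D_t^{+}$ computation), at the cost of relying on the specific shape of the complete resolutions rather than on general structure; the paper's defect argument is what lets it avoid proving that its displayed chain maps exhaust ${\rm Hom}(N_t,L_i)$.

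A few small points to tighten, none of which affects correctness. First, ${\rm Hom}_{gr R}(R(-n)^{\oplus 2},L_i)_0\cong (L_i)_n^{\oplus 2}$ is two-dimensional for $n\geq 0$ (it is $(L_i)_n$ that is one-dimensional); your subsequent rank-one argument on $k^{\oplus 2}$ is consistent with this, so it is just a slip of wording. Second, ``smooth point of $V(Q_t)$ implies rank one'' deserves a line: rank $\leq 1$ because $\det\Phi_t^{\pm}=Q_t$ vanishes at $p_i$, and rank $\geq 1$ because $\Phi_t^{\pm}(p)=0$ would force all partials of $\det\Phi_t^{\pm}=Q_t$ to vanish at $p$, contradicting smoothness (or simply evaluate at the four points, as you in effect do for $\Phi_t^{+}$; the same evaluation should be recorded for $\Phi_t^{-}$, which is needed for the exactness in higher degrees). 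Third, the deferred $\pm$-bookkeeping in part (ii) does check out with the paper's labelling $I(p_i)=(l_i,l_{i+1})$: $x-y$ vanishes at $p_1,p_4$, $x+z$ at $p_2,p_4$, $y+z$ at $p_3,p_4$, matching the supports of $S_0^{+}$, $S_1^{+}$, $S_\infty^{+}$ respectively, and the complementary factors give $S_t^{-}$ for $D_t^{-}$.
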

For ease of calculations note that we have $\Psi_0(L_i) = L_i$, $\Psi_0(k^{st}) = k$, $\Psi_0(N_{t}) = N_{t}$ and $\Psi_0(D_{t}^{\pm}) = D_{t}^{\pm}$, which follows from Lemma \ref{unstabilization}.

\begin{proof}
We first show $i$. Let $U = \Psi_0(T) = \big(\bigoplus_{i = 1}^4 L_i \big) \oplus k$. We have
\[
\underline{{\rm Hom}}_{gr R}(N_{t}, T) \cong {\rm Hom}_{gr R}(N_{t}, U) \twoheadrightarrow {\rm Hom}_{gr R}(N_{t}, k) = {\rm D}{\rm Tor}_0^{R}(N_{t}, k)_0 \neq 0
\]
where ${\rm D}$ denotes vector space dual. This means that ${\rm H}^0({\rm R\underline{Hom}}(N_{t}, T)) \neq 0$ and so ${\rm H}^i({\rm R\underline{Hom}}(N_{t}, T)) = 0$ for $i \neq 0$ since $N_{t}$ is indecomposable. By Lemma \ref{tiltingcalc} we have to compute the module structure on $\underline{{\rm Hom}}_{gr R}(N_{t}, T) \cong {\rm Hom}_{gr R}(N_{t}, U)$ under endomorphisms of $U$, or equivalently the maps in the diagram
\[
\xymatrix@R1.2pc@C.5pc{
{\rm Hom}_{gr R}(N_{t}, L_1) \ar[drr] & {\rm Hom}_{gr R}(N_{t}, L_2) \ar[dr] && {\rm Hom}_{gr R}(N_{t}, L_3) \ar[dl] & {\rm Hom}_{gr R}(N_{t}, L_4) \ar[dll] \\
& & {\rm Hom}_{gr R}(N_{t}, k)
}
\]
induced by the canonical quotient $L_i \twoheadrightarrow k$. 

Present $L_i = k[X]/I(p_i) = R/(l_i, l_{i+1})$ as a quotient of two linear forms as in section \ref{complexitytwo}. We will show that ${\rm dim}_k {\rm Hom}_{gr R}(N_{t}, L_i) = 1$ with generators given by
\[
\xymatrix@C2.8pc{ R(-1)^{\oplus 2} \ar[r]^{\Phi_{t}^{+}} \ar[d]^{\begin{bsmallmatrix} 0 & 1\\ -t_0 & 0 \end{bsmallmatrix}} & R^{\oplus 2} \ar[d]^{\begin{bsmallmatrix} 0 & 1 \end{bsmallmatrix}} \ar@{->>}[r] & N_{t} \ar@{.>}[d] && R(-1)^{\oplus 2} \ar[r]^{\Phi_{t}^{+}} \ar[d]^{\begin{bsmallmatrix} t_1 & 1\\ -t_0 & -1 \end{bsmallmatrix}} & R^{\oplus 2} \ar[d]^{\begin{bsmallmatrix} 1 & 1 \end{bsmallmatrix}} \ar@{->>}[r] & N_{t} \ar@{.>}[d]\\
R(-1)^{\oplus 2} \ar[r]_{\begin{bsmallmatrix} x-y & y-z \end{bsmallmatrix}} & R \ar@{->>}[r] & L_1 && R(-1)^{\oplus 2} \ar[r]_{\begin{bsmallmatrix} x+y & y-z \end{bsmallmatrix}} & R \ar@{->>}[r] & L_2 \\
}
\]

\[
\xymatrix@C2.8pc{ R(-1)^{\oplus 2} \ar[r]^{\Phi_{t}^{+}} \ar[d]^{\begin{bsmallmatrix} t_1 & 1\\ 0 & 0 \end{bsmallmatrix}} & R^{\oplus 2} \ar[d]^{\begin{bsmallmatrix} 1 & 0 \end{bsmallmatrix}} \ar@{->>}[r] & N_{t} \ar@{.>}[d] && R(-1)^{\oplus 2} \ar[r]^{\Phi_{t}^{+}} \ar[d]^{\begin{bsmallmatrix} t_0t_1 & t_1 \\ t_0t_1 & t_0 \end{bsmallmatrix}} & R^{\oplus 2} \ar[d]^{\begin{bsmallmatrix} t_0 & t_1 \end{bsmallmatrix}} \ar@{->>}[r] & N_{t} \ar@{.>}[d]\\
R(-1)^{\oplus 2} \ar[r]_{\begin{bsmallmatrix} x+y & y+z \end{bsmallmatrix}} & R \ar@{->>}[r] & L_3 && R(-1)^{\oplus 2} \ar[r]_{\begin{bsmallmatrix} x-y & y+z \end{bsmallmatrix}} & R \ar@{->>}[r] & L_4 \\
}
\]
where we recall that $\Phi_{t}^{+} = \begin{bsmallmatrix} t_1 (x+y) & y+z \\
										 t_0(z-y) & x-y
										 \end{bsmallmatrix}$. 
The above chain-maps are homotopically non-trivial for degree reasons, hence \[
d_i := {\rm dim}_k {\rm Hom}_{gr R}(N_t, L_i) \geq 1.
\]
Note that $d_0 := {\rm dim}_k {\rm Hom}_{gr R}(N_{t}, k) = 2$ with the obvious basis. Now, one could verify by hand that the above morphisms are the only possible ones. Alternatively, note that $N_t$ must be sent to a regular module since it has complexity one. Let
\[
\partial_A: K_0(A) \to \Z
\]
be the defect of the algebra $A = kQ$, which takes a module $M$ with dimension vector $(d_0, d_1, d_2, d_3, d_4)$ to 
\[
-2d_0 + \sum_{i=1}^4 d_i.
\]
By \cite[Chp XI.2, Prop 2.3]{MR2360503} and \cite[Chp XIII.3 Lemma 3.4]{MR2360503}, for $M$ indecomposable one has $\partial_A(M) = 0$ if and only if $M$ is regular. It follows that $\partial_A({\rm E}(N_t)) = 0$ and so $d_1 = \cdots = d_4 = 1$.

Finally since ${\rm dim}_k {\rm Hom}_{gr R}(N_t, L_i) = 1$ with the above maps as basis, composing these with the canonical quotient $L_i \twoheadrightarrow k$ shows that ${\rm E}(N_{t}) = R_{t}$.

Establishing $ii.$ is easier. As before we reduce to computing the module structure on ${\rm Hom}_{gr R}(D_{t}^{+}, U)$. We of course have ${\rm dim}_k {\rm Hom}_{gr R}(D_t^{+}, k) = 1$ via the canonical quotient $D_t^{+} \twoheadrightarrow k$ and one easily verifies that the remaining ${\rm Hom}$ spaces are $0$ or $1$ dimensional with bases given by
\[
\xymatrix@C1pc{ R(-1) \ar@/^0.1pc/[r]^{x-y} \ar[d]^{\begin{bsmallmatrix}  1 \\ 0 \end{bsmallmatrix}} & R \ar[d]^{\begin{bsmallmatrix} 1 \end{bsmallmatrix}} \ar@{->>}[r] & D_0^+ \ar@{.>}[d] && R(-1) \ar@/^0.1pc/[r]^{x-y} \ar[d]^{\begin{bsmallmatrix}  0 \\ 0  \end{bsmallmatrix}} & R \ar[d]^{\begin{bsmallmatrix} 0 \end{bsmallmatrix}} \ar@{->>}[r] & D_0^+ \ar@{.>}[d] && R(-1) \ar@/^0.1pc/[r]^{x-y} \ar[d]^{\begin{bsmallmatrix}  0 \\ 0  \end{bsmallmatrix}} & R \ar[d]^{\begin{bsmallmatrix} 0 \end{bsmallmatrix}} \ar@{->>}[r] & D_0^+ \ar@{.>}[d] && R(-1) \ar@/^0.1pc/[r]^{x-y} \ar[d]^{\begin{bsmallmatrix} 1 \\ 0  \end{bsmallmatrix}} & R \ar[d]^{\begin{bsmallmatrix} 1 \end{bsmallmatrix}} \ar@{->>}[r] & D_0^+ \ar@{.>}[d]\\
R(-1)^{\oplus 2} \ar@/_0.1pc/[r]_{\begin{bsmallmatrix} x-y & y-z \end{bsmallmatrix}} & R \ar@{->>}[r] & L_1 && R(-1)^{\oplus 2} \ar@/_0.1pc/[r]_{\begin{bsmallmatrix} x+y & y-z \end{bsmallmatrix}} & R \ar@{->>}[r] & L_2 && R(-1)^{\oplus 2} \ar@/_0.1pc/[r]_{\begin{bsmallmatrix} x+y & y+z \end{bsmallmatrix}} & R \ar@{->>}[r] & L_3 && R(-1)^{\oplus 2} \ar@/_0.1pc/[r]_{\begin{bsmallmatrix} x-y & y+z \end{bsmallmatrix}} & R \ar@{->>}[r] & L_4 \\
}
\]
\[
\xymatrix@C1pc{ R(-1) \ar@/^0.1pc/[r]^{x+z} \ar[d]^{\begin{bsmallmatrix}  0 \\ 0 \end{bsmallmatrix}} & R \ar[d]^{\begin{bsmallmatrix} 0 \end{bsmallmatrix}} \ar@{->>}[r] & D_1^+ \ar@{.>}[d] && R(-1) \ar@/^0.1pc/[r]^{x+z} \ar[d]^{\begin{bsmallmatrix}  1 \\ -1  \end{bsmallmatrix}} & R \ar[d]^{\begin{bsmallmatrix} 1 \end{bsmallmatrix}} \ar@{->>}[r] & D_1^+ \ar@{.>}[d] && R(-1) \ar@/^0.1pc/[r]^{x+z} \ar[d]^{\begin{bsmallmatrix}  0 \\ 0  \end{bsmallmatrix}} & R \ar[d]^{\begin{bsmallmatrix} 0 \end{bsmallmatrix}} \ar@{->>}[r] & D_1^+ \ar@{.>}[d] && R(-1) \ar@/^0.1pc/[r]^{x+z} \ar[d]^{\begin{bsmallmatrix} 1 \\ 1 \end{bsmallmatrix}} & R \ar[d]^{\begin{bsmallmatrix} 1 \end{bsmallmatrix}} \ar@{->>}[r] & D_1^+ \ar@{.>}[d]\\
R(-1)^{\oplus 2} \ar@/_0.1pc/[r]_{\begin{bsmallmatrix} x-y & y-z \end{bsmallmatrix}} & R \ar@{->>}[r] & L_1 && R(-1)^{\oplus 2} \ar@/_0.1pc/[r]_{\begin{bsmallmatrix} x+y & y-z \end{bsmallmatrix}} & R \ar@{->>}[r] & L_2 && R(-1)^{\oplus 2} \ar@/_0.1pc/[r]_{\begin{bsmallmatrix} x+y & y+z \end{bsmallmatrix}} & R \ar@{->>}[r] & L_3 && R(-1)^{\oplus 2} \ar@/_0.1pc/[r]_{\begin{bsmallmatrix} x-y & y+z \end{bsmallmatrix}} & R \ar@{->>}[r] & L_4 \\
}
\]
\[
\xymatrix@C1pc{ R(-1) \ar@/^0.1pc/[r]^{y+z} \ar[d]^{\begin{bsmallmatrix}  0 \\ 0 \end{bsmallmatrix}} & R \ar[d]^{\begin{bsmallmatrix} 0 \end{bsmallmatrix}} \ar@{->>}[r] & D_\infty^+ \ar@{.>}[d] && R(-1) \ar@/^0.1pc/[r]^{y+z} \ar[d]^{\begin{bsmallmatrix}  0 \\ 0  \end{bsmallmatrix}} & R \ar[d]^{\begin{bsmallmatrix} 0 \end{bsmallmatrix}} \ar@{->>}[r] & D_\infty^+ \ar@{.>}[d] && R(-1) \ar@/^0.1pc/[r]^{y+z} \ar[d]^{\begin{bsmallmatrix}  0 \\ 1  \end{bsmallmatrix}} & R \ar[d]^{\begin{bsmallmatrix} 1 \end{bsmallmatrix}} \ar@{->>}[r] & D_\infty^+ \ar@{.>}[d] && R(-1) \ar@/^0.1pc/[r]^{y+z} \ar[d]^{\begin{bsmallmatrix} 0 \\ 1  \end{bsmallmatrix}} & R \ar[d]^{\begin{bsmallmatrix} 1 \end{bsmallmatrix}} \ar@{->>}[r] & D_\infty^+ \ar@{.>}[d]\\
R(-1)^{\oplus 2} \ar@/_0.1pc/[r]_{\begin{bsmallmatrix} x-y & y-z \end{bsmallmatrix}} & R \ar@{->>}[r] & L_1 && R(-1)^{\oplus 2} \ar@/_0.1pc/[r]_{\begin{bsmallmatrix} x+y & y-z \end{bsmallmatrix}} & R \ar@{->>}[r] & L_2 && R(-1)^{\oplus 2} \ar@/_0.1pc/[r]_{\begin{bsmallmatrix} x+y & y+z \end{bsmallmatrix}} & R \ar@{->>}[r] & L_3 && R(-1)^{\oplus 2} \ar@/_0.1pc/[r]_{\begin{bsmallmatrix} x-y & y+z \end{bsmallmatrix}} & R \ar@{->>}[r] & L_4 \\
}
\]

In this form one immediately sees that ${\rm E}(D_t^{+}) = {\rm Hom}_{gr R}(D_t^{+}, U) = S_t^{+}$, and ${\rm E}(D_t^{-}) \cong S_t^{-}$ follows from Prop \ref{S_t}, \ref{D_t} by using $\tau$.
\end{proof}

As corollary we obtain Prop \ref{N_t} through Theorem \ref{ARquiverMCM} from Prop \ref{R_t} through Theorem \ref{ARquiverQ}, since the representations $R_{t}$ for $t \in \mathbb{P}^1 \setminus \{0, 1, \infty \}$ and $S_{t}^{\pm}$ are the only simple regular representations of $Q$ and every module is constructed from them through unique, canonical extensions.
As we have already described said extensions in $\underline{{\rm MCM}}(gr R)$ in section $3$, our list of modules is exhaustive and we have classified every indecomposable graded MCM $R$-module.


\section{Relations with the preprojective algebra of type $\widetilde{D}_4$}

The preprojective algebra $\Pi(Q)$ of type $\widetilde{D}_4$ is given by the graded quiver path algebra $k\overline{Q}/I$ where $\overline{Q}$ is
\[
\xymatrix@C1.8pc@R1.8pc{1 \ar@/^0.3pc/[dr]^{a_1} && 2 \ar@/^0.3pc/[dl]^{a_2}\\
            & 0 \ar@/^0.3pc/[ul]^{a_1^*} \ar@/^0.3pc/[ur]^{a_2^*} \ar@/^0.3pc/[dl]^{a_3^*} \ar@/^0.3pc/[dr]^{a_4^*} &\\
            3 \ar@/^0.3pc/[ur]^{a_3} && 4 \ar@/^0.3pc/[ul]^{a_4}
}
\]
with $|a_i| = 0$, $|a_i^*| = 1$ and $I = \big(\sum_{i = 1}^4 [a_i, a_i^*]\big)$, where $[-, -]$ is the commutator. One may describe $\Pi(Q)$ more invariantly as follows:
\[
\Pi(Q) \cong \bigoplus_{i \geq 0} {\rm Hom}_{{\rm D}^b(A)}(A, \tau^{-i}A).
\]
The latter is the orbit algebra of $A$ under the endofunctor $\tau^{-1}$, with multiplication given by the natural composition. This description easily transfers under the equivalence ${\rm E}: \underline{{\rm MCM}}(gr R)^{op} \xrightarrow{\cong} {\rm D}^b(A)$, noting that $\tau = (1)[-1]$ in $\underline{{\rm MCM}}(gr R)$ and that ${\rm E} \circ \tau = \tau^{-1} \circ {\rm E}$.
\begin{thm}\label{preproj} Let $U = \bigg( \bigoplus_{i = 1}^4 L_i \bigg) \oplus k$. There is an isomorphism of graded algebras $\Pi(Q) \cong {\rm Ext}_R^{{\rm diag}}(U, U)$. 
\end{thm}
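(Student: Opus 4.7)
The plan is to transport the orbit-algebra description
\[
\Pi(Q) \cong \bigoplus_{i \geq 0} {\rm Hom}_{{\rm D}^b(A)}(A, \tau_A^{-i} A),
\]
with multiplication given by orbit composition $(f, g) \mapsto f \circ \tau_A^{-i}(g)$, across the equivalence ${\rm E}$ of Theorem \ref{E}. Since ${\rm E}$ is a contravariant triangle equivalence it intertwines Serre functors, and therefore sends $\tau_A^{-1}$ to $\tau_R$ while carrying $A = {\rm E}(T)$ back to $T = U^{st}$. One therefore obtains a graded isomorphism
\[
\Pi(Q) \cong \bigoplus_{i \geq 0} \underline{{\rm Hom}}_{gr R}(\tau_R^i T, T).
\]

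The next step is to rewrite each summand using $\tau_R = (1)[-1]$, hence $\tau_R^i T = T(i)[-i]$, giving
\[
\underline{{\rm Hom}}_{gr R}(\tau_R^i T, T) \;=\; \underline{{\rm Hom}}_{gr R}(T(i), T[i]) \;\cong\; \widehat{{\rm Ext}}^i_{gr R}(T, T(-i)).
\]
Because $d = \dim R = 1$, the stabilization map ${\rm Ext}^i_{gr R}(U, U(-i)) \to \widehat{{\rm Ext}}^i_{gr R}(T, T(-i))$ is bijective for every $i \geq 1$ by the criterion recalled at the end of Section 2 (which applies to each summand of $U$: the $L_i$ are MCM, while $k$ contributes because $i \geq d$). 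For $i = 0$, both sides are the ordinary endomorphism ring of $U$, identified with $kQ$ in the proof of Theorem \ref{tiltingobject}. Assembling the summands yields a graded vector-space isomorphism with $\bigoplus_{i \geq 0} {\rm Ext}^i_{gr R}(U, U(-i))$; by the Betti tables of Proposition \ref{betti} the module $U$ is Koszul, so this is by definition ${\rm Ext}^{{\rm diag}}_R(U, U)$.

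What remains, and what I expect to be the main technical point, is to verify that the above identification respects multiplication. On the stable side, the orbit composition of $f \in \underline{{\rm Hom}}(\tau_R^i T, T)$ and $g \in \underline{{\rm Hom}}(\tau_R^j T, T)$ is $f \circ \tau_R^i(g): \tau_R^{i+j} T \to T$; after unpacking the shift identifications of the previous paragraph this is exactly the Yoneda product in $\widehat{{\rm Ext}}^{i+j}_{gr R}(T, T(-i-j))$, since the autoequivalence $\tau_R^i = (i)[-i]$ acts on $\underline{{\rm Hom}}$ precisely by the internal and cohomological shifts paired in the Yoneda composition. Yoneda products commute with stabilization, so this matches the Yoneda product on ${\rm Ext}^{{\rm diag}}_R(U, U)$. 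The delicate step is the sign and shift bookkeeping: one must confirm that the $(i)[-i]$ twist threads through both the internal and cohomological gradings compatibly with the Koszul bigrading. This compatibility is ultimately formal, as both products arise from composition in $\underline{{\rm MCM}}(gr R)$, but checking it carefully is where the proof will require the most attention.
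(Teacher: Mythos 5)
Your proposal follows essentially the same route as the paper: the orbit-algebra description $\Pi(Q) \cong \bigoplus_{i \geq 0}{\rm Hom}_{{\rm D}^b(A)}(A, \tau^{-i}A)$ transported through ${\rm E}$ using $\tau = (1)[-1]$, followed by the stabilization isomorphisms ${\rm Ext}^i_{gr R}(U, U(-i)) \cong \underline{{\rm Hom}}_{gr R}(T, T(-i)[i])$ (degree $0$ via the tilting/Orlov identification, degrees $i \geq 1$ via $\dim R = 1$). The paper treats the compatibility of products as implicit, exactly as you indicate it is formal; note only that ${\rm Ext}^{\rm diag}_R(U,U)$ is defined as the diagonal of the bigraded Ext algebra regardless of Koszulity, so that remark is not needed for the identification.
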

\begin{proof}

We have an isomorphism of graded algebras
\begin{align*}
\bigoplus_{i \geq 0} \underline{{\rm Hom}}_{gr R}(T, \tau^{-i}T) & \cong \bigoplus_{i \geq 0} {\rm Hom}_{{\rm D}^b(A)}(\tau^iA, A) \\
& \cong \bigoplus_{i \geq 0} {\rm Hom}_{{\rm D}^b(A)}(A, \tau^{-i}A)\\
& = \Pi(Q).
\end{align*}
Now, consider $\bigoplus_{i \geq 0} {\rm Ext}^i_{gr R}(U, U(-i))$. By Orlov's Theorem the map ${\rm st}: {\rm Ext}^0_{gr R}(U, U) \xrightarrow{\cong} \underline{{\rm Hom}}_{gr R}(T, T)$ is bijective. For $i \geq 1$, since ${\rm dim}\ R = 1$ the map ${\rm st}: {\rm Ext}_{gr R}^{i}(U, U(-i)) \xrightarrow{\cong} \underline{{\rm Hom}}_{gr R}(T, T(-i)[i])$ is also bijective, and so
\begin{align*}
\bigoplus_{i \geq 0} \underline{{\rm Hom}}_{gr R}(T, \tau^{-i}T) & \cong \bigoplus_{i \geq 0} {\rm Ext}^{i, -i}_R(U, U) \\
& = {\rm Ext}^{{\rm diag}}_R(U, U)
\end{align*}
and the result follows.
\end{proof}
Note that the module $U$ is Koszul as alluded to in the abstract, in that it admits a linear resolution with generators in the same degrees. Theorem \ref{preproj} allows us to relate the Yoneda algebras of $L_i$ and $k$ to the preprojective algebra.
\begin{cor} Let $e_i \in \Pi(Q)$ be the primitive idempotent corresponding to the $i^{th}$ vertex. Then we have isomorphisms of graded algebras
\[
    e_i\Pi(Q)e_i \cong \begin{cases} {\rm Ext}^*_R(k, k) & i = 0 \\
     {\rm Ext}^{{\rm diag}}_R(L_i, L_i) & i = 1,2,3,4.\end{cases}
\]
\end{cor}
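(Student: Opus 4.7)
The plan is to combine Theorem \ref{preproj} with the block decomposition $U = k \oplus L_1 \oplus L_2 \oplus L_3 \oplus L_4$. Setting $U_0 = k$ and $U_i = L_i$ for $i = 1, 2, 3, 4$, the summand projections $\epsilon_i \in \underline{{\rm End}}_{gr R}(U)$ form a complete set of primitive orthogonal idempotents sitting inside ${\rm Ext}^{\rm diag}_R(U, U)$ in cohomological degree zero. First I would verify that under the isomorphism of Theorem \ref{preproj} these match the vertex idempotents $e_i$ of $\Pi(Q)$: both arise as the identity restricted to an indecomposable summand in a block decomposition, and the matching $k^{st} \leftrightarrow P(0)$, $L_i \leftrightarrow P(i)$ produced in the proof of Theorem \ref{tiltingobject} makes this identification canonical. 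Cutting down by $e_i$ on both sides then yields
\[
e_i \, \Pi(Q) \, e_i \;\cong\; {\rm Ext}^{\rm diag}_R(U_i, U_i).
\]

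For $i = 1, 2, 3, 4$ the right-hand side is already ${\rm Ext}^{\rm diag}_R(L_i, L_i)$ by definition, giving one half of the claim immediately. The remaining task is to upgrade ${\rm Ext}^{\rm diag}_R(k, k)$ to the full Yoneda algebra ${\rm Ext}^*_R(k, k)$ in the case $i = 0$. This is equivalent to the vanishing ${\rm Ext}^{i, j}_R(k, k) = 0$ for $j \neq -i$, i.e. to the statement that $R$ is a Koszul ring. The main (and essentially only) obstacle is invoking this Koszul property, but it holds on general grounds: $R = k[x, y, z]/(Q_0, Q_\infty)$ is a homogeneous quadratic complete intersection, and such rings are classically Koszul (as alluded to in the discussion after Prop \ref{betti}), so the minimal free resolution of $k$ over $R$ is linear and ${\rm Ext}^*_R(k, k) = {\rm Ext}^{\rm diag}_R(k, k)$. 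Combined with the previous step this yields the corollary.
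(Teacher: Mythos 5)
Your proposal is correct and follows essentially the same route as the paper: the corollary is obtained by cutting the isomorphism $\Pi(Q) \cong {\rm Ext}^{\rm diag}_R(U,U)$ of Theorem \ref{preproj} by the idempotents attached to the summands of $U = \big(\bigoplus_{i=1}^4 L_i\big) \oplus k$ (matched with the vertex idempotents via $k^{st} \leftrightarrow P(0)$, $L_i \leftrightarrow P(i)$), and then invoking Koszulness of the quadratic complete intersection $R$ to identify ${\rm Ext}^{\rm diag}_R(k,k)$ with ${\rm Ext}^*_R(k,k)$, exactly as the paper remarks after the statement.
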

We have used that $R$ is Koszul, and so ${\rm Ext}_R^{{\rm diag}}(k, k) = {\rm Ext}_R^*(k, k)$. This has interesting consequences:

\begin{cor} The graded algebra $e_0 \Pi(Q) e_0$ is Koszul and ${\rm Ext}_{e_0 \Pi(Q) e_0}^*(k, k) \cong R$ as graded algebras. Moreover $e_0 \Pi(Q) e_0 \cong \mathcal{U}(\mathcal{L})$ is the universal enveloping algebra of the homotopy Lie algebra $\mathcal{L} = \pi(R)$.
\end{cor}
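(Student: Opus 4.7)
The plan is to derive all three assertions from the isomorphism $e_0\Pi(Q)e_0 \cong {\rm Ext}^*_R(k,k)$ established in the preceding corollary, combined with Koszulity of $R$ and classical structure results on the Yoneda Ext-algebra of a commutative graded ring.

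First I would observe that $R = k[x,y,z]/(Q_0, Q_\infty)$ is a quadratic complete intersection in a polynomial ring, hence Koszul; equivalently, the minimal resolution of $k$ over $R$ is linear, a fact that is already implicit in the identification ${\rm Ext}^*_R(k,k) = {\rm Ext}^{{\rm diag}}_R(k,k)$ used in the proof of Theorem \ref{preproj}. Next, by Koszul duality, whenever $R$ is a Koszul algebra its Yoneda algebra ${\rm Ext}^*_R(k,k)$ is again Koszul and its own Yoneda algebra recovers $R$ as a graded algebra. Chaining these two facts with the preceding corollary immediately yields both the Koszulity of $e_0\Pi(Q)e_0$ and the isomorphism ${\rm Ext}^*_{e_0\Pi(Q)e_0}(k,k) \cong R$.

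For the last assertion I would appeal to the canonical graded-cocommutative Hopf algebra structure on the Yoneda algebra $E := {\rm Ext}^*_R(k,k)$ of any commutative graded connected noetherian $k$-algebra; the coproduct is induced on the bar complex by the commutative multiplication of $R$. Under the standing assumption ${\rm char}\ k \neq 2$, a graded version of the Milnor-Moore structure theorem then identifies any such Hopf algebra with the (restricted, in positive characteristic) universal enveloping algebra of its graded Lie algebra of primitives, which by definition is the homotopy Lie algebra $\pi(R) = \mathcal{L}$. Composing with $e_0\Pi(Q)e_0 \cong {\rm Ext}^*_R(k,k)$ gives $e_0\Pi(Q)e_0 \cong \mathcal{U}(\mathcal{L})$.

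The main obstacle, such as it is, lies not in any of the individual steps but in confirming that the Hopf and Lie-theoretic structures are transferred cleanly across the chain of isomorphisms: one must ensure that the Hopf structure on $E$, which is intrinsic to $R$, is detected by the identification of the preceding corollary, which was built abstractly from tilting theory via orbit algebras of $\tau^{-1}$. Since all the identifications used are natural in $R$ and pass through Hom-space structures, this is a compatibility check rather than a substantive computation.
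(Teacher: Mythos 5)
Your argument is essentially the paper's own: the paper disposes of this corollary by citing exactly the facts you spell out, namely Koszulity of the quadratic complete intersection $R$, the double-dual property of Koszul duality applied to ${\rm Ext}^*_R(k,k)\cong e_0\Pi(Q)e_0$, and the classical identification (Milnor--Moore in characteristic zero, Andr\'e--Sj\"odin in general) of the Yoneda algebra of a commutative graded connected $k$-algebra with $\mathcal{U}(\pi(R))$. Your closing worry about transporting the Hopf structure through the tilting-theoretic identification is unnecessary: all three assertions are statements about the graded algebra ${\rm Ext}^*_R(k,k)$ itself, so the graded algebra isomorphism from the preceding corollary suffices to transfer them to $e_0\Pi(Q)e_0$.
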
 
These are well-known consequences of Koszul duality for associative (resp. commutative and lie) algebras. See \cite{MR1648664}, \cite{BGS} for the definitions and results in this context.

\begin{cor} Assume that $k$ has characteristic zero. Then ${\rm Ext}_R^{{\rm diag}}(L_i, L_i) \cong k[u,v]^{\Gamma}$ is a Kleinian singularity of type $D_4$, with $\Gamma \le SL_2(k)$ the corresponding finite subgroup.
\end{cor}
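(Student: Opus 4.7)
The plan is to invoke the classical identification of idempotent subalgebras of affine ADE preprojective algebras with coordinate rings of Kleinian singularities, due to Reiten--Van den Bergh and generalised by Crawley-Boevey, and combine it with the previous corollary identifying $e_i \Pi(Q) e_i \cong {\rm Ext}_R^{{\rm diag}}(L_i, L_i)$.

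First I would recall the McKay correspondence in characteristic zero: the $\widetilde{D}_4$ Dynkin diagram corresponds to the binary dihedral (quaternion) group $\Gamma = Q_8 \le SL_2(k)$ of order $8$, whose invariant ring $k[u,v]^{\Gamma}$ is by definition the Kleinian singularity of type $D_4$. The key input is then the theorem that for $\Pi(\widetilde{\Delta})$ the preprojective algebra of an affine ADE quiver and $e$ the idempotent at an extending vertex, there is an isomorphism of graded algebras
\[
e \, \Pi(\widetilde{\Delta}) \, e \;\cong\; k[u,v]^{\Gamma},
\]
where $\Gamma \le SL_2(k)$ is the finite subgroup matched to $\widetilde{\Delta}$ via McKay duality.

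In our setting, $Q$ has underlying graph $\widetilde{D}_4$ realised as a star with central vertex $0$ of valence $4$ and four leaves $1, 2, 3, 4$, each of which plays the role of an extending vertex for the subdiagram $D_4$. Applying the above theorem at each $e_i$ therefore yields $e_i \Pi(Q) e_i \cong k[u,v]^{\Gamma}$ as graded algebras for $i = 1, 2, 3, 4$, and combining with the previous corollary produces the desired identification ${\rm Ext}_R^{{\rm diag}}(L_i, L_i) \cong k[u,v]^{\Gamma}$.

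The main subtlety is matching grading conventions: the internal grading on $\Pi(Q)$ with $|a_j|=0$ and $|a_j^*|=1$ must correspond to the standard grading on $k[u,v]^{\Gamma}$ induced by $|u|=|v|=1$. This is built into the Reiten--Van den Bergh / Crawley-Boevey isomorphism at the extending vertex, and the characteristic zero hypothesis enters through the McKay correspondence and the classical presentation of the Kleinian singularity. No further geometric input specific to the conic pencil is needed — the statement is a formal consequence of the preprojective algebra identification of the previous corollary together with this well-known piece of Kleinian singularity theory.
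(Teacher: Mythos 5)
Your proposal is correct and follows essentially the same route as the paper: the paper deduces the corollary by combining the previous identification $e_i\Pi(Q)e_i \cong {\rm Ext}_R^{{\rm diag}}(L_i, L_i)$ with the theorem (cited there from Crawley-Boevey--Holland) that $e\,\Pi(Q)\,e \cong k[u,v]^{\Gamma}$ at an extending vertex of an affine ADE quiver, the leaves $1,2,3,4$ of $\widetilde{D}_4$ being precisely such vertices, with $\Gamma$ the binary dihedral group attached to $D_4$ under the McKay correspondence.
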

Indeed, Crawley-Boevey and Holland have shown in \cite{MR1620538} that if $Q$ is an extended ADE graph with extended vertex $e$, then $e \Pi(Q) e \cong k[u,v]^{\Gamma}$ for $\Gamma \le SL_2(k)$ a finite subgroup of same ADE type under the McKay correspondence.

\bibliographystyle{plain}
\bibliography{conicsbib}

\end{document}